\theoremstyle{plain}
\let\originalleft\left \renewcommand\left{\mathopen{}\mathclose\bgroup\originalleft}
\let\originalright\right \renewcommand\right{\aftergroup\egroup\originalright}
\newtheorem{lemma}{Lemma}[section]
\newtheorem{theorem}[lemma]{Theorem}
\newtheorem{exam}[lemma]{\normalfont \scshape
 Example}
\newtheorem{rem}[lemma]{\normalfont \scshape Remark}
\newcommand{\T}{{\mathpalette\raiseT\intercal}}
\newcommand{\raiseT}[2]{\mspace{-1mu}\raisebox{0.25ex}{$#1#2$}}
\newcommand{\R}{\mathbb{R}}
\newcommand{\N}{\mathbb{N}}
\newcommand{\norm}[1]{\left\Vert#1\right\Vert}
\newcommand{\abs}[1]{\left\vert#1\right\vert}
\newcommand{\set}[1]{\left\{#1\right\}}
\newcommand{\eps}{\varepsilon}
\newcommand{\bfx}{\bm{x}}
\newcommand{\bfzero}{\bm{0}}
\newcommand{\bfinfty}{\bm{\infty}}
\newcommand{\bfone}{\bm{1}}
\newcommand{\bfE}{\bm{E}}
\newcommand{\bfI}{\bm{I}}
\newcommand{\bfM}{\bm{M}}
\newcommand{\bfP}{\bm{P}}
\newcommand{\bfr}{\bm{r}}
\newcommand{\bfs}{\bm{s}}
\newcommand{\bft}{\bm{t}}
\newcommand{\bfU}{\bm{U}}
\newcommand{\bfu}{\bm{u}}
\newcommand{\bfV}{\bm{V}}
\newcommand{\bfX}{\bm{X}}
\newcommand{\bfY}{\bm{Y}}
\newcommand{\bfy}{\bm{y}}
\newcommand{\bfZ}{\bm{Z}}
\newcommand{\bfz}{\bm{z}}
\newcommand{\bfSigma}{\bm{\Sigma}}
\newcommand{\bfeta}{\bm{\eta}}
\newcommand{\bfgamma}{\bm{\gamma}}
\newcommand{\bfxi}{\bm{\xi}}
\newcommand{\neins}{\left\langle n\left(1-\frac{c}{j}\right)\right\rangle}
\newcommand{\neinsn}{\left\langle n\left(1-\frac{c_n}{j}\right)\right\rangle}
\begin{document}

\title[Generalized Pareto Copula]{Testing for a $\delta$-Neighborhood of a generalized Pareto Copula}%
\author{Stefan Aulbach and Michael Falk}
\address{$^1$University of W\"{u}rzburg,
Institute of Mathematics,  Emil-Fischer-Str. 30, 97074 W\"{u}rzburg.}
\email{stefan.aulbach@uni-wuerzburg.de,\newline michael.falk@uni-wuerzburg.de}

\thanks{The first author was supported by DFG grant FA 262/4-2}%
\subjclass[2010]{Primary 62G32, secondary 62H10, 62M99}%
\keywords{Multivariate max-domain of attraction, multivariate extreme value distribution, copula, $D$-norm, generalized Pareto copula,  chi-square goodness-of-fit test, max-stable processes, functional max-domain of attraction}%


\begin{abstract}
A multivariate distribution function $F$ is in the max-domain of attraction
of an extreme value distribution if and only if this is true for the copula
corresponding to $F$ and its univariate margins. \citet{aulbf11} have shown
that a copula satisfies the extreme value condition if and only if the
copula is tail equivalent to a generalized Pareto copula (GPC). In this
paper we propose a $\chi^2$-goodness-of-fit test in arbitrary dimension for
testing whether a copula is in a certain neighborhood of a GPC. The test
can be applied to stochastic processes as well to check whether the
corresponding copula process is close to a generalized Pareto process.
Since the $p$-value of the proposed test is highly sensitive to a proper
selection of a certain threshold, we also present a graphical tool that
makes the decision, whether or not to reject the hypothesis, more
comfortable.
\end{abstract}

\maketitle

\section{Introduction}

Consider a random vector (rv) $\bfU=(U_1,\dots,U_d)^\T$ whose distribution
function (df) is a copula $C$, i.e., each $U_i$ follows the uniform
distribution on $(0,1)$. The copula $C$ is said to be in  the max-domain of attraction of an extreme value
df (EVD) $G$ on $\R^d$, denoted by $C\in\mathcal D(G)$, if
\begin{equation}\label{def:domain_of_attraction}
C^n\left(\bfone + \frac 1n \bfx\right)\to_{n\to\infty}G(\bfx),\qquad \bfx\le\bfzero\in\R^d.
\end{equation}
The characteristic property of the df $G$ is its \emph{max-stability}, precisely,
\[
G^n\left(\frac{\bfx}n\right) = G(\bfx),\qquad \bfx\in\R^d,\,n\in\N;
\]
see, e.g., \citet[Section 4]{fahure10}.

Let $\bfU^{(1)},\dots, \bfU^{(n)}$ be independent copies of $\bfU$. Equation
\eqref{def:domain_of_attraction} is equivalent with
\[
P\left(n\left(\max_{1\le i\le n}\bfU^{(i)}-\bfone\right)\le \bfx\right) \to_{n\to\infty}G(\bfx),\qquad \bfx\le\bfzero,
\]
where $\bfzero:=(0,\dots,0)^\T\in\R^d$ and $\bfone:=(1,\dots,1)^\T\in\R^d$.
All operations on vectors such as $\max_{1\le i\le n}\bfU^{(i)}$ are meant
componentwise.

From \citet[Corollary 2.2]{aulbf11} we know that $C\in\mathcal D(G)$ if and
only if (iff) there exists a norm $\norm\cdot$ on $\R^d$ such that the copula $C$
satisfies the expansion
\begin{equation}\label{eqn:expansion_of_copula}
C(\bfu)=1-\norm{\bfu-\bfone} + o\left(\norm{\bfu-\bfone}\right)
\end{equation}
uniformly for $\bfu\in[0,1]^d$ as $\bfu\uparrow\bfone$, i.e.,
\begin{equation*}
  \lim_{t\downarrow0} \sup_{\substack{\bfu\in[0,1]^d\setminus\set{\bfone}\\ \norm{\bfu-\bfone}<t}} \frac{\abs{C(\bfu)-\bigl(1-\norm{\bfu-\bfone}\bigr)}}{\norm{\bfu-\bfone}} = 0.
\end{equation*}
In this case the norm $\norm\cdot$ is called a \emph{$D$-norm} and is
commonly denoted by $\norm\cdot_D$, where the additional character $D$ means dependence. The corresponding EVD $G$ is then given
by
\[
G(\bfx)=\exp\left(-\norm{\bfx}_D\right),\qquad \bfx\le\bfzero\in\R^d;
\]
we refer to \citet[Section 5.2]{fahure10} for further details. We have in particular independence of the margins of $G$ iff the $D$-norm $\norm\cdot_D$ is the usual $L_1$-norm $\norm\cdot_1$, and we have complete dependence iff $\norm\cdot_D$ is the maximum-norm $\norm\cdot_\infty$.

If the copula $C$ satisfies
\begin{equation*} 
  C(\bfu) = 1 - \norm{\bfu-\bfone}_D, \qquad \bfu_0\le\bfu\le\bfone,
\end{equation*}
for some $\bfu_0 \in [0,1)^d$, then we refer to it as a \emph{generalized
Pareto copula} (GPC). The characteristic property of a GPC is its
\emph{excursion stability}: The rv $\bfU=(U_1,\dots,U_d)^\T$ follows a GPC
iff there exists $\bfu_0 \in [0,1)^d$ such that
\begin{equation*}\label{eqn:excursion_stability}
P(U_k - 1 > t(u_k - 1),\,k\in K)=t P(U_k>u_k,\,k\in K),\qquad t\in [0,1],
\end{equation*}
for all $\bfu\ge\bfu_0$ and each nonempty subset $K$ of $\set{1,\dots,d}$,
see \citet[Proposition 5.3.4]{fahure10}. Based on this characterization,
\citet{falm09} investigated a test whether $\bfU$ follows a GPC; see also
\citet[Section 5.8]{fahure10}.

It is by no means obvious to find a copula $C$, which does \emph{not} satisfy
$C\in\mathcal D(G)$. An example is given in \citet{kortalb09}. A family of
copulas, which are not in the max-domain of attraction of an EVD but come
arbitrary close to a GPC, is given in Section
\ref{sec:copula_not_in_domain_of_attraction}. Parts of the simulations in
Section \ref{sec:simulations} are based on this family.

If the remainder term in equation \eqref{eqn:expansion_of_copula} satisfies
\begin{equation*}
r(\bfu):=C(\bfu)-\bigl(1-\norm{\bfu-\bfone}_D\bigr)=O\bigl(\norm{\bfu-\bfone}_D^{1+\delta}\bigr)
\tag{$\delta$-n}\label{eqn:def_delta-neighborhood}
\end{equation*}
as $\bfu\uparrow \bfone$ for some $\delta>0$, then the copula $C$ is said to
be in the \emph{$\delta$-neighborhood} of a GPC. Note that
\eqref{eqn:expansion_of_copula} is already implied by
\eqref{eqn:def_delta-neighborhood} and that $O\bigl(\norm{\bfu-\bfone}_D^{1+\delta}\bigr)=O\bigl(\norm{\bfu-\bfone}^{1+\delta}\bigr)$ for an arbitrary norm $\norm\cdot$ on $\R^d$. The significance of such
$\delta$-neighborhoods is outlined in Section \ref{sec:delta-neighborhoods}
where we also give some prominent examples. In Section
\ref{sec:test_in_case_of_copula} we will derive a $\chi^2$-goodness-of-fit
test based on $\bfU^{(1)},\dots,\bfU^{(n)}$ which checks whether the
pertaining copula $C$ satisfies condition \eqref{eqn:def_delta-neighborhood}.

Let $\bfX=(X_1,\dots,X_d)^\T$ be a rv with arbitrary df $F$. It is well-known
\citep{deheu78,deheu83,gal87} that $F$ is in the max-domain of attraction of
an EVD iff this is true for the univariate margins of $F$ together with the
condition that the copula $C_F$ corresponding to $F$ satisfies
\eqref{def:domain_of_attraction}. While there are various tests which check
for the univariate extreme value condition --- see, e.g. \citet{diehh02},
\citet{drehl06} as well as \citet[Section 5.3]{reist07} --- much less has
been done for the multivariate case. Utilizing the \emph{empirical copula},
we can modify the test statistic from Section
\ref{sec:test_in_case_of_copula} and check, whether $C_F$ satisfies condition
\eqref{eqn:def_delta-neighborhood}, based on independent copies
$\bfX^{(1)},\dots,\bfX^{(d)}$. This is the content of Section
\ref{sec:arbitrary_random_vector}.

Sections \ref{sec:test_GPCP} and \ref{sec:test_processes_grid} carry the
results of Section \ref{sec:test_multivariate} over to function space. The
aim is to test whether the copula process of a given stochastic process in
$C[0,1]$ is in a $\delta$-neighborhood of a generalized Pareto copula process
(GPCP). While Section \ref{sec:test_GPCP} deals with copula processes or
general processes as a whole, respectively, Section
\ref{sec:test_processes_grid} considers the case that the underlying
processes are observed at a finite grid of points only.

In order to demonstrate the performance of our test, Section
\ref{sec:simulations} states the results of a simulation study. Since the
results from the previous sections highly depend on a proper choice of some
threshold, we also present a graphical tool that makes the decision, whether
or not to reject the hypothesis, more comfortable.

\section{A copula not in max-domain of attraction}\label{sec:copula_not_in_domain_of_attraction}

The following result provides a one parametric family of bivariate rv, which
are easy to simulate. Each member of this family has the property that its
corresponding copula does not satisfy the extreme value condition
\eqref{eqn:expansion_of_copula}. However, as the parameter tends to zero, the
copulas of interest come arbitrarily close to a GPC, which, in general, is in the
domain of attraction of an EVD.

\begin{lemma} \label{lem:copula_not_in_domain_of_attraction}
Let the rv $V$ have df
\[
H_\lambda(u):= u \bigl(1+\lambda\sin(\log(u))\bigr),\qquad 0\le u\le 1,
\]
where $\lambda\in\bigl[-\frac{\sqrt2}2,\frac{\sqrt2}2\bigr]$. Note that
$H_\lambda(0)=0$, $H_\lambda(1)=1$ and $H_\lambda'(u)\ge 0$ for $0<u<1$.
Furthermore let the rv $U$ be independent of $V$ and uniformly distributed on
$(0,1)$. Put $S_1:=U=:1-S_2$. Then the copula $C_\lambda$ corresponding to
the bivariate rv
\begin{equation} \label{eqn:copula_not_in_domain_of_attraction}
  \bfX:= -\frac V2\left(\frac 1{S_1},\frac 1{S_2}\right)^\T\in (-\infty,0]^2
\end{equation}
is not in the domain of attraction of a multivariate EVD if $\lambda\ne0$,
whereas $C_0$ is a GPC with corresponding $D$-norm
\begin{equation*}
  \norm{\bfx}_D = \norm{\bfx}_1 - \frac{\abs{x_1}\abs{x_2}}{\norm{\bfx}_1}
\end{equation*}
for $\bfx=(x_1,x_2)^\T\ne\bfzero$.
\end{lemma}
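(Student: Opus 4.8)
The plan is to compute the lower tail of $\bfX$ in closed form by conditioning on $U$, turn this into an expansion of $C_\lambda$ near $\bfone$, and read off both assertions. Since $X_i>-s_i\iff V<2s_iS_i$ and $S_1=U$, $S_2=1-U$, we have for $0<s_1\le\tfrac12$ and every $s_2>0$ (so that the argument of $H_\lambda$ never exceeds $2s_1\le1$)
\[
P(X_1>-s_1,X_2>-s_2)=E\bigl[H_\lambda\bigl(2\min(s_1U,s_2(1-U))\bigr)\bigr].
\]
The map $u\mapsto\min(s_1u,s_2(1-u))$ rises on $[0,u^\ast]$ and falls on $[u^\ast,1]$, with $u^\ast=s_2/(s_1+s_2)$, and at $u^\ast$ \emph{both} branches equal $w:=s_1s_2/(s_1+s_2)$. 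Splitting the expectation at $u^\ast$ and substituting $v=2s_1u$ resp.\ $v=2s_2(1-u)$, the two pieces merge into
\[
P(X_1>-s_1,X_2>-s_2)=\frac1{2w}\int_0^{2w}H_\lambda(v)\,dv=:\bar F(w),\qquad \bar F(s)=s\Bigl(1+\tfrac{2\lambda}{5}\bigl(2\sin\log(2s)-\cos\log(2s)\bigr)\Bigr),
\]
the explicit form following from $\int v\sin\log v\,dv=\tfrac15v^2(2\sin\log v-\cos\log v)$. The common margin is the same function, $P(X_i>-s)=E[H_\lambda(2sU)]=\int_0^1H_\lambda(2su)\,du=\bar F(s)$ for $s\le\tfrac12$. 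Using $\abs\lambda\le\tfrac{\sqrt2}2$ one checks $\bar F'>0$ on $(0,\tfrac12)$, so $\bar F$ is continuous and strictly increasing there with $\bar F(0+)=0$; set $\psi:=\bar F^{-1}$, which is again continuous, strictly increasing with $\psi(0+)=0$.

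Passing to the copula, $F_i^{-1}(1-t)=-\psi(t)$, so for $\bfu=(1-t_1,1-t_2)$ with $t_1,t_2>0$ small,
\[
1-C_\lambda(1-t_1,1-t_2)=t_1+t_2-\bar F\!\left(\frac{\psi(t_1)\psi(t_2)}{\psi(t_1)+\psi(t_2)}\right).
\]
For $\lambda=0$ we have $\bar F=\psi=\mathrm{id}$, hence $C_0(\bfu)=1-\bigl(t_1+t_2-t_1t_2/(t_1+t_2)\bigr)$ on a whole neighborhood of $\bfone$; thus $C_0$ is a GPC with $\norm{(x_1,x_2)}_D:=\norm\bfx_1-\abs{x_1}\abs{x_2}/\norm\bfx_1$, and a one-line integration gives $\norm\bfx_D=E\bigl[\max(2U\abs{x_1},2(1-U)\abs{x_2})\bigr]$, which exhibits it as a genuine $D$-norm (the generator $(2U,2(1-U))$ has mean $1$ in each coordinate).

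For $\lambda\ne0$ I would rule out $C_\lambda\in\mathcal D(G)$ by looking at the diagonal $t_1=t_2=t$: there the argument of $\bar F$ is $\psi(t)/2$, and using $t=\bar F(\psi(t))$ one finds, with $\theta:=\log\psi(t)$,
\[
\frac{1-C_\lambda(1-t,1-t)}{t}=2-\frac{1+\tfrac{2\lambda}{5}(2\sin\theta-\cos\theta)}{2\bigl(1+\tfrac{2\lambda}{5}(2\sin(\theta+\log2)-\cos(\theta+\log2))\bigr)}=:2-g(\theta).
\]
As $\abs\lambda\le\tfrac{\sqrt2}2$ the denominator is bounded away from $0$, so $g$ is continuous and $2\pi$-periodic; it is non-constant, because matching the coefficients of $1$, $\sin\theta$, $\cos\theta$ in ``$g\equiv{}$const'' forces $\cos\log2=1$ and $\sin\log2=0$, which is false. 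Since $\psi$ is continuous with $\psi(0+)=0$, as $t\downarrow0$ the quantity $\theta=\log\psi(t)$ sweeps continuously through all sufficiently negative reals, so $g(\theta)$ --- and with it $(1-C_\lambda(1-t,1-t))/t$ --- oscillates and has no limit. But if $C_\lambda$ were in $\mathcal D(G)$ for some EVD $G$, Corollary 2.2 of \citet{aulbf11} would give $C_\lambda(\bfu)=1-\norm{\bfu-\bfone}_D+o(\norm{\bfu-\bfone})$ and hence $(1-C_\lambda(1-t,1-t))/t\to\norm{(1,1)}_D$; contradiction.

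The delicate point is the first step --- conditioning on $U$ and observing that the split at $u^\ast$ makes the two branches equal to $w$, which is precisely what collapses the $H_\lambda$-integrals to the single function $\bar F$. Once that is in place, the $\lambda=0$ statement is a direct identification and the $\lambda\ne0$ statement rests only on $\log2$ being incommensurable with $2\pi$.
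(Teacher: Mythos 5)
Your proof is correct, and it takes a genuinely different route from the paper's. The paper never computes the copula in closed form: it restricts to the diagonal from the outset and parametrizes it as $\bigl(F_\lambda(t),F_\lambda(t)\bigr)$ with $t\uparrow0$, so that $\bigl(1-C_\lambda(F_\lambda(t),F_\lambda(t))\bigr)/\bigl(1-F_\lambda(t)\bigr)$ becomes a ratio of integrals of $H_\lambda$ with no inversion of the margin required, and the non-existence of the limit is exhibited on the explicit sequences $t_n^{(1)}=-e^{(1-2n)\pi}$ and $t_n^{(2)}=-e^{(1/2-2n)\pi}$. You instead derive the exact identity $1-C_\lambda(1-t_1,1-t_2)=t_1+t_2-\bar F\bigl(\psi(t_1)\psi(t_2)/(\psi(t_1)+\psi(t_2))\bigr)$ on a whole neighborhood of $\bfone$ via the collapse of the two $H_\lambda$-integrals at $u^\ast$. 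This buys you two things: for $\lambda=0$ the GPC claim becomes an exact identification of $C_0$ with $1-\norm{\cdot}_D$ near $\bfone$, and your generator representation $\norm{\bfx}_D=E\bigl[\max(2U\abs{x_1},2(1-U)\abs{x_2})\bigr]$ makes the argument self-contained where the paper only computes $\lim_{\eps\downarrow0}(1-C_0(1+\eps\bfx))/\eps$ and cites \citet{aulbf11}; for $\lambda\ne0$ the problem reduces to a continuous $2\pi$-periodic $g$ being non-constant while $\theta=\log\psi(t)$ sweeps to $-\infty$, a soft argument replacing the paper's explicit sequences, at the price of having to control $\psi=\bar F^{-1}$ (which you do). One cosmetic point: your closing remark that the $\lambda\ne0$ case rests on $\log 2$ being incommensurable with $2\pi$ overstates what is needed and what you proved; the coefficient matching only requires, and only establishes, $\log 2\notin2\pi\mathbb{Z}$, i.e.\ $\cos(\log 2)\ne1$, which is all the argument uses.
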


Denote by $F_\lambda$ the df of $-V/S_1=_D-V/S_2$. Elementary computations
yield that it is given by
\begin{equation*} 
    F_\lambda(x) =
    \begin{cases}
      \abs{x}^{-1}\left(\frac12 + \frac\lambda5\right), &\text{if }x\le-1,\\
      1-\abs{x}\Bigl(\frac12+\frac\lambda5\big(2\sin(\log\abs{x}) -\cos(\log\abs{x})\big)\Bigr), &\text{if } {-1}<x<0,
    \end{cases}
  \end{equation*}
and, thus, $F_\lambda$ is continuous and strictly increasing on
$(-\infty,0]$.

\begin{proof}[Proof of Lemma \ref{lem:copula_not_in_domain_of_attraction}]
We show that
\[
\lim_{s\downarrow 0} \frac{1-C_\lambda(1-s,1-s)} s
\]
does not exist for
$\lambda\in\bigl[-\frac{\sqrt2}2,\frac{\sqrt2}2\bigr]\setminus\set0$. Since
$C_\lambda$ coincides with the copula of $2\bfX$ we obtain setting
$s=1-F_\lambda(t)$, $t\uparrow 0$,
\begin{align*}
  \frac{1-C_\lambda\bigl(F_\lambda(t),F_\lambda(t)\bigr)}{1-F_\lambda(t)}
  &= \frac{1-P\bigl(-V/S_1\le t, -V/S_2\le t\bigr)}{1-P\bigl(-V/S_1\le t\bigr)} \\
  &= \frac{1-P\bigl(V\ge \abs{t}\max\{U,1-U\}\bigr)}{1-P\bigl(V\ge \abs{t}U)} \\
  &= \frac{\int_0^1 P\bigl(V\le \abs t\max\{u,1-u\}\bigr)\, du}{\int_0^1 P\bigl(V\le \abs t u\bigr)\, du}\\
  &= \frac{\int_0^{1/2} H_\lambda\bigl(\abs t(1-u)\bigr)\, du + \int_{1/2}^1 H_\lambda\bigl(\abs t u\bigr)\, du}{\int_0^1 H_\lambda\bigl(\abs t u\bigr)\, du}\\
  &=2 \frac{\int_{1/2}^1 H_\lambda\bigl(\abs t u\bigr)\, du}{\int_0^1 H_\lambda\bigl(\abs t u\bigr)\, du}.
\end{align*}
The  substitution  $u\mapsto u/\abs t$ yields
\begin{equation*}
  1 - \frac12\frac{1-C_\lambda\bigl(F_\lambda(t),F_\lambda(t)\bigr)}{1-F_\lambda(t)}
  = 1 - \frac{\int_{\abs t/2}^{\abs t} H_\lambda(u)\, du}{\int_0^{\abs t} H_\lambda(u)\, du}
  = \frac{\int_0^{\abs t/2} H_\lambda(u)\, du}{\int_0^{\abs t} H_\lambda(u)\, du}
\end{equation*}
where we have for each $0<c\le1$
\begin{align*}
  \int_0^c H_\lambda(u)\, du
  = \frac{c^2}{2} + \lambda \int_0^c  u\sin(\log(u))\, du.
\end{align*}
and
\begin{equation*} 
  \int_0^c  u^2\cdot \frac1u\sin(\log(u))\, du
  = \frac{c^2}{5} \bigl(2\sin(\log(c)) - \cos(\log(c))\bigr)
\end{equation*}
which can be seen by applying integration by parts twice. Hence we
obtain
\begin{equation*}
  \frac{\int_0^{\abs t/2} H_\lambda(u)\, du}{\int_0^{\abs t} H_\lambda(u)\, du}
  = \frac14 \frac{\frac12 + \frac{\lambda}{5} \bigl(2\sin(\log\abs t - \log(2)) - \cos(\log\abs t - \log(2))\bigr)}{\frac12 + \frac{\lambda}{5} \bigl(2\sin(\log\abs t) - \cos(\log\abs t)\bigr)},
\end{equation*}
whose limit does not exist for $t\uparrow0$ if
$\lambda\in\bigl[-\frac{\sqrt2}2,\frac{\sqrt2}2\bigr]\setminus\set0$;
consider, e.\,g., the sequences $t_n^{(1)}=-\exp\bigl((1-2n)\pi\bigr)$ and
$t_n^{(2)}=-\exp\bigl((1/2-2n)\pi\bigr)$ as $n\to\infty$.

On the other hand, elementary computations show for
$\bfx=(x_1,x_2)^\T\in(-\infty,0]^2\setminus\set{\bfzero}$
\begin{equation*}
  \lim_{\eps\downarrow0}\frac{1-C_0(1+\eps\bfx)}{\eps}
  = 2 E(\max\set{\abs{x_1}S_1,\abs{x_2}S_2})
  = \norm{\bfx}_1 - \frac{\abs{x_1}\abs{x_2}}{\norm{\bfx}_1}.
\end{equation*}
The remaining assertion is thus implied by Section 2 of \citet{aulbf11}.
\end{proof}

\begin{rem}
Similar results as in Lemma \ref{lem:copula_not_in_domain_of_attraction} can
be obtained for different distributions of the rv $(S_1,S_2)$ in
\eqref{eqn:copula_not_in_domain_of_attraction}, which is still assumed to be
independent of $V$. If $\lambda=0$, then $S_1=S_2=U$ implies
$\norm{{\cdot}}_D = \norm{{\cdot}}_\infty$, whereas $S_1=U_1$, $S_2=U_2$
gives
\begin{equation*}
  \norm{\bfx}_D = \norm{\bfx}_\infty + \frac{\left(\norm{\bfx}_1-\norm{\bfx}_\infty\right)^2}{3\norm{\bfx}_\infty},\qquad \bfx\ne\bfzero,
\end{equation*}
where $U,U_1,U_2$ are independent and uniformly distributed on $[0,1]$.
However, if $\lambda\ne0$, then we obtain
\begin{equation*}
  \frac{1-C_\lambda\bigl(F_\lambda(t),F_\lambda(2t)\bigr)}{1-F_\lambda(t)}
  = 2 \frac{5+2\lambda \bigl(\sin\bigl(\log(2\abs{t})\bigr)-\cos\bigl(\log(2\abs{t})\bigr)\bigr)}{5+2\lambda\bigl(2\sin(\log\abs{t})-\cos(\log\abs{t})\bigr)},
  \quad t\in\Bigl(-\frac12,0\Bigr),
\end{equation*}
and
\begin{equation*}
  \frac{1-C_\lambda\bigl(F_\lambda(t),F_\lambda(t)\bigr)}{1-F_\lambda(t)}
  = 1 + \frac{5+6\lambda \sin(\log\abs{t})}{15+6\lambda\bigl(2\sin(\log\abs{t})-\cos(\log\abs{t})\bigr)},
  \quad t\in(-1,0),
\end{equation*}
respectively. Note that both terms have no limit for $t\uparrow0$; consider,
e.\,g., the sequences $\bigl(t_n^{(1)}\bigr)_n$ and $\bigl(t_n^{(2)}\bigr)_n$
as in the proof of Lemma \ref{lem:copula_not_in_domain_of_attraction}.
\end{rem}

\section{$\delta$-neigborhoods} \label{sec:delta-neighborhoods}
The significance of the $\delta$-neighborhood of a GPC can be seen as
follows. Denote by $R:=\set{\bft\in[0,1]^d:\,\norm{\bft}_1=\sum_{i=1}^d
t_i=1}$ the unit sphere in $[0,\infty)^d$ with respect to the $L_1$-norm
$\norm\cdot_1$. Take an arbitrary copula $C$ on $\R^d$ and put for $\bft\in
R$
\[
C_{\bft}(s):=C(\bfone+s\bft),\qquad s\le 0.
\]
Then $C_{\bft}$ is a univariate df on $(-\infty,0]$ and the copula $C$ is
obviously determined by the family
\[
\mathcal P(C):=\set{C_{\bft}:\,\bft\in R}
\]
of univariate \emph{spectral df} $C_{\bft}$. This family $\mathcal P(C)$ is
the \emph{spectral decomposition} of $C$; cf. \citet[Section 5.4]{fahure10}.
A copula $C$ is, consequently, in $\mathcal D(G)$ with corresponding $D$-norm
$\norm{\cdot}_D$ iff its spectral decomposition satisfies
\[
C_{\bft}(s)=1+s\norm{\bft}_D+o(s),\qquad \bft\in R,
\]
as $s\uparrow 0$. The copula $C$ is in the $\delta$-neighborhood of the GPC
$C_D$ with $D$-norm $\norm\cdot_D$ iff
\begin{equation}\label{eqn:characterization_of_delta_neighborhood_via_sdf}
1-C_{\bft}(s)=\left(1-C_{D,\bft}(s)\right)\left(1+O\left(\abs s^\delta\right)\right)
\end{equation}
uniformly for $\bft\in R$ as $s\uparrow 0$. In this case we know from
\citet[Theorem~5.5.5]{fahure10} that
\begin{equation}\label{eqn:rate_of_convergence}
\sup_{\bfx\in(-\infty,0]^d}\abs{C^n\left(\bfone + \frac 1n\bfx\right) - \exp\left(-\norm{\bfx}_D\right)} = O\left(n^{-\delta}\right).
\end{equation}
Under additional differentiability conditions on $C_{\bft}(s)$ with respect
to $s$, also the reverse implication \eqref{eqn:rate_of_convergence}
$\implies$ \eqref{eqn:characterization_of_delta_neighborhood_via_sdf} holds;
cf. \citet[Theorem 5.5.5]{fahure10}. Thus the $\delta$-neighborhood of a GPC,
roughly, collects those copula with a polynomial rate of convergence of
maxima.

\begin{exam}[Archimedean Copula]\upshape
Let
\[
C(\bfu)=\varphi^{[-1]}\left(\sum_{k=1}^d\varphi(u_i)\right),\qquad \bfu=(u_1,\dots,u_d)^\T\in[0,1]^d,
\]
be an Archimedean copula with generator function
$\varphi:\,[0,1]\to[0,\infty]$ and $\varphi^{[-1]}(t) := \inf\{u\in[0,1]:
\varphi(u)\le t\}$, $0\le t\le\infty$. The function $\varphi$ is in
particular strictly decreasing, continuous and satisfies $\varphi(1)=0$; for
a complete characterization of the function $\varphi$ we refer to
\citet{mcnn09}.

Suppose that $\varphi$ is differentiable on $[\eps,1]$ for some $\eps<1$ with
derivative satisfying
\begin{equation}\label{cond:on_generator_function}
\varphi'(1)<0,\quad \varphi'(1-h)=\varphi'(1)+O(h^\delta)
\end{equation}
for some $\delta>0$ as $h\downarrow 0$. Then $C$ is in the
$\delta$-neighborhood of a GPC with $D$-norm given by
$\norm{\bfx}_D=\norm{\bfx}_1$, $\bfx=(x_1,\dots,x_d)^\T\in\R^d$.

The Clayton family with
$\varphi_\vartheta(t)=\vartheta^{-1}\left(t^{-\vartheta}-1\right)$,
$\vartheta\in[-1,\infty)\backslash\set 0$ satisfies condition
\eqref{cond:on_generator_function} with $\delta=1$ if $\vartheta>-1$. The
parameter $\vartheta=-1$ yields a GPC with corresponding $D$-norm
$\norm\cdot_1$.

The Gumbel-Hougard family with $\varphi_\vartheta(t)=(-\log(t))^\vartheta$,
$\vartheta\in[1,\infty)$ does not satisfy condition
\eqref{cond:on_generator_function}. But for $\vartheta\in[1,2)$ it is in the
$\delta$-neighborhood with $\delta=2-\vartheta$ of a GPC having $D$-norm
$\norm{\bfx}_\vartheta=\left(\sum_{i=1}^d\abs{x_i}^\vartheta\right)^{1/\vartheta}$.
For general results on the limiting distributions of Archimedean copulas we
refer to \citet{chas09} and \citet{larn11}.
\end{exam}

\begin{exam}[Normal Copula]\upshape
Let $C$ be a normal copula, i.e., $C$ is the df of
$\bfU=((\Phi(X_1),\dots,\Phi(X_d))^\T$, where $\Phi$ denotes the standard
normal df and $(X_1,\dots,X_d)^\T$ follows a multivariate normal distribution
$N(\bm{\mu},\bfSigma)$ with mean vector $\bm\mu=\bfzero$ and covariance
matrix $\bfSigma=(\rho_{ij})_{1\le i,\le j}$, where $\rho_{ii}=1$, $1\le i\le
d$. If $-1<\rho_{ij}<0$ for $1\le i\not= j\le d$, then $C$ is in the
$\delta$-neighborhood of a GPC $C_D$ with $\norm\cdot_D=\norm\cdot_1$ and
\[
\delta=\min_{1\le i\not=j\le d}\frac{\rho_{ij}^2}{1-\rho_{ij}^2}.
\]

\begin{proof}
Put $\bfy:=\left(\Phi(x_i)\right)_{i=1}^d$. Then we have
\begin{align*}
C(\bfy)&= P\left(\Phi(X_i)\le y_i,\,1\le i\le d\right)\\
&=P(X_i\le x_i,\,1\le i\le d)\\
&=1- P\left(\bigcup_{i=1}^d\set{X_i>x_i}\right)\\
&=1 - \sum_{i=1}^d P(X_i>x_i)+ \sum_{T\subset\set{1,\dots,d},\abs T\ge 2} (-1)^{\abs T} P(X_i>x_i,\,i\in T)\\
&=1-\norm{\bfone-\bfy}_1+ \sum_{T\subset\set{1,\dots,d},\abs T\ge 2} (-1)^{\abs T} P(X_i>x_i,\,i\in T)
\end{align*}
by the inclusion-exclusion theorem.

By $c$ we denote in what follows a positive generic constant. We have
\begin{align*}
\abs{\sum_{T\subset\set{1,\dots,d},\abs T\ge 2} (-1)^{\abs T} P(X_i>x_i,\,i\in T)}
\le c \sum_{1\le i\not= j\le d} P(X_i>x_i,X_j>x_j).
\end{align*}
We will show that for all $i\not=j$
\begin{equation}\label{eqn:first_bound_in_normal_copula_example}
\frac{P(X_i>x_i,X_j>x_j)}{\left(\sum_{m=1}^d(1-\Phi(x_m))\right)^{1+\delta}} \le c,\qquad 1\le i\not=j\le d,
\end{equation}
for $\bfx\ge\bfx_0$, where $\bfx_0\in\R^d$ is specified later. This,
obviously, implies the assertion.

Equation \eqref{eqn:first_bound_in_normal_copula_example} is implied by the
inequality
\begin{equation}\label{eqn:second_bound_in_normal_copula_example}
\frac{P(X_i>x_i,X_j>x_j)^{\frac 1{1+\delta}}}{1-\Phi(x_i)+ 1-\Phi(x_j)}\le c,\qquad 1\le i\not=j\le d,
\end{equation}
for $\bfx\ge\bfx_0$, which we will establish in the sequel.

Fix $i\not=j$. To ease the notation we put
$X:=X_i,\,Y:=X_j,\,x:=x_i,\,y:=x_i,\,\rho:=\rho_{ij}$. The covariance matrix
of $(X,Y)^\T$ is $\bfSigma_{X,Y}=\begin{pmatrix} 1 &\rho\\ \rho & 1
\end{pmatrix}$, its inverse is $\bfSigma_{X,Y}^{-1} =\frac 1 {1-\rho^2}
\begin{pmatrix} 1 &-\rho\\ -\rho & 1 \end{pmatrix}$ and, hence,
\[
\bfSigma_{X,Y}^{-1} \begin{pmatrix} x\\ y\end{pmatrix} =\frac 1{1-\rho^2}  \begin{pmatrix} x-\rho y\\ y-\rho x\end{pmatrix} >\bfzero
\]
if $x,y>0$; recall that $\rho<0$. From \citet{sav62} (see also \citet{ton89}
and \citet{hash03}) we obtain the bound
\begin{equation}\label{eqn:third_equation_in_normal_copula_example}
P(X>x, Y> y)\le c \frac 1{(x-\rho y)(y-\rho x)} \exp\left(-\frac{x^2-2\rho xy+y^2}{2(1-\rho^2)}\right),\quad x,y>0.
\end{equation}
By the obvious inequality
\[
\delta=\min_{1\le k\not=m\le d}\frac{\rho_{km}^2}{1-\rho_{km}^2}\le \frac{\rho^2}{1-\rho^2}
\]
we obtain
\[
\frac 1{1+\delta}\ge 1-\rho^2
\]
and, thus, equation \eqref{eqn:third_equation_in_normal_copula_example}
implies
\[
P(X>x,Y>y)^{\frac 1{1+\delta}} \le c\frac 1{((x-\rho y)(y-\rho x))^{1-\rho^2}} \exp\left(-\frac{x^2-2\rho xy+y^2}{2(1-\rho^2)}\right).
\]

From the fact that $1-\Phi(x)\sim \varphi(x)/x$ as $x\to\infty$, where
$\varphi=\Phi'$ denotes the standard normal density, we obtain for $x,y\ge
x_0$
\begin{align*}
\frac{P(X>x,Y>y)^{\frac 1{1+\delta}}} {1-\Phi(x)+1-\Phi(y)} &\le c \frac{x\exp\left(\frac{x^2} 2\right)+ y\exp\left(\frac{y^2} 2\right)} {((x-\rho y)(y-\rho x))^{1-\rho^2} \exp\left(\frac{x^2-2\rho xy+y^2}2\right)}\\
&= c \frac{x\exp\left(-\frac{y^2} 2\right)+ y\exp\left(-\frac{x^2} 2\right)} {((x-\rho y)(y-\rho x))^{1-\rho^2} \exp(-\rho xy)}\\
&\le c \frac{x\exp\left(-\frac{y^2} 2\right)+ y\exp\left(-\frac{x^2} 2\right)} {(xy)^{1-\rho^2} \exp(-\rho xy)}\\
&\le c \frac{(xy)^{\rho^2}}{\exp(-\rho xy)}\\
&\le c;
\end{align*}
recall that $\rho<0$. This implies equation
\eqref{eqn:second_bound_in_normal_copula_example} and, thus, the assertion.
\end{proof}
\end{exam}

\section{A test based on copula data} \label{sec:test_multivariate}
This section deals with deriving a test for condition
\eqref{eqn:def_delta-neighborhood} based on independent copies
$\bfU^{(1)},\dots, \bfU^{(n)}$ of the rv $\bfU =
\left(U_1,\dots,U_d\right)^\T$ having df $C$. Put for $s<0$
\begin{equation*}
  S_{\bfU}(s):=\sum_{i=1}^d 1_{(s,\infty)}(U_i-1),
\end{equation*}
which is a discrete version of the sojourn time that $\bfU$ spends above the
threshold $1+s$; see \citet{falkho11} for details as well as Section
\ref{sec:test_GPCP}. If $C$ satisfies condition
\eqref{eqn:def_delta-neighborhood}, then we obtain with
$\bfs:=s\bfone\in\R^d$
\begin{align}
  P\left(S_{\bfU}(s)=0\right)&=P(\bfU\le \bfone+\bfs)\nonumber\\
  &=C(\bfone+\bfs)\nonumber\\
  &=1-\abs s m_D + O\left(\abs s^{1+\delta}\right). \label{eqn:expansion_of_binomial_probability}
\end{align}
The constant $m_D:=\norm{\bfone}_D$, which is always between $1$ and $d$,
measures the tail dependence of the margins of $C$. It is the extremal
coefficient \citep{smith90} and equal to one in case of complete dependence
of the margins and equal to $d$ in case of independence \citep{taka88}; we
refer to \citet[Section 4.4]{fahure10} for further details.

\subsection{Observing copula data directly} \label{sec:test_in_case_of_copula}

In order to test for condition \eqref{eqn:expansion_of_binomial_probability},
we fit a grid in the upper tail of the copula $C$ and observe the exceedances
with respect to this grid: Choose $k\in\N$ and put for $0<c<1$
\begin{equation}\label{eqn:definition_of_n_j(c)}
  n_j(c):=\sum_{i=1}^n 1_{(0,\infty)}\Bigl(S_{\bfU^{(i)}}\Bigl(-\frac cj\Bigr)\Bigr),\qquad 1\le j\le k.
\end{equation}
$n_j(c)$ is the number of those rv $\bfU^{(i)}$ among the independent copies
$\bfU^{(1)},\dots,\bfU^{(n)}$ of $\bfU$, whose sojourn times above $1 -
\frac{c}{j}$ are positive, i.e., at least one component of $\bfU^{(i)}$
exceeds the threshold $1 - \frac{c}{j}$. On the other hand,
\begin{equation*}
  n-n_j(c)
  = \sum_{i=1}^n 1_{\set 0}\Bigl(S_{\bfU^{(i)}}\Bigl(-\frac cj\Bigr)\Bigr)
  = \sum_{i=1}^n 1_{\left[\bfzero,\left(1-\frac cj\right)\bfone\right]}\bigl(\bfU^{(i)}\bigr)
\end{equation*}
is the number of those rv $\bfU^{(i)}$ whose realizations are below the
vector with constant entry $1 - \frac{c}{j}$.

If $C$ satisfies condition \eqref{eqn:def_delta-neighborhood}, then each
$n_j(c)$ is binomial $B(n,p_j(c))$-distributed with
\begin{equation*}
  p_j(c)
  := 1-P\Bigl(S_{\bfU}\Bigl(-\frac cj\Bigr)=0\Bigr)
  = \frac cj m_D + O\left(c^{1+\delta}\right).
\end{equation*}
Motivated by the usual $\chi^2$-goodness-of-fit test, we consider in what
follows the test statistic
\begin{equation}\label{eqn:definition_of_test_statistic_T_n}
  T_n(c)
  := \frac{\sum_{j=1}^k\left(j\, n_j(c) - \frac 1k\sum_{\ell=1}^k \ell\, n_\ell(c)\right)^2}{\frac 1k\sum_{\ell=1}^k \ell\, n_\ell(c)}
\end{equation}
which does not require the constant $m_D$ to be known. By $\to_D$ we denote
ordinary convergence in distribution as the sample size $n$ tends to
infinity.

\begin{theorem}\label{thm:limit_distribution_of_T_n}
Suppose that $C$ satisfies condition \eqref{eqn:def_delta-neighborhood} with
$\delta>0$. Let $c=c_n$ satisfy $c_n\to 0$, $nc_n\to\infty$ and
$nc_n^{1+2\delta}\to0$ as $n\to\infty$. Then we obtain
\[
T_n(c_n)\to_D  \sum_{i=1}^{k-1} \lambda_i \xi_i^2,
\]
where  $\xi_1,\dots,\xi_{k-1}$ are independent and standard normal distributed rv and
\[
\lambda_i=\frac 1 {4\sin^2\left(\frac i k\frac \pi 2\right)},\qquad 1\le i\le k-1.
\]
\end{theorem}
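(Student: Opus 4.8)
The plan is to work with the rescaled counts $j\,n_j(c_n)$ for $1\le j\le k$ and show that, after centering and scaling by $\sqrt{nc_n}$, they converge jointly to a Gaussian vector with a simple covariance structure; the statistic $T_n(c_n)$ is then a continuous quadratic functional of this vector and its limit follows from the continuous mapping theorem.

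First I would fix notation: write $N_n := \bigl(n_1(c_n),\dots,n_k(c_n)\bigr)^\T$ and recall that, under condition \eqref{eqn:def_delta-neighborhood}, each $n_j(c_n)$ is $B\bigl(n,p_j(c_n)\bigr)$-distributed with $p_j(c_n) = \frac{c_n}{j} m_D + O\bigl(c_n^{1+\delta}\bigr)$, and that the events $\{S_{\bfU^{(i)}}(-c_n/\ell)>0\}$ are nested in $\ell$, so the joint law of $N_n$ is that of a sum of $n$ i.i.d. vectors each supported on monotone $0/1$ patterns. I would compute $\mathrm{Cov}\bigl(1_{(0,\infty)}(S_{\bfU}(-c_n/j)),1_{(0,\infty)}(S_{\bfU}(-c_n/\ell))\bigr)$: for $j\le\ell$ this equals $p_j(c_n) - p_j(c_n)p_\ell(c_n) = \frac{c_n}{j}m_D + O(c_n^{1+\delta}) + O(c_n^2)$, since $p_j\wedge p_\ell = p_{\min(j,\ell)} = p_j$. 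Hence $\frac{1}{nc_n}\mathrm{Cov}\bigl(j\,n_j(c_n),\ell\,n_\ell(c_n)\bigr) \to m_D\,j\wedge\ell\cdot\frac{j\ell}{j\ell} $... more precisely $\to m_D\,\frac{j\ell}{\min(j,\ell)}$, i.e. $m_D\max(j,\ell)$. The scaling hypotheses $nc_n\to\infty$ and $nc_n^{1+2\delta}\to0$ are exactly what makes the $O(c_n^{1+\delta})$ bias terms negligible after multiplication by $\sqrt{n}/\sqrt{nc_n}=1/\sqrt{c_n}$: the bias of $j\,n_j(c_n)/\sqrt{nc_n}$ is $O\bigl(\sqrt{n}\,c_n^{1+\delta}/\sqrt{c_n}\bigr)=O\bigl(\sqrt{nc_n^{1+2\delta}}\bigr)\to0$. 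Apply the multivariate Lindeberg–Feller CLT (the Lindeberg condition is trivial since the summands are bounded by $k$) to get
\[
\frac{1}{\sqrt{nc_n}}\Bigl(j\,n_j(c_n) - n\,c_n\tfrac{m_D}{1}\Bigr)_{j=1}^k \;\to_D\; \bfZ \sim N\bigl(\bfzero, m_D\,\bfM\bigr),\qquad M_{j\ell}=\max(j,\ell).
\]

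Next I would rewrite $T_n(c_n)$ in terms of the centered vector. Let $\bar Y_n := \frac1k\sum_{\ell=1}^k \ell\,n_\ell(c_n)$; since each $\ell\,n_\ell(c_n)/(nc_n)\to m_D$ in probability, $\bar Y_n/(nc_n)\to m_D$, so the denominator of $T_n(c_n)$ satisfies $\bar Y_n/(nc_n)\to m_D$. The numerator is $\sum_{j=1}^k\bigl(j\,n_j(c_n)-\bar Y_n\bigr)^2$; because subtracting the common mean $n c_n m_D$ from every entry does not change the differences $j\,n_j(c_n)-\bar Y_n$, we may replace $j\,n_j(c_n)$ by its centered version. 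Dividing numerator and denominator by $nc_n$ and writing $\bfP := \bfI_k - \frac1k \bfone\bfone^\T$ for the projection onto the orthogonal complement of $\bfone$, we get
\[
T_n(c_n) = \frac{\norm{\bfP\,(nc_n)^{-1/2}\bigl(j\,n_j(c_n)-nc_n m_D\bigr)_j}_2^2}{\bar Y_n/(nc_n)} \;\to_D\; \frac{\norm{\bfP\bfZ}_2^2}{m_D} = \norm{\bfP\bfW}_2^2,
\]
where $\bfW\sim N(\bfzero,\bfM)$. It remains to diagonalize $\bfP\bfM\bfP$ restricted to $\bfone^\perp$: one checks that $\bfM^{-1}$ is the tridiagonal second-difference-type matrix whose eigenvectors (with Dirichlet-type boundary behavior coming from the projection $\bfP$) are the discrete sine vectors $v^{(i)}_\ell = \sin\bigl(\frac{i\ell\pi}{k}\bigr)$-type functions, with eigenvalues $4\sin^2\bigl(\frac{i}{k}\frac\pi2\bigr)$ for $1\le i\le k-1$; equivalently the $k-1$ nonzero eigenvalues of $\bfP\bfM\bfP$ are $\lambda_i = 1/\bigl(4\sin^2(\frac{i}{k}\frac\pi2)\bigr)$. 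Then $\norm{\bfP\bfW}_2^2 = \sum_{i=1}^{k-1}\lambda_i\xi_i^2$ with $\xi_i$ i.i.d.\ standard normal, which is the claim.

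\textbf{Main obstacle.} The routine parts are the CLT and the continuous-mapping step; the genuinely delicate computation is the spectral analysis that turns the covariance matrix $M_{j\ell}=\max(j,\ell)$, after sandwiching by the centering projection $\bfP$, into the explicit eigenvalues $1/\bigl(4\sin^2(\frac{i}{k}\frac\pi2)\bigr)$. I expect to handle this by noting that the inverse of $\bfM=(\max(j,\ell))_{j,\ell}$ is the matrix with $2$ on the diagonal except $1$ in the bottom-right corner and $-1$ on the sub/super-diagonals — a known fact about such ``$\max$'' Green's-function matrices — and that conjugating the associated quadratic form by $\bfP$ enforces the boundary condition at the index-$k$ end; the resulting Jacobi matrix has the classical sine eigenvectors. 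A second point needing care is justifying that the $O(c_n^{1+\delta})$ remainder in $p_j(c_n)$, and the analogous remainders in the pairwise probabilities $P\bigl(S_\bfU(-c_n/j)=0\bigr)$, are uniform in $j\in\{1,\dots,k\}$; this follows because $k$ is fixed and $\eqref{eqn:def_delta-neighborhood}$ gives a single $O$-bound as $\bfu\uparrow\bfone$ which absorbs the finitely many scalings $c_n/j$.
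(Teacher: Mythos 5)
Your overall strategy --- a joint CLT for the centered vector $\bigl((j\,n_j(c_n)-nc_nm_D)/\sqrt{nc_n}\bigr)_{j=1}^k$, continuous mapping onto the quadratic form with the centering projection $\bfP$, and a spectral analysis of the projected covariance matrix --- is exactly the paper's proof. However, your covariance computation contains a genuine error. The events $A_\ell:=\set{S_{\bfU}(-c_n/\ell)>0}$ are nested, but in the direction opposite to the one you use: for $j\le\ell$ the threshold $1-c_n/\ell$ is \emph{higher} than $1-c_n/j$, so $A_\ell\subset A_j$ and $P(A_j\cap A_\ell)=p_\ell(c_n)=p_{\max(j,\ell)}(c_n)$, not $p_{\min(j,\ell)}(c_n)$. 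Your identity ``$p_j\wedge p_\ell=p_{\min(j,\ell)}$'' conflates the minimum of the probabilities with the probability at the minimal index; since $p_j(c_n)=\frac{c_n}{j}m_D+O(c_n^{1+\delta})$ is \emph{decreasing} in $j$, the smaller of $p_j,p_\ell$ sits at the larger index. The correct limit is
\[
\frac{1}{nc_n}\operatorname{Cov}\bigl(j\,n_j(c_n),\ell\,n_\ell(c_n)\bigr)
=\frac{j\ell}{c_n}\bigl(p_{\max(j,\ell)}(c_n)-p_j(c_n)p_\ell(c_n)\bigr)
\to \frac{j\ell}{\max(j,\ell)}\,m_D=\min(j,\ell)\,m_D,
\]
the Brownian-motion covariance, exactly as in the paper. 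Your matrix $M_{j\ell}=\max(j,\ell)$ is not even positive semidefinite (for $k=2$ it is $\bigl(\begin{smallmatrix}1&2\\2&2\end{smallmatrix}\bigr)$, with determinant $-2$), so the asserted limit $N(\bfzero,m_D\bfM)$ does not exist; worse, since $\max(i,j)=i+j-\min(i,j)$ and $\bfP$ annihilates the two rank-one pieces, one has $\bfP\bfM_{\max}\bfP=-\bfP\bfM_{\min}\bfP$, so carrying your matrix through the spectral step would yield the \emph{negatives} of the claimed eigenvalues $\lambda_i$ rather than the stated limit law.

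The error is local: replacing $\max$ by $\min$ throughout restores the paper's argument verbatim. Indeed, your description of the spectral step --- the inverse of the covariance matrix being the tridiagonal second-difference matrix with a modified corner entry, discrete sine eigenvectors, eigenvalues $4\sin^2\left(\frac{i}{k}\frac{\pi}{2}\right)$ --- is the standard fact for the matrix $\bigl(\min(i,j)\bigr)_{i,j}$, i.e.\ the discretized Brownian motion, which is what the paper invokes via Anderson and Stephens (1997) and Fortiana and Cuadras (1997) after factoring $\bfSigma=\bfM_k\bfM_k^\T$ with $\bfM_k$ the lower-triangular matrix of ones. The remaining ingredients of your proposal --- the bias bound $O\bigl(\sqrt{nc_n^{1+2\delta}}\bigr)$ justifying the centering at $nc_nm_D$, the consistency of the denominator, the invariance of the numerator under subtraction of the common centering constant, and the uniformity in the finitely many indices $j$ --- are correct and coincide with the paper's proof.
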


\begin{rem}\upshape
We have $\lambda_1=1/2$ in case $k=2$, and $\lambda_1=1$, $\lambda_2=1/3$ in
case $k=3$. If the copula $C$ is a GPC, then the condition
$nc_n^{1+2\delta}\to_{n\to\infty}0$ in the preceding result can be dropped.
\end{rem}

The proof of Theorem \ref{thm:limit_distribution_of_T_n} shows that
$\sum_{i=1}^{k-1} \lambda_i \xi_i^2$ equals the distribution of
$\sum_{i=1}^k\left(B(i)- \frac 1k \sum_{j=1}^kB(j)\right)^2$, where
$(B(t))_{t\ge 0}$ is a standard Brownian motion on $[0,\infty)$.  Computing
the expected values, we obtain as a nice by-product the equation
\[
\sum_{i=1}^{k-1} \frac 1 {4\sin^2\left(\frac i k\frac \pi 2\right)} = \frac{(k-1)(k+1)}6,\qquad k\ge 2.
\]
Using characteristic functions it is straightforward to prove that
\[
\frac 6{(k-1)(k+1)}\sum_{i=1}^{k-1}\lambda_i\xi_i^2\to_D \frac{24}{\pi^2}\sum_{i=1}^\infty \frac 1{i^2}\xi_i^2
\]
as $k\to\infty$. Taking expectations on both sides motivates the well-known equality
\[
\sum_{i=1}^\infty \frac 1{i^2} =\frac{\pi^2} 6.
\]

\begin{proof}[Proof of Theorem \ref{thm:limit_distribution_of_T_n}]

Lindeberg's central limit theorem implies
\begin{equation*}
  \frac 1{(nc_n)^{1/2}} \bigl(j\, n_j(c_n) - n c_nm_D\bigr)
  \to_D N\bigl(0,j m_D\bigr)
\end{equation*}
and, thus,
\begin{equation*}
  \frac{j\, n_j(c_n)}{nc_n} \to_{n\to\infty}m_D\quad\text{in probability},\qquad 1\le j\le k,
\end{equation*}
yielding
\begin{equation*}
  \frac 1{nc_nk} \sum_{j=1}^k j\, n_j(c_n)
  \to_{n\to\infty} m_D \quad\mathrm{in\ probability}.
\end{equation*}
We, therefore, can substitute the denominator in the test statistic
$T_n(c_n)$ by $nc_nm_D$, i.e., $T_n(c_n)$ is asymptotically equivalent with
\begin{align*}
&\frac 1 {nc_n m_D} \sum_{j=1}^k\left(j\, n_j(c_n) - \frac 1k\sum_{\ell=1}^k \ell\, n_\ell(c_n)\right)^2\\
&= \frac 1{nc_n m_D}
   \begin{pmatrix}
     1\cdot n_1(c_n) - nc_nm_D \\
     \vdots \\
     k\cdot n_k(c_n) - nc_nm_D
   \end{pmatrix}^\T
   \left(\bfI_k-\frac 1 k \bfE_k\right)
   \begin{pmatrix}
     1\cdot n_1(c_n) - nc_nm_D \\
     \vdots \\
     k\cdot n_k(c_n) - nc_nm_D
   \end{pmatrix}\\
&= \bfY_n^\T \left(\bfI_k-\frac 1 k \bfE_k\right) \bfY_n,
\end{align*}
where $\bfY_n = (Y_{n,1},\dots,Y_{n,k})^\T$ with
\begin{equation*}
  Y_{n,j} = \frac 1{(nc_n m_D)^{1/2}}\bigl(j\, n_j(c_n) - nc_nm_D\bigr),\qquad 1\le j\le k,
\end{equation*}
$\bfI_k$ is the $k\times k$ unit matrix and $\bfE_k$ that $k\times k$-matrix
with constant entry 1. Note that the matrix $\bfP_k:= \bfI_k-k^{-1}\bfE_k$ is
a \emph{projection matrix}, i.e., $\bfP_k=\bfP_k^\T=\bfP_k^2$, and that
$\bfP_k\bfx=\bfzero$ for every vector $\bfx\in\R^k$ with constant entries.

The Cram\'{e}r-Wold theorem and Lindeberg's central limit theorem imply $\bfY_n
\to_D N(\bfzero,\bfSigma)$, where the $k\times k$-covariance matrix
$\bfSigma=(\sigma_{ij})$ is given by
\begin{align*}
  \sigma_{ij}
  &= \lim_{n\to\infty} \frac 1{nc_nm_D} E\left(\bigl(i\, n_i(c_n) - nc_nm_D\bigr)\bigl(j\, n_j(c_n) - nc_nm_D\bigr)\right) \\
  &= \lim_{n\to\infty} \frac{ij}{c_nm_D} E\left[1_{(0,\infty)}\left(S_{\bfU}\left(-\frac{c_n}{i\vphantom{j}}\right)\right)\, 1_{(0,\infty)}\left(S_{\bfU}\left(-\frac{c_n}{j}\right)\right)\right] \\
  &= \lim_{n\to\infty} \frac{ij}{c_nm_D} P\left(S_{\bfU}\left(-\frac{c_n}{\max(i,j)}\right)>0\right) \\
  &= \frac{ij}{\max(i,j)}
   = \min\left(i, j\right).
\end{align*}
Note that $\bfSigma= \bfM_k \bfM_k^{\T}$ where the $k\times k$-matrix
$\bfM_k$ is defined by
\begin{equation*}
\bfM_k:= \bigl(1_{[j,\infty)}(i)\bigr)_{1\le i,j\le k}=
\begin{pmatrix}
1&0&0&\dots&0\\
1&1&0&\dots&0\\
\vdots&\vdots&\ddots&\ddots&\vdots\\
1&1&\dots&1&0\\
1&1&\dots&1&1
\end{pmatrix}.
\end{equation*}
Altogether we obtain
\begin{equation*}
  T_n(c_n) \to_D \bfxi^\T \bfM_k^\T\left(I_k-\frac 1k E_k\right)\bfM_k \bfxi
\end{equation*}
with a $k$-dimensional standard normal rv $\bfxi = (\xi_1,\dots,\xi_k)^\T$.
It is well-known that the eigenvalues of
\begin{equation*}
  k\,\bfM_k^\T\left(\bfI_k-\frac 1k \bfE_k\right)\bfM_k
  = \bigl(k\min(i-1,j-1) - (i-1)(j-1)\bigr)_{1\le i,j\le k}
\end{equation*}
are
\begin{equation*}
  k\lambda_j = \frac k{4 \sin^2\left(\frac jk\frac\pi 2\right)},\ j=1,\dots,k-1,\quad\text{and}\quad \lambda_k=0,
\end{equation*}
see, for example, \cite{andst97} or \cite{forcu97}, with corresponding
orthonormal eigenvectors
\begin{equation*}
  \bfr_j = \sqrt{\frac2k} \left(\sin\left(\frac{(i-1)j\pi}{k}\right)\right)_{1\le i\le k},\ j=1,\dots,k-1, \quad\text{and}\quad \bfr_k=(1,0,\dots,0)^\T.
\end{equation*}
This implies
\begin{equation*}
  T_n(c_n) \to_D \bfxi^\T \mathinner{\operatorname{diag}(\lambda_1,\dots,\lambda_{k-1},0)} \bfxi
  = \sum_{i=1}^{k-1} \lambda_i \xi_i^2
\end{equation*}
as asserted.
\end{proof}

To evaluate the performance of the above test we consider in what follows $n$ independent copies $\bfU_1,\dots,\bfU_n$ of the rv $\bfU$, whose df $C$ satisfies for some $\delta>0$ the expansion
\[
C(\bfu)=1-\norm{\bfu-\bfone}_D-J\left(\frac{\bfu-\bfone}{\norm{\bfu-\bfone}_1}\right) \norm{\bfu-\bfone}_D^{1+\delta} + o\left(\norm{\bfu-\bfone}_D^{1+\delta}\right)
\]
as $\bfu\uparrow 1$, uniformly for $\bfu\in[0,1]^d$, where $J(\cdot)$ is an arbitrary function on the set $\set{\bfz\le\bfzero\in\R^d:\,\norm{\bfz}_1=1}$ of directions in $(-\infty,0]^d$. The above condition specifies the remainder term in the $\delta$-neighborhood condition \eqref{eqn:def_delta-neighborhood}. We obtain for $c\in(0,1)$ and $j\in\N$
\begin{align*}
p_j(c)&=1-P\left(\bfU\le 1-\frac cj\right)\\
&=\frac cj m_D + K\left(\frac cj m_D\right)^{1+\delta} + o\left(c^{1+\delta}\right),
\end{align*}
where $K:=J(\bfone/d)$. For $n_j(c)$ as defined in \eqref{eqn:definition_of_n_j(c)} we obtain by elementary arguments
\[
\frac 1 {(nc_nm_D)^{1/2}} (jn_j(c_n) - nc_nm_D) \to_D N\left(\frac{s^{1/2}m_D^{1/2+\delta}K}{j^\delta},j\right)
\]
if $c_n\to 0$, $nc_n\to\infty$ and $nc_n^{1+2\delta}\to s\ge 0$ as $n\to\infty$. Repeating the arguments in the proof of Theorem \ref{thm:limit_distribution_of_T_n} then yields for the test statistic $T_n$ defined in \eqref{eqn:definition_of_test_statistic_T_n}
\[
T_n(c_n)\to_D\sum_{i=1}^{k-1}\lambda_i(\xi_i+\mu_i)^2,
\]
where
\[
\mu_i:= K\sqrt{\frac{2s}k}m_D^{1/2+\delta} \sum_{j=1}^{k-1} \frac 1{(j+1)^\delta}\sin\left(j\frac{ i \pi}k\right),\qquad 1\le i\le k-1.
\]
Note that each $\mu_i> 0$ if $Ks>0$. With $K>0$, the fact that $C$ is not a GPC is, therefore, detected at an arbitrary level-one error iff $nc_n^{1+2\delta}\to\infty$. Testing for a $\delta$-neighborhood requires however the rate $nc_n^{1+2\delta}\to 0$.

\subsection{The case of an arbitrary random
vector}\label{sec:arbitrary_random_vector}

Consider a rv $\bfX=(X_1,\dots,X_d)^\T$ whose df $F$ is continuous and the
copula $C_F(\bfu)=F\left(F_1^{-1}(u_1),\dots,F_d^{-1}(u_d)\right)$,
$\bfu\in(0,1)^d$, corresponding to $F$ satisfies condition
\eqref{eqn:expansion_of_copula}. Now we will modify the test statistic
$T_n(c_n)$ from above to obtain a test which checks whether $C_F$ satisfies
condition \eqref{eqn:def_delta-neighborhood} with $F_i$ unknown,
$i=1,\dots,d$. We denote in what follows by
$H^{-1}(q):=\inf\set{t\in\R:\,H(t)\ge q}$, $q\in(0,1)$, the generalized
inverse of an arbitrary univariate df $H$.

Let $\bfX^{(i)}=\bigl(X_1^{(i)},\dots, X_d^{(i)}\bigr)^\T$, $i=1,\dots,n$, be
independent copies of $\bfX$ and fix $k\in\set{2,3,\dots}$. It turns out
that, contrary to \eqref{eqn:definition_of_n_j(c)}, it is too ambitious to
take all $n$ observations into account. So choose an arbitrary subset $M(n)$
of $\set{1,\dots,n}$ of size $\abs{M(n)}=m_n$, and put for $0<c<1$
\begin{align*}
  n_{j,M(n)}(c)
  &:=\sum_{i\in M(n)} 1_{(0,\infty)}\left(\sum_{r=1}^d 1_{\left(F_r^{-1}\left(1-\frac{c}{j}\right), \infty\right)}\bigl(X_r^{(i)}\bigr)\right) \\
  &\hphantom{:}= m_n - \sum_{i\in M(n)}  1_{\left(-\bfinfty, \bfgamma_j(c)\right]}\bigl(\bfX^{(i)}\bigr),
  \qquad 1\le j\le k,
\end{align*}
which is the number of all rv $\bigl(\bfX^{(i)}\bigr)_{i\in M(n)}$ exceeding
the vector
\begin{equation*}
  \bfgamma_j(c)
  := \left(F_1^{-1}\left(1-\frac{c}{j}\right),\dots,F_d^{-1}\left(1-\frac{c}{j}\right)\right)^\T
\end{equation*}
in at least one component. In Section \ref{sec:test_in_case_of_copula} we
have seen that one may choose $M(n) = \set{1,\dots,n}$ if $F$ is a copula
itself.

Since $F$ is continuous, transforming each $X_r^{(i)}$ by its df $F_r$ does
not alter the value of $n_{j,M(n)}(c)$ with probability one:
\begin{equation*}
  n_{j,M(n)}(c)
  = m_n - \sum_{i\in M(n)}  1_{\left[\bfzero, \left(1-\frac{c}{j}\right)\bfone\right]}\bigl(\bfU^{(i)}\bigr)
\end{equation*}
where $\bfU^{(i)} = \bigl(U_1^{(i)},\dots,U_d^{(i)}\bigr)^\T$,
$U_r^{(i)}:=F_r\bigl(X_r^{(i)}\bigr)$, and $\bfU^{(1)},\dots,\bfU^{(n)}$ are
iid with df $C_F$. As the margins of $F$ are typically unknown in
applications, we now replace $F_1,\dots,F_d$ with their empirical
counterparts $\hat F_{n,r}(x):=n^{-1}\sum_{i=1}^n 1_{(-\infty,x]}(X_r)$,
$x\in\R$, $1\le r\le d$, and obtain analogously
\begin{align*}
  \hat n_{j,M(n)}(c)
  &:= \sum_{i\in M(n)} 1_{(0,\infty)}\left(\sum_{r=1}^d 1_{\left(\hat F_{n,r}^{-1}\left(1-\frac{c}{j}\right),\infty\right)}\bigl(X_r^{(i)}\bigr)\right)  \\
  &\hphantom{:}= m_n - \sum_{i\in M(n)} 1_{\times_{r=1}^d\left[\vphantom{\frac{c}j}\smash{0,U_{\langle n(1-\frac{c}{j})\rangle:n,r}}\right]} \bigl(\bfU^{(i)}\bigr)
\end{align*}
with probability one. Note that
\begin{equation*} 
\hat F_{n,r}^{-1}\left(1-\frac{c}{j}\right) = X_{\neins:n,r},
\end{equation*}
where $\langle x\rangle:=\min\set{k\in\N:\, k\ge x}$ and $X_{1:n,r}\le
X_{2:n,r}\le\dots\le X_{n:n,r}$ denote the ordered values of
$X_r^{(1)},\dots, X_r^{(n)}$ for each $r=1,\dots,d$. Thus $U_{i:n,r} =
F_r\bigl(X_{i:n,r}\bigr)$ and $X_{i:n,r} = F_r^{-1}\bigl(U_{i:n,r}\bigr)$,
$1\le i\le n$, almost surely.

Since $\bfU^{(1)},\dots,\bfU^{(n)}$ are iid with df $C_F$, the distribution
of $\left(\hat n_{1,M(n)}(c),\dots,\hat n_{k,M(n)}(c)\right)^\T$ does not
depend on the marginal df $F_r$ but only on the copula $C_F$ of the
continuous df $F$. The following auxiliary result assures that we may
actually consider $\hat n_{j,M(n)}(c)$ instead of $n_{j,M(n)}(c)$.

\begin{lemma}\label{lem:crucial_approximation_of_n_j}
Suppose that $m_n\to\infty$, $m_n \log(m_n)/n\to 0$ as $n\to\infty$. Let
$c_n>0$ satisfy $c_n\to 0$, $m_nc_n\to\infty$ as $n\to \infty$. Then we
obtain for $j=1,\dots, k$
\begin{equation*}
  (m_nc_n)^{-1/2} \left(n_{j,M(n)}(c_n)-\hat n_{j,M(n)}(c_n)\right)\to_{n\to\infty} 0\quad\text{ in probability}.
\end{equation*}
\end{lemma}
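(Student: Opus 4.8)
The plan is to control the difference $n_{j,M(n)}(c_n)-\hat n_{j,M(n)}(c_n)$ by observing that it counts only those indices $i\in M(n)$ for which the vector $\bfX^{(i)}$ lies between the deterministic threshold $\bfgamma_j(c_n)$ and the random (empirical) threshold, coordinatewise. More precisely, after the marginal transformation $U_r^{(i)}=F_r(X_r^{(i)})$ (which is almost surely value-preserving since $F$ is continuous), the two counts differ only on the event that some $\bfU^{(i)}$ has an $r$-th coordinate lying in the half-open interval between $1-\tfrac{c_n}{j}$ and $U_{\langle n(1-c_n/j)\rangle:n,r}$. So the first step is to write
\begin{equation*}
  \abs{n_{j,M(n)}(c_n)-\hat n_{j,M(n)}(c_n)}
  \le \sum_{r=1}^d \sum_{i\in M(n)} 1_{\left(\min(a_{n,r},b_{n,r}),\,\max(a_{n,r},b_{n,r})\right]}\bigl(U_r^{(i)}\bigr),
\end{equation*}
where $a_{n,r}:=1-\tfrac{c_n}{j}$ and $b_{n,r}:=U_{\langle n(1-c_n/j)\rangle:n,r}$ is the relevant order statistic of the full sample $U_r^{(1)},\dots,U_r^{(n)}$.

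The second step is to bound the random interval $[\min(a_{n,r},b_{n,r}),\max(a_{n,r},b_{n,r})]$ uniformly over $r$ using the classical rate for the uniform empirical quantile process. Since $U_r^{(1)},\dots,U_r^{(n)}$ are i.i.d.\ uniform on $(0,1)$ (marginally), the Bahadur--Kiefer / Chibisov--O'Reilly type bound, or more simply the law of the iterated logarithm for empirical quantiles, gives $\abs{U_{\langle n(1-c_n/j)\rangle:n,r}-(1-c_n/j)}=O_P\bigl((c_n\log\log n/n)^{1/2}\bigr)$, and in fact an inequality of the form $\abs{b_{n,r}-a_{n,r}}\le A_n:=D\,(c_n\log(n)/n)^{1/2}$ holds simultaneously for all $r=1,\dots,d$ with probability tending to one, for a suitable constant $D$; here the $\log(n)$ (rather than $\log\log n$) is a comfortable bound that also absorbs the maximum over the finitely many coordinates $r$ and the finitely many levels $j=1,\dots,k$. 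On this high-probability event the summand above is dominated by $\sum_{r=1}^d\sum_{i\in M(n)} 1_{(1-c_n/j-A_n,\,1-c_n/j+A_n]}(U_r^{(i)})$, whose conditional expectation given the event (or rather, whose unconditional expectation after enlarging to a fixed deterministic window) is at most $d\,m_n\cdot 2A_n$.

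The third step is then a routine Markov/Chebyshev estimate: $d\,m_n A_n=O\bigl(m_n(c_n\log(n)/n)^{1/2}\bigr)$, so
\begin{equation*}
  (m_nc_n)^{-1/2}\,d\,m_n A_n
  = O\!\left(\left(\frac{m_n\log(n)}{n}\right)^{1/2}\right)\to 0
\end{equation*}
by the hypothesis $m_n\log(m_n)/n\to 0$ together with $m_n\le n$ (so $\log n$ can be traded for $\log m_n$ up to a bounded factor, or one simply uses $m_n\log n/n\to 0$, which follows since $m_n\log(m_n)/n\to0$ forces $m_n/n\to0$ fast enough). Combining the three steps, $(m_nc_n)^{-1/2}\abs{n_{j,M(n)}(c_n)-\hat n_{j,M(n)}(c_n)}\to 0$ in probability for each $j$, which is the claim.

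The main obstacle is the second step: getting a bound on the deviation of the empirical quantile $U_{\langle n(1-c_n/j)\rangle:n,r}$ from its target $1-c_n/j$ that is both uniform in $r=1,\dots,d$ and sharp enough near the endpoint $1$, since $c_n\to0$ means we are estimating a quantile in the extreme tail where the empirical quantile process is less well-behaved. The cleanest route is to invoke a known uniform-in-tail bound for the uniform empirical process — e.g.\ the inequality $\sup_{0<u\le1}\abs{\alpha_n(u)}/\sqrt{u}=O_P(\sqrt{\log n})$ for the empirical process $\alpha_n$, or directly an exponential (Dvoretzky--Kiefer--Wolfowitz or Bernstein) bound on $\abs{\hat F_{n,r}(1-c_n/j)-(1-c_n/j)}$ controlling the number of $U_r^{(i)}$ in a tail interval of length $\approx c_n$ — and then invert it to pass from the distribution-function scale to the quantile scale. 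Once that tail bound is in hand with the $(c_n/n)^{1/2}$ order (up to logarithmic factors), the rest is bookkeeping; the finiteness of $d$ and $k$ makes the uniformity over $r$ and $j$ harmless via a union bound.
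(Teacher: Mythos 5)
Your proposal is correct and follows essentially the same route as the paper: bound the discrepancy by coordinatewise counts of the $U_r^{(i)}$ falling between the deterministic quantile $1-c_n/j$ and the empirical one, control the latter's deviation from its mean by an exponential (Bernstein-type) bound for uniform order statistics, and finish with a first-moment estimate; the paper's window width uses $\log(m_n)$ in place of your $\log(n)$, which, as you note, is equivalent under the stated hypotheses. The only cosmetic difference is that you condition on a high-probability event before applying Markov's inequality, whereas the paper splits the relevant probability directly and therefore needs the order-statistic deviation probability to be $o\bigl((c_n/m_n)^{1/2}\bigr)$ rather than merely $o(1)$.
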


\begin{proof}
We have almost surely
\begin{align*}
  &n_{j,M(n)}(c_n)-\hat n_{j,M(n)}(c_n)\\
  &= \sum_{i\in M(n)}
     \left(1_{\times_{r=1}^d\left[\vphantom{\frac{c_n}{j}}\smash{0,U_{\langle n(1-\frac{c_n}{j})\rangle:n,r}}\right]} \bigl(\bfU^{(i)}\bigr)
           - 1_{\left[\bfzero, \left(1-\frac{c_n}{j}\right)\bfone\right]}\bigl(\bfU^{(i)}\bigr) \right) \\
  &= \sum_{i\in M(n)} 1_{\times_{r=1}^d\left[\vphantom{\frac{c_n}{j}}\smash{0,U_{\langle n(1-\frac{c_n}{j})\rangle:n,r}}\right]} \bigl(\bfU^{(i)}\bigr) \left(1 - 1_{\left[\bfzero, \left(1-\frac{c_n}{j}\right)\bfone\right]}\bigl(\bfU^{(i)}\bigr) \right) \\
  &\mathrel{\hphantom{=}}{} - \sum_{i\in M(n)} 1_{\left[\bfzero, \left(1-\frac{c_n}{j}\right)\bfone\right]}\bigl(\bfU^{(i)}\bigr) \left(1 - 1_{\times_{r=1}^d[0,U_{\langle n(1-\frac{c_n}{j})\rangle:n,r}]} \bigl(\bfU^{(i)}\bigr)\right) \\
 &=: R_n-T_n.
\end{align*}
In what follows we show
\begin{equation*}
\frac 1{(m_nc_n)^{1/2}} E\left(  \sum_{i\in M(n)} 1_{\left(\vphantom{\frac{c_n}{j}}\smash{1-\frac{c_n}{j},U_{\langle n(1-\frac{c_n}{j})\rangle:n,r}}\right]}\bigl(U_r^{(i)}\bigr)\right)
= o(1)
\end{equation*}
and thus $(m_nc_n)^{-1/2}R_n=o_P(1)$; note that
\begin{align*}
  R_n\le \sum_{i\in M(n)}\sum_{r=1}^d 1_{\left(\vphantom{\frac{c_n}{j}}\smash{1-\frac{c_n}{j},U_{\langle n(1-\frac{c_n}{j})\rangle:n,r}}\right]}\bigl(U_r^{(i)}\bigr).
\end{align*}

Put $\eps_n:=\delta_n c_n^{1/2}/m_n^{1/2}$ with $\delta_n :=
3(m_n\log(m_n)/n)^{1/2}$. Then we have with $\mu_n:=
\left.E\left(U_{\neinsn:n,r}\right)=\neinsn\right/(n+1)$
\begin{align*}
  &\frac{m_n^{1/2}}{c_n^{1/2}} P\left( 1-\frac{c_n}{j}< U_r^{(1)} \le  U_{\neinsn:n,r}\right)\\
  &\le \frac{m_n^{1/2}}{c_n^{1/2}} P\left(1-\frac{c_n}{j} < U_r^{(1)}\le \mu_n+\eps_n\right)+ \frac{m_n^{1/2}}{c_n^{1/2}} P\left(U_{\neinsn:n,r}- \mu_n\ge \eps_n\right),
\end{align*}
where the first term is of order $O\left((nc_n)^{-1/2}+\delta_n\right)=o(1)$;
recall that $U_r^{(1)}$ is uniformly distributed on $(0,1)$ and $\delta_n\to
0$ as $n\to\infty$. Furthermore we deduce from \citet[Lemma 3.1.1]{reiss89}
the exponential bound
\begin{equation*}
  P\left(U_{\neinsn:n,r}- \mu_n\ge \eps_n\right)\le \exp\left(- \frac{\frac n{\sigma_n^2} \eps_n^2}{3\left(1+\frac{\eps_n}{\sigma_n^2}\right)}\right),
\end{equation*}
where $\sigma_n^2:=\mu_n(1-\mu_n)$ and
\begin{align*}
\frac{\eps_n}{\sigma_n^2} &= \delta_n \frac{c_n^{1/2}}{m_n^{1/2}} \frac{n+1}{\neinsn} \frac{n+1}{n+1-\neinsn}\\
&\le \delta_n \frac{c_n^{1/2}}{m_n^{1/2}} \frac{n+1}{\neinsn} \frac{j(n+1)}{nc_n}
=O\left(\frac{\delta_n}{(m_nc_n)^{1/2}}\right)
= o(1)
\end{align*}
as well as
\begin{align*}
\frac{n}{\sigma_n^2}\eps_n^2 &= 9c_n\log(m_n) \frac{n+1}{\neinsn} \frac{n+1}{n+1-\neinsn}\\
&\ge 9\log(m_n) \frac{n+1}{\neinsn} \frac {nc_n}{1+\frac{nc_n}{j}}.
\end{align*}
as $n\to\infty$. This implies
\begin{align*}
\frac{m_n^{1/2}}{c_n^{1/2}} P\left(U_{\neinsn:n,r}- \mu_n\ge \eps_n\right)
&\le 
\frac1{(m_n c_n)^{1/2}} \exp\left(- \frac18 \log(m_n)\right)
=o(1).
\end{align*}

Repeating the above arguments shows that $(m_nc_n)^{-1/2}T_n=o_P(1)$ as well,
which completes the proof of Lemma \ref{lem:crucial_approximation_of_n_j}.
\end{proof}

The previous result suggests a modification of our test statistic in
\eqref{eqn:definition_of_test_statistic_T_n}
\begin{equation*}
  \hat T_n(c):=\frac{\sum_{j=1}^k\left(j\,\hat n_{j,M(n)}(c) - \frac 1k\sum_{\ell=1}^k \ell\,\hat n_{\ell,M(n)}(c)\right)^2}{\frac 1k\sum_{\ell=1}^k \ell\,\hat n_{\ell,M(n)}(c)}
\end{equation*}
which does not depend on the margins but only on the copula of the underlying
df $F$. The following result is a consequence of Theorem
\ref{thm:limit_distribution_of_T_n} and Lemma
\ref{lem:crucial_approximation_of_n_j}.

\begin{theorem}\label{thm:limit_distribution_of_test_statistic_general}
Suppose that the df $F$ is continuous and that its copula $C_F$ satisfies
expansion \eqref{eqn:def_delta-neighborhood} for some $\delta>0$. Let
$m_n=\abs{M(n)}$ satisfy $m_n\to\infty$, $m_n\log(m_n)/n\to 0$ as
$n\to\infty$, and let $c=c_n$ satisfy $c_n\to 0$, $m_nc_n\to\infty$,
$m_nc_n^{1+2\delta}\to 0$ as $n\to\infty$. Then we obtain
\[
\hat T_n(c_n)\to_D \sum_{i=1}^{k-1} \lambda_i\xi_i^2
\]
with $\xi_i$ and $\lambda_i$ as in Theorem
\ref{thm:limit_distribution_of_T_n}.
\end{theorem}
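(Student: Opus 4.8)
The plan is to obtain Theorem~\ref{thm:limit_distribution_of_test_statistic_general} directly from Theorem~\ref{thm:limit_distribution_of_T_n}, applied to the subsample $\bigl(\bfU^{(i)}\bigr)_{i\in M(n)}$, together with the coordinatewise approximation of Lemma~\ref{lem:crucial_approximation_of_n_j}. Denote by $\tilde T_n(c)$ the statistic obtained from \eqref{eqn:definition_of_test_statistic_T_n} upon replacing each $n_j(c)$ by $n_{j,M(n)}(c)$. Because $F$ is continuous, the rv $\bfU^{(i)}=\bigl(F_1\bigl(X_1^{(i)}\bigr),\dots,F_d\bigl(X_d^{(i)}\bigr)\bigr)^\T$, $i\in M(n)$, are independent with df $C_F$, and $n_{j,M(n)}(c)=\sum_{i\in M(n)}1_{(0,\infty)}\bigl(S_{\bfU^{(i)}}(-c/j)\bigr)$. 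Hence $\tilde T_n(c_n)$ is exactly the statistic of Section~\ref{sec:test_in_case_of_copula} built from $m_n$ copies of a rv with df $C_F$, and the assumptions $m_n\to\infty$, $c_n\to0$, $m_nc_n\to\infty$, $m_nc_n^{1+2\delta}\to0$ are precisely those of Theorem~\ref{thm:limit_distribution_of_T_n} with $n$ replaced by $m_n$. That theorem therefore yields
\[
\tilde T_n(c_n)\to_D\sum_{i=1}^{k-1}\lambda_i\xi_i^2 .
\]

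It thus remains to prove $\hat T_n(c_n)-\tilde T_n(c_n)\to 0$ in probability, and here I would simply re-run the bookkeeping in the proof of Theorem~\ref{thm:limit_distribution_of_T_n}. From that proof (applied to the subsample) one gets that $\tilde T_n(c_n)$ is asymptotically equivalent to $\tilde\bfY_n^\T\bfP_k\tilde\bfY_n$, where $\bfP_k=\bfI_k-k^{-1}\bfE_k$ and $\tilde\bfY_n=(\tilde Y_{n,1},\dots,\tilde Y_{n,k})^\T$ with $\tilde Y_{n,j}=(m_nc_nm_D)^{-1/2}\bigl(j\,n_{j,M(n)}(c_n)-m_nc_nm_D\bigr)$, which converges in distribution to $N(\bfzero,\bfSigma)$ and is in particular $O_P(1)$; moreover the denominator satisfies $(km_nc_n)^{-1}\sum_{\ell=1}^k\ell\,n_{\ell,M(n)}(c_n)\to m_D$ in probability. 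Define $\hat Y_{n,j}$ by the same formula with $\hat n_{j,M(n)}(c_n)$ in place of $n_{j,M(n)}(c_n)$. Lemma~\ref{lem:crucial_approximation_of_n_j} then gives
\[
\hat Y_{n,j}-\tilde Y_{n,j}=\frac{j}{m_D^{1/2}}\cdot\frac{\hat n_{j,M(n)}(c_n)-n_{j,M(n)}(c_n)}{(m_nc_n)^{1/2}}\to 0\quad\text{in probability},\qquad 1\le j\le k,
\]
so $\hat\bfY_n-\tilde\bfY_n\to\bfzero$ in probability, whence $\hat\bfY_n\to_D N(\bfzero,\bfSigma)$ is also $O_P(1)$ and $\hat\bfY_n^\T\bfP_k\hat\bfY_n-\tilde\bfY_n^\T\bfP_k\tilde\bfY_n=(\hat\bfY_n-\tilde\bfY_n)^\T\bfP_k(\hat\bfY_n+\tilde\bfY_n)\to0$ in probability. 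The same estimate, using $\hat n_{j,M(n)}(c_n)-n_{j,M(n)}(c_n)=o_P\bigl((m_nc_n)^{1/2}\bigr)$, gives $(km_nc_n)^{-1}\sum_{\ell=1}^k\ell\,\hat n_{\ell,M(n)}(c_n)\to m_D>0$ in probability, so with probability tending to one this denominator is positive. A final application of Slutsky's lemma yields $\hat T_n(c_n)=\hat\bfY_n^\T\bfP_k\hat\bfY_n+o_P(1)$, and therefore $\hat T_n(c_n)\to_D\sum_{i=1}^{k-1}\lambda_i\xi_i^2$ as asserted.

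The substantial work has already been done: Lemma~\ref{lem:crucial_approximation_of_n_j} provides the exponential control on the order statistic $U_{\langle n(1-c_n/j)\rangle:n,r}$ needed to compare $\hat n_{j,M(n)}(c_n)$ with $n_{j,M(n)}(c_n)$, and passing from the single-coordinate statement to the full vector is immediate since $k$ is fixed. The only point requiring a little care is that $\hat T_n(c_n)$ is a ratio, so one must verify separately that its normalized denominator converges in probability to the strictly positive constant $m_D$ (to rule out degeneracy); beyond that, the argument is a routine Slutsky and continuous-mapping passage once the coordinatewise approximation of Lemma~\ref{lem:crucial_approximation_of_n_j} is available.
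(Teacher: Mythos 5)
Your argument is correct and is exactly the route the paper intends: it states Theorem~\ref{thm:limit_distribution_of_test_statistic_general} as ``a consequence of Theorem~\ref{thm:limit_distribution_of_T_n} and Lemma~\ref{lem:crucial_approximation_of_n_j}'' without writing out the details, and your combination --- apply Theorem~\ref{thm:limit_distribution_of_T_n} to the $m_n$ iid copies indexed by $M(n)$, then transfer the limit from $n_{j,M(n)}(c_n)$ to $\hat n_{j,M(n)}(c_n)$ via the $o_P\bigl((m_nc_n)^{1/2}\bigr)$ bound of the lemma and Slutsky --- is precisely that consequence, with the numerator and denominator both handled correctly.
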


The condition $m_nc_n^{1+2\delta}\to 0$ can again be dropped if the copula $C_F$ is a GPC.

\section{Testing for $\delta$-neighborhoods of a GPCP}\label{sec:test_GPCP}

In this section we carry the results of Section \ref{sec:test_multivariate}
over to function space, namely the space $C[0,1]$ of continuous functions on
$[0,1]$. A stochastic process $\bfeta=(\eta_t)_{t\in[0,1]}$ with sample paths
in $C[0,1]$ is called a \emph{standard max-stable process} (SMSP), if
$P(\eta_t\le x)=\exp(x)$, $x\le 0$, for each $t\in[0,1]$, and if the
distribution of $n\max_{1\le i\le n}\bfeta^{(i)}$ equals that of $\bfeta$ for
each $n\in\N$, where $\bfeta^{(1)},\bfeta^{(2)},\dots$ are independent copies
of $\bfeta$. All operations on functions such as $\max$, $+$, $/$ etc. are
meant pointwise. To improve the readability we set stochastic processes such
as $\bfeta$ or $\bfZ$ in bold font and deterministic functions like $f$ in
default font.

From \citet{ginhv90} --- see also \citet{aulfaho11} as well as \citet{hofm12}
--- we know that a stochastic process $\bfeta\in C[0,1]$ is an SMSP iff there exists a \emph{generator process} $\bfZ=(Z_t)_{t\in[0,1]}\in
C[0,1]$ with $0\le Z_t\le q$, $t\in[0,1]$, for some $q\in\R$, and $E(Z_t)=1$,
such that
\begin{equation*}
  P(\bfeta\le f)
  = \exp\left(-E\left(\sup_{t\in[0,1]}\left(\abs{f(t)}Z_t\right)\right)\right),\qquad f\in E^-[0,1].
\end{equation*}
By $E[0,1]$ we denote the set of those functions $f:[0,1]\to\R$, which are
bounded and have only a finite number of discontinuities; $E^-[0,1]$ is the
subset of those functions in $E[0,1]$ that attain only non positive values.
Note that
\[
\norm f_D:= E\left(\sup_{t\in[0,1]}\left(\abs{f(t)}Z_t\right)\right),\qquad f\in E[0,1],
\]
defines a norm on $E[0,1]$.

Let $\bfU=(U_t)_{t\in[0,1]}$ be a \emph{copula process}, i.e., each component
$U_t$ is uniformly distributed on $(0,1)$. A copula process $\bfU\in C[0,1]$
is said to be in the \emph{functional max-domain attraction} of an SMSP
$\bfeta$, denoted by $\bfU\in\mathcal D(\bfeta)$, if
\begin{equation}\label{def:functional_max-domain_of_attraction}
  P\left(n(\bfU-1_{[0,1]})\le f\right)^n\to_{n\to\infty} P(\bfeta\le f)=\exp\left(-\norm f_D\right),\qquad f\in E[0,1],
\end{equation}
where $1_{[0,1]}$ denotes the indicator function of the interval $[0,1]$.
This is the functional version of \eqref{def:domain_of_attraction}. We refer
to \citet{aulfaho11} for details. A more restrictive definition of functional
max-domain of attraction of stochastic processes in terms of usual weak
convergence was introduced by \citet{dehal01}.

From \citet[Proposition 8]{aulfaho11} we know that condition
\eqref{def:functional_max-domain_of_attraction} is equivalent with the
expansion
\begin{equation*} 
  P\left(\bfU\le 1_{[0,1]}+cf\right)=1- c\norm f_D+o(c), \qquad f\in E^-[0,1],
\end{equation*}
as $c\downarrow 0$. In particular we obtain in this case
\begin{equation}\label{eqn:expansion_of_df_of_copula_process}
  P\left(\bfU\le (1-c)1_{[0,1]}\right)=1-c\left(m_D+r(-c)\right),
\end{equation}
where $m_D:=E\left(\sup_{t\in[0,1]}Z_t\right) = \norm{1_{[0,1]}}_D$ is the
uniquely determined \emph{generator constant} pertaining to $\bfeta$, and the
remainder term satisfies $r(-c)\to 0$ as $c\downarrow 0$.

A copula process $\bfV\in C[0,1]$ is a \emph{generalized Pareto copula
process} (GPCP), if there exists $\eps_0>0$ such that
\begin{equation*} 
  P\left(\bfV\le 1_{[0,1]}+f\right)=1- \norm f_D, \qquad f\in E^-[0,1],\,\norm f_\infty\le \eps_0.
\end{equation*}
A GPCP with a prescribed $D$-norm $\norm\cdot_D$ can easily be constructed;
cf. \citet[Example 5]{aulfaho11}. Its characteristic property is its
excursion stability; cf. \citet{dehaf06}. A copula process $\bfU\in C[0,1]$,
consequently, satisfies $\bfU\in\mathcal D(\bfeta)$ iff there exists a GPCP
$\bfV$ such that
\begin{equation}\label{eqn:equivalent_condition_functional_max-domain_attraction_via_GPCP}
P\left(\bfU\le 1_{[0,1]}+cf\right)=P\left(\bfV\le 1_{[0,1]}+cf\right)+o(c), \qquad f\in E^-[0,1],
\end{equation}
as $c\downarrow 0$. If the remainder term $o(c)$ in expansion
\eqref{eqn:equivalent_condition_functional_max-domain_attraction_via_GPCP} is
in fact of order $O(c^{1+\delta})$ for some $\delta>0$, then the copula
process $\bfU\in C[0,1]$ is said to be in the \emph{$\delta$-neighborhood} of
a GPCP; cf. \eqref{eqn:def_delta-neighborhood}.

\subsection{Observing copula processes}

The test statistic $T_n(c_n)$ investigated in Section
\ref{sec:test_in_case_of_copula} carries over to function space $C[0,1]$,
which enables us to check whether a given copula process
$\bfU=(U_t)_{t\in[0,1]}$ is in a $\delta$-neighborhood of an SMSP $\bfV$. Put
for $s<0$
\begin{equation*}
S_{\bfU}(s):=\int_0^1 1_{(s,\infty)}\left(U_t-1\right)\,dt\in[0,1],
\end{equation*}
which is the sojourn time that the process $\bfU$ spends above the threshold $1+s$. If $\bfU\in\mathcal D(\bfeta)$, then we obtain from equation \eqref{eqn:expansion_of_df_of_copula_process}
\begin{align*}\label{eqn:expansion_of_binomial_probability_functional_case}
P\left(S_{\bfU}(s)>0\right)&=1- P\left(S_{\bfU}(s)=0\right)\nonumber\\
&=1-P\left(\bfU\le (1+s)1_{[0,1]}\right)\nonumber\\
&=\abs s\left(m_D+ r(s)\right).
\end{align*}

Choose again $k\in\N$, $k\ge2$, and put for $j=1,\dots,k$ and $c>0$
\begin{equation*}
  n_j(c):=\sum_{i=1}^n 1_{(0,1]}\left(S_{\bfU^{(i)}}\left(-\frac{c}j\right)\right)
\end{equation*}
where $\bfU^{(1)},\dots,\bfU^{(n)}$ are independent copies of $\bfU$. Then
$n_j(c)$ is the number of those processes among
$\bfU^{(1)},\dots,\bfU^{(n)}$, which exceed the threshold $1-\frac{c}j$ in at
least one point.

If $\bfU\in\mathcal D(\bfeta)$, then each $n_j(c)$ is binomial $B(n,p_j(c))$ distributed with
\begin{equation*}
p_j(c)=P\left(S_{\bfU}\left(-\frac{c}j\right)>0\right) = \frac{c}j\left(m_D + r\left(-\frac{c}j\right)\right).
\end{equation*}
Put again
\begin{equation} \label{eqn:test_statistic_copula_process}
  T_n(c):= \frac{\sum_{j=1}^k\left(j\,n_j(c) - \frac 1k\sum_{\ell=1}^k \ell\,n_\ell(c)\right)^2}{\frac 1k\sum_{\ell=1}^k \ell\,n_\ell(c)}.
\end{equation}
Repeating the arguments in the proof of Theorem \ref{thm:limit_distribution_of_T_n}, one shows that its assertion carries over to the functional space as well.

\begin{theorem}\label{thm:limit_distribution_of_T_n_for_functional_data}
Suppose that the copula process $\bfU\in C[0,1]$ is in the $\delta$-neighborhood of
a GPCP for some $\delta>0$. In this case the remainder term $r(s)$  in expansion
\eqref{eqn:expansion_of_df_of_copula_process} is
of order $O\bigl(\abs s^\delta\bigr)$ as $s\to 0$. Let
$c=c_n$ satisfy $c_n\to 0$, $nc_n\to\infty$ and $nc_n^{1+2\delta}\to 0$ as
$n\to\infty$. Then we obtain
\[
T_n(c_n)\to_D  \sum_{i=1}^{k-1} \lambda_i \xi_i^2,
\]
with $\xi_i$ and $\lambda_i$ as in Theorem \ref{thm:limit_distribution_of_T_n}.
\end{theorem}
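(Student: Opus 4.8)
The plan is to mirror the proof of Theorem~\ref{thm:limit_distribution_of_T_n} almost verbatim, since the only structural change is that the sojourn time $S_{\bfU}(s)$ is now a continuous functional $\int_0^1 1_{(s,\infty)}(U_t-1)\,dt\in[0,1]$ rather than a discrete count, but this plays no role in the argument: what is actually used is only the Bernoulli structure of $1_{(0,1]}(S_{\bfU^{(i)}}(-c/j))$ and the probability $p_j(c)$. First I would note that the hypothesis that $\bfU$ is in the $\delta$-neighborhood of a GPCP forces the remainder term $r(s)$ in \eqref{eqn:expansion_of_df_of_copula_process} to be $O(\abs s^\delta)$ as $s\to 0$; this follows by specializing \eqref{eqn:equivalent_condition_functional_max-domain_attraction_via_GPCP} with its $O(c^{1+\delta})$ remainder to the constant function $f=-1_{[0,1]}$ and using that a GPCP satisfies $P(\bfV\le(1-c)1_{[0,1]})=1-cm_D$ for $c\le\eps_0$. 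Hence $p_j(c_n)=\frac{c_n}{j}m_D+O(c_n^{1+\delta})$ for each $j$.

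Next I would apply Lindeberg's central limit theorem to each $n_j(c_n)\sim B(n,p_j(c_n))$: since $np_j(c_n)=nc_n\frac{m_D}{j}(1+o(1))\to\infty$ and, crucially, the bias term $n(p_j(c_n)-\frac{c_n}{j}m_D)/(nc_n)^{1/2}=O\bigl((nc_n^{1+2\delta})^{1/2}\bigr)\to 0$ by the assumption $nc_n^{1+2\delta}\to 0$, we get
\[
\frac{1}{(nc_n)^{1/2}}\bigl(j\,n_j(c_n)-nc_nm_D\bigr)\to_D N(0,j\,m_D),
\]
and in particular $j\,n_j(c_n)/(nc_n)\to m_D$ in probability for each $j$, so the denominator $\frac1k\sum_\ell \ell\,n_\ell(c_n)$ of $T_n(c_n)$ may be replaced asymptotically by $nc_nm_D$. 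Then I would set $Y_{n,j}:=(nc_nm_D)^{-1/2}(j\,n_j(c_n)-nc_nm_D)$ and invoke the Cramér--Wold device together with Lindeberg to get $\bfY_n\to_D N(\bfzero,\bfSigma)$; the covariance computation is identical to the one in the proof of Theorem~\ref{thm:limit_distribution_of_T_n}, resting only on the nesting identity $1_{(0,1]}(S_{\bfU}(-c_n/i))\,1_{(0,1]}(S_{\bfU}(-c_n/j))=1_{(0,1]}(S_{\bfU}(-c_n/\max(i,j)))$, which holds because $S_{\bfU}(s)$ is nondecreasing in $s$, so $\sigma_{ij}=\min(i,j)$ and $\bfSigma=\bfM_k\bfM_k^\T$ exactly as before.

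Finally I would conclude as in the earlier proof: $T_n(c_n)\to_D \bfxi^\T\bfM_k^\T(\bfI_k-\frac1k\bfE_k)\bfM_k\bfxi$ for a standard normal $\bfxi\in\R^k$, and the known eigenvalue computation for $k\,\bfM_k^\T(\bfI_k-\frac1k\bfE_k)\bfM_k=\bigl(k\min(i-1,j-1)-(i-1)(j-1)\bigr)_{i,j}$ gives eigenvalues $k\lambda_j=k/(4\sin^2(\frac{j}{k}\frac\pi2))$ for $j=1,\dots,k-1$ and $\lambda_k=0$, whence $T_n(c_n)\to_D\sum_{i=1}^{k-1}\lambda_i\xi_i^2$. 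There is essentially no obstacle here beyond bookkeeping; the one point that warrants a line of care is checking the Lindeberg condition uniformly for the triangular array of indicators $1_{(0,1]}(S_{\bfU^{(i)}}(-c_n/j))$, which is immediate since these are bounded by $1$ and $nc_n\to\infty$ makes the normalizing variance tend to infinity. Accordingly the proof is just a reference to the proof of Theorem~\ref{thm:limit_distribution_of_T_n} with the above substitutions, which is exactly what the sentence preceding the statement announces.
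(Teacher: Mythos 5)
Your proposal is correct and matches the paper's approach exactly: the paper's entire proof of this theorem is the single sentence preceding it, asserting that the arguments of Theorem \ref{thm:limit_distribution_of_T_n} carry over, and your write-up supplies precisely those details (the $O(\abs{s}^\delta)$ remainder extracted from the $\delta$-neighborhood condition via $f=-1_{[0,1]}$, the bias control via $nc_n^{1+2\delta}\to0$, the $\min(i,j)$ covariance from the nesting of the exceedance events, and the eigenvalue computation). One small slip: $S_{\bfU}(s)$ is non\emph{increasing} in $s$, not nondecreasing, though the nesting identity you derive from it is the correct one and the argument is unaffected.
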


\subsection{The case of more general processes}\label{sec:general_processes}

In what follows we will extend Theorem
\ref{thm:limit_distribution_of_T_n_for_functional_data} to the case when
observing the underlying copula process is subject to a certain kind of
nuisance. Let $\bfX=(X_t)_{t\in[0,1]}\in C[0,1]$ be a stochastic process with
identical continuous univariate marginal df, i.e., $F(x):=P(X_0\le
x)=P(X_t\le x)$, $t\in[0,1]$, is a continuous function in $x\in\R$. $\bfX$ is
said to be in the functional max-domain of attraction of a max-stable process
$\bfxi=(\xi_t)_{t\in[0,1]}$, if the copula process
$\bfU=(F(X_t))_{t\in[0,1]}$ satisfies $\bfU\in\mathcal D(\bfeta)$, where
$\bfeta$ is a SMSP, and the df $F$ satisfies the univariate extreme value
condition; for the univariate case we refer to \citet[Section 2.1]{fahure10},
among others.

Let $\bfX^{(1)},\dots,\bfX^{(n)}$ be independent copies of the process $\bfX$
and denote the sample df pertaining to the univariate iid observations
$X_0^{(1)},\dots,X_0^{(n)}$ by $\hat F_n(x):=n^{-1}\sum_{i=1}^n
1_{(-\infty,x]}\bigl(X_0^{(i)}\bigr)$, $x\in\R$. As in Section
\ref{sec:arbitrary_random_vector}, fix $k\in\set{2,3,\dots}$, choose an
arbitrary subset $M(n)$ of $\set{1,\dots,n}$ of size $m(n)=\abs{M(n)}$, and
put for $c>0$
\begin{align*}
  n_{j,M(n)}(c)
  &:=\sum_{i\in M(n)}1_{(0,1]}\left(\int_0^1 1_{(\gamma(c),\infty)}\bigl(X_t^{(i)}\bigr)\,dt \right) \\
  &\hphantom{:}= \sum_{i\in M(n)} 1_{[0,1)}\left(\int_0^1 1_{(-\infty,\gamma(c)]}\bigl(X_t^{(i)}\bigr)\,dt \right) \\
  &\hphantom{:}= \sum_{i\in M(n)} 1_{[0,1)}\left(\int_0^1 1_{\left[0,1-\frac{c}{j}\right]}\bigl(U_t^{(i)}\bigr)\,dt \right)\\
  &= \sum_{i\in M(n)} 1_{\set{\bfU^{(i)}\nleq (1-\frac{c}{j})1_{[0,1]}}}
\end{align*}
where $\gamma(c) := F^{-1}\bigl(1-\frac{c}{j}\bigr)$ and the next to last
equation holds almost surely. Again, we replace the marginal df $F$ with its
empirical counterpart and obtain analogously with $\hat\gamma_n(c) := \hat
F_n^{-1}\bigl(1-\frac{c}{j}\bigr)$
\begin{equation*}
  \hat n_{j,M(n)}(c)
  := \sum_{i\in M(n)} 1_{(0,1]}\left(\int_0^1 1_{(\hat\gamma_n(c),\infty)}\bigl(X_t^{(i)}\bigr)\,dt \right)
\end{equation*}
Thus the rv $\hat n_j(c)$ is the total number of processes
$\bfX^{(i)}=\bigl(X^{(i)}_t\bigr)_{t\in[0,1]}$ among
$\bfX^{(1)},\dots,\bfX^{(i)}$, which exceed the random threshold $\hat
F_n^{-1}\bigl(1-\frac{c}{j}\bigr)$ for some $t\in[0,1]$. Note that the
distribution of the rv $\left(\hat n_1(c),\dots,\hat n_k(c)\right)^{\T}$ does
not depend on $F$ but on the copula process $\bfU$ since
\begin{align*}
  \hat n_{j,M(n)}(c)
  &\hphantom{:}= \sum_{i\in M(n)} 1_{[0,1)}\left(\int_0^1 1_{(-\infty,\hat\gamma_n(c)]}\bigl(X_t^{(i)}\bigr)\,dt \right) \\
  &\hphantom{:}= \sum_{i\in M(n)} 1_{[0,1)}\left(\int_0^1 1_{[0,U_{\langle n(1-\frac{c}{j})\rangle:n}]}\bigl(U_t^{(i)}\bigr)\,dt \right) \\
  &\hphantom{:}= \sum_{i\in M(n)} 1_{\bigl\{\bfU^{(i)}\nleq U_{\langle n(1-\frac{c}{j})\rangle:n}1_{[0,1]}\bigr\}}
\end{align*}
with probability one, where $U_{1:n}\le\dots\le U_{n:n}$ denote the ordered
values of $U_0^{(1)},\dots,U_0^{(n)}$. The following auxiliary result is the
extension of Lemma \ref{lem:crucial_approximation_of_n_j} to function space.

\begin{lemma}\label{lem:crucial_approximation_function_space}
Suppose that the copula process $\bfU\in C[0,1]$ corresponding to $\bfX$ is
in the $\delta$-neighborhood of a GPCP for some $\delta>0$. In this case the
remainder term $r(s)$ in expansion
\eqref{eqn:expansion_of_df_of_copula_process} is of order $O(\abs{s}^\delta)$
as $s\uparrow 0$. Choose $M(n)\subset\set{1,\dots,n}$ and $c_n>0$ such that
$m_n\to\infty$, $m_n\log(m_n) /n\to 0$, $c_n\to 0$, $m_nc_n\to\infty$ and
$m_nc_n^{1+2\delta}\to 0$ as $n\to \infty$.  Then we obtain for $j=1,\dots,
k$
\begin{equation*}
(m_nc_n)^{-1/2}\left(n_{j,M(n)}(c_n)-\hat n_{j,M(n)}(c_n)\right)\to_{n\to\infty}0\quad\text{in probability}.
\end{equation*}
\end{lemma}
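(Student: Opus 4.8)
The plan is to mirror the proof of Lemma \ref{lem:crucial_approximation_of_n_j}, replacing the finite-dimensional exceedance events with their functional counterparts. Writing $q_n:=\langle n(1-c_n/j)\rangle$, I would first decompose
\[
n_{j,M(n)}(c_n)-\hat n_{j,M(n)}(c_n)
= \sum_{i\in M(n)}\Bigl(1_{\{\bfU^{(i)}\nleq U_{q_n:n}1_{[0,1]}\}} - 1_{\{\bfU^{(i)}\nleq (1-\frac{c_n}{j})1_{[0,1]}\}}\Bigr),
\]
and split this into $R_n-T_n$ exactly as before, where $R_n$ collects the indices $i$ for which $\bfU^{(i)}$ exceeds the random threshold $U_{q_n:n}1_{[0,1]}$ but not the deterministic one $(1-c_n/j)1_{[0,1]}$, and $T_n$ the symmetric difference in the other direction. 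The key observation is that each summand in $R_n$ is bounded: if $\bfU^{(i)}$ exceeds one threshold level but not the other, then the process $\bfU^{(i)}$ must take a value in the slab between $1-c_n/j$ and $U_{q_n:n}$ at some point $t\in[0,1]$. Since $U_t^{(i)}$ is uniformly distributed on $(0,1)$ for each fixed $t$, a union-type bound over the (random, but a.s.\ finite or handled via continuity) set of relevant $t$ does not immediately work; instead I would use the cruder estimate
\[
R_n \le \sum_{i\in M(n)} 1_{\bigl(1-\frac{c_n}{j},\,U_{q_n:n}\bigr]}\bigl(\sup_{t\in[0,1]}U_t^{(i)}\bigr),
\]
together with the fact that $\sup_t U_t^{(i)}$ has a continuous df near $1$ under the $\delta$-neighborhood assumption — or, more robustly, bound $R_n$ by the number of $i$ for which $\bfU^{(i)}$ exceeds $(1-c_n/j)1_{[0,1]}$ minus those exceeding $U_{q_n:n}1_{[0,1]}$, on the event $\{U_{q_n:n}\ge 1-c_n/j\}$, and symmetrically on its complement, so that in either case $R_n$ is dominated by the count of exceedances in a thin annular region whose expected size is controlled through the $\delta$-neighborhood expansion of $P(\bfU\le(1-c)1_{[0,1]})$.

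The main technical input is then the same as in Lemma \ref{lem:crucial_approximation_of_n_j}: the order statistic $U_{q_n:n}$ concentrates around its mean $\mu_n=q_n/(n+1)$, and via \citet[Lemma 3.1.1]{reiss89} one gets an exponential bound for $P(|U_{q_n:n}-\mu_n|\ge\eps_n)$ with $\eps_n=\delta_n c_n^{1/2}/m_n^{1/2}$, $\delta_n=3(m_n\log(m_n)/n)^{1/2}$. On the complement of the deviation event, $U_{q_n:n}$ lies within $\eps_n$ of $\mu_n$, which is itself within $O(1/n)$ of $1-c_n/j$; hence the annular region has width $O(\eps_n+1/n)$ and, because the marginal expansion \eqref{eqn:expansion_of_df_of_copula_process} of $\bfU$ (with remainder $O(|s|^\delta)$, guaranteed by the $\delta$-neighborhood hypothesis) gives $P(\bfU\le(1-c)1_{[0,1]}) = 1-c(m_D+O(c^\delta))$, the probability that a single $\bfU^{(i)}$ lands in such an annulus is $O(\eps_n+1/n)$. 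Multiplying by $m_n$ and dividing by $(m_nc_n)^{1/2}$ yields a bound of order $O(\delta_n/(m_nc_n)^{1/2} \cdot (m_nc_n)^{1/2}) = O(\delta_n) + O(m_n/(n(m_nc_n)^{1/2}))$, both of which tend to $0$ under the stated hypotheses; the exponential-bound contribution is negligible by the same estimate $\exp(-\tfrac18\log m_n)$ as before. The argument for $T_n$ is identical after swapping the roles of the two thresholds.

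The step I expect to be the main obstacle is the transition from finite dimensions to $C[0,1]$ in controlling $R_n$: in the multivariate case one simply sums over the $d$ coordinates, whereas here the "coordinate set" $[0,1]$ is a continuum, so one cannot bound the event $\{\bfU^{(i)} \nleq \theta 1_{[0,1]}\}\setminus\{\bfU^{(i)}\nleq \theta' 1_{[0,1]}\}$ by a finite union of one-dimensional slab events. The cleanest fix is to avoid a union bound entirely and instead express $R_n$ (on the high-probability event that orders the two thresholds) as a difference of two exceedance counters for the same process at two nearby levels, and bound its expectation directly via the difference $P(\bfU\le\theta' 1_{[0,1]}) - P(\bfU\le\theta 1_{[0,1]})$, which the $\delta$-neighborhood expansion controls to order $|\theta-\theta'|$ uniformly; this requires a short conditioning argument on $U_{q_n:n}$ (independence of the order statistic from the within-process structure of the other $\bfU^{(i)}$ is not available, so one conditions on the full ordered sample of $U_0$-values and uses that the remaining randomness of $\bfU^{(i)}$ given $U_0^{(i)}$ still has uniform one-dimensional margins). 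Once that conditioning is set up correctly, the remaining computations are routine and parallel the proof of Lemma \ref{lem:crucial_approximation_of_n_j} line by line.
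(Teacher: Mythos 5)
Your proposal is correct and follows essentially the same route as the paper's proof: the same $R_n-T_n$ decomposition, the same replacement of the random threshold $U_{\langle n(1-c_n/j)\rangle:n}$ by the deterministic levels $\mu_n\pm\eps_n$ at the cost of the exponential deviation bound from \citet[Lemma 3.1.1]{reiss89}, and the same use of expansion \eqref{eqn:expansion_of_df_of_copula_process} to bound the probability of the resulting thin annulus --- exactly the ``difference of two exceedance counters at nearby levels'' idea you describe, since $\set{\bfU^{(i)}\nleq\theta 1_{[0,1]}}$ is the one-dimensional event $\set{\sup_{t\in[0,1]}U_t^{(i)}>\theta}$. Two small refinements: the conditioning argument you anticipate is unnecessary, because the sandwich $\set{\bfU^{(1)}\le U_{q_n:n}1_{[0,1]}}\subset\set{\bfU^{(1)}\le(\mu_n+\eps_n)1_{[0,1]}}\cup\set{U_{q_n:n}>\mu_n+\eps_n}$ is a pure set inclusion valid despite the dependence between $U_{q_n:n}$ and $\bfU^{(1)}$; and the annulus probability is $O(\eps_n+1/n)+O\bigl(c_n^{1+\delta}\bigr)$ rather than $O(\eps_n+1/n)$, since the remainder terms at the two nearby levels need not cancel --- after normalization this extra term contributes $\bigl(m_nc_n^{1+2\delta}\bigr)^{1/2}$, which is precisely where the hypothesis $m_nc_n^{1+2\delta}\to0$ enters.
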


\begin{proof}
We have with probability one
\begin{align*}
  &n_{j,M(n)}(c_n)-\hat n_{j,M(n)}(c_n) \\
  &= \sum_{i\in M(n)}\left(
        1_{\set{\bfU^{(i)}\nleq (1-\frac{c_n}{j})1_{[0,1]}}}
        - 1_{\bigl\{\bfU^{(i)}\nleq U_{\langle n(1-\frac{c_n}{j})\rangle:n}1_{[0,1]}\bigr\}}
     \right) \\
  &= \sum_{i\in M(n)} 1_{\bigl\{\bfU^{(i)}\nleq (1-\frac{c_n}{j})1_{[0,1]},\ \bfU^{(i)}\le U_{\langle n(1-\frac{c_n}{j})\rangle:n}1_{[0,1]}\bigr\}} \\
  &\mathrel{\hphantom{=}}{} - \sum_{i\in M(n)} 1_{\bigl\{\bfU^{(i)}\nleq U_{\langle n(1-\frac{c_n}{j})\rangle:n}1_{[0,1]},\ \bfU^{(i)}\le (1-\frac{c_n}{j})1_{[0,1]}\bigr\}} \\
  &=: R_n-T_n.
\end{align*}
We show in what follows that
\begin{equation} \label{eqn:expectation_of_R_n}
  \frac 1{(m_nc_n)^{1/2}} E(R_n)
  \to_{n\to\infty} 0,
\end{equation}
proceeding as in the proof of Lemma \ref{lem:crucial_approximation_of_n_j}:
Put $\eps_n:=\delta_n c_n^{1/2}/m_n^{1/2}$ with $\delta_n :=
3(m_n\log(m_n)/n)^{1/2}$. Note that $\mu_n:=
E\left(U_{\neinsn:n,r}\right)=\left.\neinsn\right/(n+1)$ satisfies
\begin{equation*}
  1-\mu_n-\eps_n
  \ge \frac{n}{n+1} \frac{c_n}{j} - \eps_n
  = \frac{c_n^{1/2}}{m_n^{1/2}} \left(\frac{n}{n+1}\frac{(m_nc_n)^{1/2}}{j} - \delta_n\right) > 0
\end{equation*}
for large values of $n$ as well as
\begin{equation*}
  1-\mu_n+\eps_n
  \le \frac1{n+1} + \frac{n}{n+1}\frac{c_n}{j} + \eps_n
  = \frac{c_n}{j} + \eps_n + \frac1{n+1}\left(1-\frac{c_n}{j}\right)
  = O(c_n)
\end{equation*}
Now we obtain by expansion \eqref{eqn:expansion_of_df_of_copula_process}, if
$n$ is sufficiently large,
\begin{align*}
  &P\left( \bfU^{(1)}\nleq \left(1-\frac{c_n}{j}\right)1_{[0,1]},\ \bfU^{(1)}\le U_{\langle n(1-\frac{c_n}{j})\rangle:n}1_{[0,1]} \right) \\
  &= P\left( \bfU^{(1)}\le U_{\langle n(1-\frac{c_n}{j})\rangle:n}1_{[0,1]} \right)
     - P\Bigl( \bfU^{(1)}\le \min\Bigl\{1-\frac{c_n}{j}, U_{\langle n(1-\frac{c_n}{j})\rangle:n}\Bigr\}1_{[0,1]} \Bigr) \\
  &\le P\left( \bfU^{(1)}\le (\mu_n + \eps_n)1_{[0,1]} \right)
     + P\left( U_{\langle n(1-\frac{c_n}{j})\rangle:n} \ge \mu_n+\eps_n\right) \\
  &\mathrel{\hphantom{\le}}{}
     - P\Bigl( \bfU^{(1)}\le \min\Bigl\{1-\frac{c_n}{j}, \mu_n-\eps_n\Bigr\}1_{[0,1]} \Bigr)
     + P\left(U_{\langle n(1-\frac{c_n}{j})\rangle:n} \le \mu_n-\eps_n\right) \\
  &= P\left(\abs{U_{\langle n(1-\frac{c_n}{j})\rangle:n} - \mu_n}\ge\eps_n\right)
     - (1-\mu_n-\eps_n)\left(m_D+r(\mu_n+\eps_n-1)\right) \\
  &\mathrel{\hphantom{=}}{}
     + \max\Bigl\{\frac{c_n}{j}, 1-\mu_n+\eps_n\Bigr\} \left(m_D + r\left(-\max\Bigl\{\frac{c_n}{j}, 1-\mu_n+\eps_n\Bigr\}\right)\right) \\
  &\le P\left(\abs{U_{\langle n(1-\frac{c_n}{j})\rangle:n} - \mu_n}\ge\eps_n\right)
     - \left(\frac{n}{n+1} \frac{c_n}{j} - \eps_n\right)\left(m_D + O\left(c_n^\delta\right)\right) \\
  &\mathrel{\hphantom{=}}{}
     + \left(\frac1{n+1} + \frac{n}{n+1}\frac{c_n}{j} + \eps_n\right) \left(m_D + O\left(c_n^\delta\right)\right) \\
  &= P\left(\abs{U_{\langle n(1-\frac{c_n}{j})\rangle:n} - \mu_n}\ge\eps_n\right)
     + O\left(c_n^{1+\delta}\right) + O\left(\frac1n + \eps_n\right).
\end{align*}
The arguments in the proof of Lemma \ref{lem:crucial_approximation_of_n_j}
show
\begin{equation*}
  \frac{m_n^{1/2}}{c_n^{1/2}} P\left(\abs{U_{\langle n(1-\frac{c_n}{j})\rangle:n} - \mu_n}\ge\eps_n\right) = o(1)
\end{equation*}
as $n\to\infty$ and, thus, \eqref{eqn:expectation_of_R_n}; recall
$m_nc_n^{1+2\delta} = o(1)$ and note that
\begin{equation*}
  \frac{m_n^{1/2}}{c_n^{1/2}} \left(\frac1n + \eps_n\right)
  = \frac{m_n^{1/2}}{n^{1/2}} \frac1{(nc_n)^{1/2}} + \delta_n
  = o(1).
\end{equation*}

Repeating the above arguments one shows that  $E(T_n)=o((m_nc_n)^{1/2})$ as
$n\to\infty$ as well, which completes the proof of Lemma
\ref{lem:crucial_approximation_function_space}.
\end{proof}

Analogously to Section \ref{sec:test_multivariate} we now choose
$M(n)\subset\set{1,\dots,n}$ and replace $n_j(c)$ in
\eqref{eqn:test_statistic_copula_process} with $\hat n_{j,M(n)}(c)$ and
obtain
\begin{equation*}
  \hat T_n(c)
  := \frac{\sum_{j=1}^k \left(j\,\hat n_{j,M(n)}(c)-\frac 1k \sum_{\ell=1}^k \ell\,\hat n_{\ell,M(n)}(c)\right)^2} {\frac1k \sum_{\ell=1}^k \ell\,\hat n_{\ell,M(n)}(c)}.
\end{equation*}
By this statistic we can in particular check, whether the copula process
$\bfU=(F(X_t))_{t\in[0,1]}$ pertaining to $\bfX$ is in a $\delta$-neigborhood
of some GPCP $\bfV$. Its distribution does not depend on the marginal df $F$
of $\bfX$ but on the copula process $\bfU$. The next result follows from
Lemma \ref{lem:crucial_approximation_function_space} and the arguments in the
proof of Theorem \ref{thm:limit_distribution_of_T_n}.

\begin{theorem}
We have under the conditions of Lemma \ref{lem:crucial_approximation_function_space}
\begin{equation*}
\hat T_n(c_n)\to_D  \sum_{i=1}^{k-1} \lambda_i \xi_i^2,
\end{equation*}
with $\xi_i$ and $\lambda_i$ as in Theorem \ref{thm:limit_distribution_of_T_n}.
\end{theorem}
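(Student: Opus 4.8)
The plan is to establish this result — the functional analogue of Theorem~\ref{thm:limit_distribution_of_test_statistic_general} — by reducing $\hat T_n(c_n)$ to the statistic built from the true-threshold counts $n_{j,M(n)}(c_n)$ via Lemma~\ref{lem:crucial_approximation_function_space}, and then repeating the argument in the proof of Theorem~\ref{thm:limit_distribution_of_T_n} with the sample size $n$ replaced by the subsample size $m_n$.

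First I would record that, since $\bfU^{(1)},\dots,\bfU^{(n)}$ are independent copies of the copula process $\bfU$ and $n_{j,M(n)}(c_n)=\sum_{i\in M(n)}1_{\set{\bfU^{(i)}\nleq(1-c_n/j)1_{[0,1]}}}$, each $n_{j,M(n)}(c_n)$ is $B(m_n,p_j(c_n))$-distributed; by expansion \eqref{eqn:expansion_of_df_of_copula_process} together with the $\delta$-neighborhood assumption — which, as recorded in Lemma~\ref{lem:crucial_approximation_function_space}, forces $r(s)=O(\abs s^\delta)$ — one has $p_j(c_n)=(c_n/j)\bigl(m_D+O(c_n^\delta)\bigr)$. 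Lindeberg's central limit theorem then yields
\[
\frac1{(m_nc_n)^{1/2}}\bigl(j\,n_{j,M(n)}(c_n)-m_nc_nm_D\bigr)\to_D N(0,j\,m_D),\qquad 1\le j\le k,
\]
where the deterministic centering $m_nc_nm_D$ is admissible precisely because $m_nc_n^{1+2\delta}\to0$ annihilates the $O(m_nc_n^{1+\delta})$ bias after division by $(m_nc_n)^{1/2}$. In particular $(m_nc_nk)^{-1}\sum_{j=1}^k j\,n_{j,M(n)}(c_n)\to m_D$ in probability, so the denominator of the associated statistic may be replaced by $m_nc_nm_D$.

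Next I would compute the limiting covariance exactly as in the proof of Theorem~\ref{thm:limit_distribution_of_T_n}. Since the sojourn time is monotone in the threshold, $1_{(0,1]}\bigl(S_{\bfU}(-c_n/i)\bigr)\,1_{(0,1]}\bigl(S_{\bfU}(-c_n/j)\bigr)=1_{(0,1]}\bigl(S_{\bfU}(-c_n/\max(i,j))\bigr)$, whence
\[
\frac{ij}{c_nm_D}\,P\Bigl(S_{\bfU}\bigl(-\tfrac{c_n}{\max(i,j)}\bigr)>0\Bigr)=\frac{ij}{\max(i,j)}\bigl(1+o(1)\bigr)\to\min(i,j).
\]
Hence the Cram\'{e}r-Wold device and Lindeberg's theorem give $\bfY_n\to_D N(\bfzero,\bfM_k\bfM_k^\T)$ for the normalized vector $\bfY_n=(Y_{n,j})_{j=1}^k$ with $Y_{n,j}=(m_nc_nm_D)^{-1/2}\bigl(j\,n_{j,M(n)}(c_n)-m_nc_nm_D\bigr)$ and $\bfM_k$ the lower-triangular all-ones matrix from that proof. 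Feeding this through the projection $\bfP_k=\bfI_k-k^{-1}\bfE_k$ and invoking the known spectral decomposition of $k\,\bfM_k^\T\bfP_k\bfM_k$ shows that the statistic built from the $n_{j,M(n)}(c_n)$ converges in distribution to $\sum_{i=1}^{k-1}\lambda_i\xi_i^2$.

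Finally I would transfer this to $\hat T_n(c_n)$ using Lemma~\ref{lem:crucial_approximation_function_space}: since $(m_nc_n)^{-1/2}\bigl(n_{j,M(n)}(c_n)-\hat n_{j,M(n)}(c_n)\bigr)\to0$ in probability for each $j$, the vector $\bigl((m_nc_nm_D)^{-1/2}(j\,\hat n_{j,M(n)}(c_n)-m_nc_nm_D)\bigr)_{j=1}^k$ has the same weak limit $N(\bfzero,\bfM_k\bfM_k^\T)$ as $\bfY_n$, and the denominator of $\hat T_n(c_n)$ still converges in probability to $m_nc_nm_D$ up to a factor $1+o_P(1)$; Slutsky's lemma and the continuous mapping theorem then give the assertion. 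The only genuine work is the first step — verifying the central limit theorem for the subsample counts with the claimed mean and covariance and checking that $m_nc_n^{1+2\delta}\to0$ kills the bias — but this is a routine adaptation of the proof of Theorem~\ref{thm:limit_distribution_of_T_n} with $n$ replaced by $m_n$, since the distributional input \eqref{eqn:expansion_of_df_of_copula_process} and the monotonicity of the sojourn time are precisely the functional analogues of the facts used there.
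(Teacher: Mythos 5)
Your proposal is correct and takes essentially the same route as the paper, which gives no written-out proof but simply states that the result follows from Lemma \ref{lem:crucial_approximation_function_space} together with the arguments in the proof of Theorem \ref{thm:limit_distribution_of_T_n}; your text is the natural fleshing-out of exactly that plan (binomial counts with $p_j(c_n)=(c_n/j)(m_D+O(c_n^\delta))$, bias killed by $m_nc_n^{1+2\delta}\to0$, covariance $\min(i,j)$ via monotonicity of the sojourn time, spectral decomposition of $k\,\bfM_k^\T\bfP_k\bfM_k$, and Slutsky to pass from $n_{j,M(n)}$ to $\hat n_{j,M(n)}$). No gaps.
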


\begin{exam}\upshape
Let $\eta_1,\eta_2$ be two independent and standard negative exponential
distributed rv. Put for $t\in[0,1]$
\begin{equation*}
  X_t
  :=\max\left(\frac {-V}{2\exp\bigl(\frac{\eta_1}{1-t}\bigr)}, \frac {-V}{2\exp\left(\frac{\eta_2}t\right)}\right)
  =\frac{-V}{2\exp\left(\max\left(\frac{\eta_1}{1-t},\frac{\eta_2}t\right)\right)},
\end{equation*}
where the rv $V$ is independent of $\eta_1,\eta_2$ and follows the df
$H_\lambda$ defined in Lemma \ref{lem:copula_not_in_domain_of_attraction}
with $\lambda\in\bigl[-\frac{\sqrt2}2,\frac{\sqrt2}2\bigr]$. Note that
\[
\max\left(\frac{\eta_1}{1-t},\frac{\eta_2}t\right)=_D\eta_1=_D\eta_2
\]
and, thus, the process $\bfX=(X_t)_{t\in[0,1]}$ has identical continuous
marginal df.

For $\lambda=0$, the process $\bfX$ is a \emph{generalized Pareto process},
whose pertaining copula process is in the max-domain of attraction of a SMSP,
see \citet{aulfaho11}. For $\lambda\not=0$ this is not true: Just consider
the bivariate rv $(X_0,X_1)=_D-\frac{V}{2}(1/U_1,1/U_2)$, where
$U_1=\exp(\eta_1)$, $U_2=\exp(\eta_2)$ and repeat the arguments in Lemma
\ref{lem:copula_not_in_domain_of_attraction}.
\end{exam}

\section{Testing via a grid of points} \label{sec:test_processes_grid}
Observing a complete process on $[0,1]$ as in the preceding section might be
a too restrictive assumption. Instead we will require in what follows that we
observe stochastic processes with sample paths in $C[0,1]$ only through an
increasing grid of points in $[0,1]$.

Let $\bfV\in C[0,1]$ be a GPCP with pertaining $D$-norm
\begin{equation*}
  \norm f_D =E\left(\sup_{t\in[0,1]}(\abs{f(t)}Z_t)\right),\qquad f\in E[0,1].
\end{equation*}
Choose a grid of points $0=t_1^{(d)}<t_2^{(d)}<\dots<t_d^{(d)}=1$. Then the rv
\begin{equation*}
  \bfV_d := \left(V_{t_1}^{(d)},\dots,V_{t_d}^{(d)}\right)^\intercal
\end{equation*}
follows a GPC, whose corresponding $D$-norm is given by
\begin{equation*}
  \norm{\bfx}_{D,d}:=E\left(\max_{1\le i\le d}\left(\abs{x_i}Z_{t_i}\right)\right),\qquad \bfx\in\R^d.
\end{equation*}

Let now $d=d_n$ depend on $n$. If we require that
\begin{equation*}
  \max_{1\le i\le d_n-1}\abs{t_{i+1}^{(d_n)} -  t_i^{(d_n)}}\to_{n\to\infty} 0,
\end{equation*}
then, by the continuity of $\bfZ=(Z_t)_{t\in[0,1]}$,
\begin{equation*}
  \max_{1\le i\le d_n-1}\abs{Z_{t_{i+1}^{(d_n)}} - Z_{t_i^{(d_n)}}}\to_{n\to\infty} 0,\quad
  \max_{1\le i\le d_n} Z_{t_i^{(d_n)}} \to_{n\to\infty} \sup_{t\in[0,1]} Z_t \qquad \mathrm{a.s.},
\end{equation*}
and, thus, the sequence of generator constants converges:
\begin{equation*}
  m_{D,d_n}
  := E\left(\max_{1\le i\le d_n} Z_{t_i^{(d_n)}}\right)
  \to_{n\to\infty} E\left(\sup_{t\in[0,1]}Z_t\right)
  = m_D.
\end{equation*}

\subsection{Observing copula data} \label{sec:test_GPCP_grid}

Suppose we are given $n$ independent copies of a copula process $\bfU$. The
projection of each process onto the grid
$0=t_1^{(d_n)}<t_2^{(d_n)}<\dots<t_{d_n}^{(d_n)}=1$ yields $n$ iid rv in
$\R^{d_n}$, which follow a copula. Let $n_j(c)$ as defined in
\eqref{eqn:definition_of_n_j(c)} be based on these rv. Note that $n_j(c)$
depends on $d_n$ as well. But in order not to overload our notation we
suppress the dependence on the dimension.

Moreover we require that
\begin{equation} \label{eqn:uniform_delta-neighborhood}
  P\left(\bfU\le 1_{[0,1]}+cf\right)
  = 1- c\norm f_D + O(c^{1+\delta})
\end{equation}
holds uniformly for all $f\in E^-[0,1]$ satisfying $\norm{f}_\infty\le1$.
Again a suitable version of the central limit theorem implies
\begin{equation*}
  \frac 1{(nc_n)^{1/2}} \bigl(j\,n_j(c_n) - n c_n m_{D,d_n}\bigr)
  \to_D N\left(0,jm_D\right)
\end{equation*}
and thus
\begin{equation*}
  \frac{j\,n_j(c_n)}{nc_n}
  \to_{n\to\infty}m_D\quad\text{in probability},\qquad 1\le j\le
  k,
\end{equation*}
yielding
\begin{equation*}
  \frac 1{nc_nk} \sum_{j=1}^k j\,n_j(c_n)
  \to_{n\to\infty} m_D \quad\text{in probability}.
\end{equation*}
Theorem \ref{thm:limit_distribution_of_T_n} now carries over:

\begin{theorem}\label{thm:asymptotic_distribution_of_T_n_for_gpcp}
Let $\bfU$ be a copula process satisfying
\eqref{eqn:uniform_delta-neighborhood}. Choose a grid of points
$0=t_1^{(d)}<t_2^{(d)}<\dots<t_d^{(d)}=1$ with $d=d_n\to\infty$ and
$\max_{1\le i\le d_n-1}\abs{t_{i+1}^{(d_n)} - t_i^{(d_n)}}\to 0$ as
$n\to\infty$. Let $T_n$ as defined in
\eqref{eqn:definition_of_test_statistic_T_n} be based on the projections of
$n$ independent copies of $\bfU$ onto this increasing grid of points. Let
$c=c_n$ satisfy $c_n\to 0$, $nc_n\to\infty$ and $nc_n^{1+2\delta}\to0$ as
$n\to\infty$. Then we obtain
\begin{equation*}
T_n(c_n)\to_D  \sum_{i=1}^{k-1} \lambda_i \xi_i^2,
\end{equation*}
with $\xi_i$ and $\lambda_i$ as in Theorem
\ref{thm:limit_distribution_of_T_n}.
\end{theorem}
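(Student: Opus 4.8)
The plan is to reduce Theorem \ref{thm:asymptotic_distribution_of_T_n_for_gpcp} to Theorem \ref{thm:limit_distribution_of_T_n} by controlling the discrepancy between the grid-based quantities $n_j(c_n)$ and their ``ideal'' continuous counterparts, i.e. the sojourn-time exceedance counts for the full copula process $\bfU$. Concretely, for $1\le j\le k$ let $n_j^\ast(c_n):=\sum_{i=1}^n 1_{(0,1]}\bigl(S_{\bfU^{(i)}}(-c_n/j)\bigr)$ be the functional exceedance count from Section \ref{sec:test_GPCP}, where $S_{\bfU}(s)=\int_0^1 1_{(s,\infty)}(U_t-1)\,dt$. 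First I would show that
\[
(nc_n)^{-1/2}\bigl(n_j(c_n)-n_j^\ast(c_n)\bigr)\to_{n\to\infty}0\quad\text{in probability},\qquad 1\le j\le k,
\]
since the two counts differ only on the event that a path of $\bfU^{(i)}$ exceeds $1-c_n/j$ at some grid point $t_\ell^{(d_n)}$ but the grid-restricted indicator and the integral sojourn indicator disagree --- and by continuity of the paths, as the mesh tends to $0$, exceeding $1-c_n/j$ at a grid point forces a positive sojourn time, so in fact $n_j(c_n)\le n_j^\ast(c_n)$ eventually; conversely one bounds $E(n_j^\ast(c_n)-n_j(c_n))$ using \eqref{eqn:uniform_delta-neighborhood} applied to the function $f\equiv -c_n/j$ and to the grid-restricted process, whose df is also of the form $1-c_n\norm{\cdot}_{D,d_n}+O(c_n^{1+\delta})$ with $m_{D,d_n}\to m_D$. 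The key point is that the ``extra'' probability is $P(\bfU_d\le(1-c_n/j)1_{[0,1]})-P(\bfU\le(1-c_n/j)1_{[0,1]})=c_n(m_D-m_{D,d_n})+O(c_n^{1+\delta})=o(c_n)$, so summed over $n$ terms and divided by $(nc_n)^{1/2}$ this is $o\bigl((nc_n)^{1/2}\bigr)\cdot o(1)=o_P(1)$.

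Having established this approximation, the proof proceeds exactly as for Theorem \ref{thm:limit_distribution_of_T_n}. The convergence statements displayed just before the theorem --- namely $(nc_n)^{-1/2}(j\,n_j(c_n)-nc_nm_{D,d_n})\to_D N(0,jm_D)$ via Lindeberg and hence $(nc_nk)^{-1}\sum_j j\,n_j(c_n)\to m_D$ in probability --- let us replace the denominator of $T_n(c_n)$ by $nc_nm_D$, so that $T_n(c_n)$ is asymptotically equivalent to $\bfY_n^\T(\bfI_k-k^{-1}\bfE_k)\bfY_n$ with $Y_{n,j}=(nc_nm_D)^{-1/2}(j\,n_j(c_n)-nc_nm_D)$. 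Using the approximation above together with $m_{D,d_n}\to m_D$, one gets $\bfY_n\to_D N(\bfzero,\bfSigma)$ with the same $\bfSigma=(\min(i,j))$ as in Theorem \ref{thm:limit_distribution_of_T_n}: the covariance computation is identical once one checks that $c_n^{-1}P(S_{\bfU}(-c_n/\max(i,j))>0)\to m_D$, which is \eqref{eqn:expansion_of_df_of_copula_process} together with the negligible grid correction. The eigenvalue/eigenvector computation for $k\bfM_k^\T(\bfI_k-k^{-1}\bfE_k)\bfM_k$ is then quoted verbatim, yielding $T_n(c_n)\to_D\sum_{i=1}^{k-1}\lambda_i\xi_i^2$.

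I expect the main obstacle to be the first step --- verifying that the grid discretization error is asymptotically negligible --- and within it, two subtleties deserve care. First, one must be sure that the hypothesis \eqref{eqn:uniform_delta-neighborhood} (uniformity over $\norm f_\infty\le 1$ with a single $O(c^{1+\delta})$) is genuinely used, because the bound on $E(n_j^\ast-n_j)$ requires expanding the df of the grid-restricted process $\bfU_{d_n}$ at the point $(1-c_n/j)1_{[0,1]}$ with a remainder that is $O(c_n^{1+\delta})$ \emph{uniformly in $n$}, i.e. uniformly in the dimension $d_n$; the uniform $\delta$-neighborhood condition delivers exactly this. Second, the condition $nc_n^{1+2\delta}\to0$ is what forces the remainder contribution $n\cdot O(c_n^{1+\delta})$, after division by $(nc_n)^{1/2}$, to vanish --- this is the same role it plays in Theorem \ref{thm:limit_distribution_of_T_n}, and (as in the remark there) it can be dropped when $\bfU$ is itself a GPCP since then the remainder is identically zero. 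The monotone-direction argument $n_j(c_n)\le n_j^\ast(c_n)$ via path continuity is the one genuinely ``functional'' ingredient and should be stated cleanly; everything else is a transcription of the finite-dimensional proof.
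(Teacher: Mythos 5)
Your overall architecture is reasonable, and your second paragraph contains most of the right ingredients, but the key reduction step has a genuine gap. You propose to show $(nc_n)^{-1/2}\bigl(n_j(c_n)-n_j^\ast(c_n)\bigr)\to 0$ in probability, where $n_j^\ast$ counts exceedances of the full paths. By path continuity indeed $n_j(c_n)\le n_j^\ast(c_n)$, and
\begin{equation*}
  E\bigl(n_j^\ast(c_n)-n_j(c_n)\bigr)
  = n\left(\frac{c_n}{j}\bigl(m_D-m_{D,d_n}\bigr)+O\bigl(c_n^{1+\delta}\bigr)\right).
\end{equation*}
Dividing by $(nc_n)^{1/2}$ gives $\frac{(nc_n)^{1/2}}{j}(m_D-m_{D,d_n})+O\bigl((nc_n^{1+2\delta})^{1/2}\bigr)$. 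The second term vanishes by hypothesis, but the first is a product of a sequence tending to infinity with one tending to zero, and the theorem imposes \emph{no rate} on the grid refinement: the mesh, and hence $m_D-m_{D,d_n}=E\bigl(\sup_{t}Z_t-\max_i Z_{t_i^{(d_n)}}\bigr)$, may shrink arbitrarily slowly relative to $(nc_n)^{-1/2}$. Since $n_j^\ast-n_j$ is a nonnegative sum of iid indicators, it concentrates around its mean, so whenever $(nc_n)^{1/2}(m_D-m_{D,d_n})\to\infty$ your approximation lemma actually fails. The slip is in the step ``$n\cdot o(c_n)$ divided by $(nc_n)^{1/2}$ is $o_P(1)$'': one has $n\cdot o(c_n)/(nc_n)^{1/2}=(nc_n)^{1/2}\cdot o(1)$, which is indeterminate.

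The paper avoids this entirely by never comparing with the functional counts. It works directly with the grid-projected copula in $\R^{d_n}$, uses \eqref{eqn:uniform_delta-neighborhood} (applied, e.g., to $f=-\frac1j\sum_{r} 1_{\set{t_r^{(d_n)}}}$, which satisfies $\norm{f}_\infty\le 1$ and $\norm f_D=\frac1j m_{D,d_n}$) to get $p_j(c_n)=\frac{c_n}{j}m_{D,d_n}+O(c_n^{1+\delta})$ with a remainder uniform in $n$, and centers the CLT at $nc_nm_{D,d_n}$: that is, $(nc_n)^{-1/2}\bigl(j\,n_j(c_n)-nc_nm_{D,d_n}\bigr)\to_D N(0,jm_D)$. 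The observation you are missing is that the quadratic form $\bfY_n^\T\bfP_k\bfY_n$ with $\bfP_k=\bfI_k-k^{-1}\bfE_k$ is invariant under adding a common constant to all coordinates of $\bfY_n$, so the choice of centering constant ($nc_nm_{D,d_n}$ versus $nc_nm_D$) is immaterial; the convergence $m_{D,d_n}\to m_D$ is needed only for the asymptotic variances and for the denominator of $T_n$, and no rate is required. If you drop the comparison with $n_j^\ast$ and consistently center at $m_{D,d_n}$, your argument closes without any unstated assumption linking $d_n$ to $(n,c_n)$.
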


\subsection{The case of an arbitrary process} \label{sec:test_an_arbitrary_process} Now we will extend Theorem
\ref{thm:asymptotic_distribution_of_T_n_for_gpcp} to a general process
$\bfX=(X_t)_{t\in[0,1]}\in C[0,1]$ with continuous marginal df $F_t$,
$t\in[0,1]$. We want to test whether the copula process
$\bfU:=\left(F_t(X_t)\right)_{t\in[0,1]}\in C[0,1]$ corresponding to $\bfX$
satisfies \eqref{eqn:uniform_delta-neighborhood}. As before this will be done
by projecting the process $\bfX$ onto a grid of points
$0=t_1^{(d)}<\dots<t_d^{(d)}=1$ with $d=d_n\to_{n\to\infty}\infty$ and and
$\max_{1\le i\le d_n-1}\abs{t_{i+1}^{(d_n)} - t_i^{(d_n)}}\to_{n\to\infty}
0$.

Let $\bfX^{(1)},\dots,\bfX^{(n)}$ be independent copies of $\bfX$ and
consider the $n$ iid rv of projections $\bfX_{d_n}^{(i)} :=
\left(X_{t_1^{(d_n)}}^{(i)},\dots,X_{t_{d_n}^{(d_n)}}^{(i)}\right)^\T$,
$i=1,\dots,n$. Fix $k\in\set{2,3,\dots}$, choose an arbitrary subset $M(n)$
of $\set{1,\dots,n}$ of size $\abs{M(n)}=m_n$ and put for $0<c<1$
\begin{align*}
  n_{j,M(n)}(c)
  &:= \sum_{i\in M(n)} 1_{(0,\infty)}\left(\sum_{r=1}^{d_n} 1_{(\gamma_{j,r}(c),\infty)}\left(X_{t_r^{(d_n)}}^{(i)}\right)\right) \\
  &\hphantom{:}= m_n - \sum_{i\in M(n)} 1_{(-\bfinfty,\bfgamma_j(c)]}\left(\bfX_{d_n}^{(i)}\right)
\end{align*}
which is the number of all rv $\bigl(\bfX_{d_n}^{(i)}\bigr)_{i\in M(n)}$
exceeding the vector
\begin{equation*}
  \bfgamma_j(c)
  := \left(\gamma_{j,1}(c), \dots, \gamma_{j,d_n}(c)\right)^\T
  := \left(F_{t_1^{(d_n)}}^{-1}\left(1-\frac{c}{j}\right),\dots,F_{t_{d_n}^{(d_n)}}^{-1}\left(1-\frac{c}{j}\right)\right)^\T
\end{equation*}
in at least one component. Clearly we have
\begin{equation*}
  n_{j,M(n)}(c)
  = m_n - \sum_{i\in M(n)} 1_{\left[\bfzero, \left(1-\frac{c}{j}\right)\bfone\right]}\left(\bfU_{d_n}^{(i)}\right)
\end{equation*}
almost surely where $\bfU_{d_n}^{(i)} :=
\left(U_{t_1^{(d_n)}}^{(i)},\dots,U_{t_{d_n}^{(d_n)}}^{(i)}\right)^\T$ and
$\bfU_{d_n}^{(i)}$ is the copula process of $\bfX_{d_n}^{(i)}$.

Again we replace $\bfgamma_j(c)$ with
\begin{equation*}
  \hat\bfgamma_j(c)
  := \left(\hat\gamma_{j,1}(c), \dots, \hat\gamma_{j,d_n}(c)\right)^\T
\end{equation*}
where $\hat\gamma_{j,r}(c) := \hat
F_{t_r^{(d_n)}}^{-1}\left(1-\frac{c}{j}\right)$ and $\hat F_{t_r^{(d_n)}}(x)
:= n^{-1}\sum_{i=1}^n 1_{(-\infty,x]}\left(X_{t_r^{(d_n)}}^{(i)}\right)$,
$1\le r\le d_n$, yielding an estimator of $n_j(c)$:
\begin{align*}
  \hat n_{j,M(n)}(c)
  &= \sum_{i\in M(n)} 1_{(0,\infty)}\left(\sum_{r=1}^d 1_{\left(\hat\gamma_{j,r}(c),\infty\right)}\left(X_{t_r^{(d_n)}}^{(i)}\right)\right) \\
  &= m_n - \sum_{i\in M(n)} 1_{(-\bfinfty,\hat\bfgamma_j(c)]}\left(\bfX_{d_n}^{(i)}\right).
\end{align*}
We have
\begin{equation*}
  \hat\gamma_{j,r}(c) = X_{\neins:n,r},
\end{equation*}
where $X_{1:n,r}\le X_{2:n,r}\le\dots\le X_{n:n,r}$ denote the ordered values
of $X_{t_r^{(d_n)}}^{(1)},\dots, X_{t_r^{(d_n)}}^{(n)}$ for each
$r=1,\dots,d_n$ and $\langle x\rangle=\min\set{k\in\N:\, k\ge x}$ is again
the right integer neighbor of $x>0$. Since transforming each
$X_{t_r^{(d_n)}}^{(i)}$ by its df $F_{t_r^{(d_n)}}$ does not alter the value
of $\hat n_j(c)$ with probability one, we obtain
\begin{equation*}
  \hat n_{j,M(n)}(c)
  = m_n - \sum_{i\in M(n)} 1_{\times_{r=1}^{d_n}\left[\vphantom{\frac{c}j}\smash{0,U_{\langle n(1-\frac{c}{j})\rangle:n,r}}\right]} \bigl(\bfU_{d_n}^{(i)}\bigr)
\end{equation*}
almost surely where $U_{1:n,r}\le U_{2:n,r}\le\dots\le U_{n:n,r}$ are the
order statistics of $U_{t_r^{(d_n)}}^{(1)},\dots, U_{t_r^{(d_n)}}^{(n)}$.
Since $\bfU_{d_n}^{(1)},\dots,\bfU_{d_n}^{(n)}$ are independent copies of the
rv $\bfU_{d_n} :=
\left(F_{t_1^{(d_n)}}(X_{t_1^{(d_n)}}),\dots,F_{t_{d_n}^{(d_n)}}(X_{t_{d_n}^{(d_n)}})\right)^\T$,
the distribution of $\left(\hat n_{1,M(n)}(c),\dots,\hat
n_{k,M(n)}(c)\right)^\T$ does not depend on the marginal df $F_{t_r^{(d)}}$.
The following auxiliary result is crucial.

\begin{lemma}\label{newlem:crucial_approximation_of_n_j}
Suppose that $m_n\to_{n\to\infty}\infty$. Let $c_n>0$ satisfy $c_n\to 0$,
$m_nc_n\to\infty$ and $m_n^2\log(m_n)c_n/n\to 0$ as $n\to \infty$. If
$d_n\to\infty$ satisfies $d_n^2/(m_nc_n)\to 0$ as $n\to\infty$, then we
obtain for $j=1,\dots, k$
\begin{equation*}
  (m_nc_n)^{-1/2} \left(n_{j,M(n)}(c_n)-\hat n_j(c_n)\right)\to_{n\to\infty} 0\quad\mathrm{ in\  probability}.
\end{equation*}
\end{lemma}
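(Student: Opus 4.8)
The plan is to mimic the proof of Lemma~\ref{lem:crucial_approximation_of_n_j} step by step, the only genuinely new feature being that the grid dimension $d_n$ is now allowed to grow, so that the crude union bound over the $d_n$ coordinates has to be shown to be harmless. Writing $U^{(i)}_r := U^{(i)}_{t_r^{(d_n)}}$ for brevity, I would first decompose, exactly as in that lemma,
\[
  n_{j,M(n)}(c_n) - \hat n_{j,M(n)}(c_n) = R_n - T_n ,
\]
where $R_n$ counts the copies $\bfU_{d_n}^{(i)}$, $i\in M(n)$, lying below the empirical threshold vector $\bigl(U_{\langle n(1-c_n/j)\rangle:n,r}\bigr)_{1\le r\le d_n}$ but not below $(1-c_n/j)\bfone$, and $T_n$ the analogous count in the opposite direction. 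A coordinatewise union bound gives
\[
  R_n \le \sum_{i\in M(n)}\sum_{r=1}^{d_n} 1_{\left(1-\frac{c_n}{j},\, U_{\langle n(1-\frac{c_n}{j})\rangle:n,r}\right]}\bigl(U^{(i)}_r\bigr),
\]
hence $E(R_n)\le m_n\sum_{r=1}^{d_n}P\bigl(1-\tfrac{c_n}{j}<U^{(1)}_r\le U_{\langle n(1-c_n/j)\rangle:n,r}\bigr)$, the slight dependence caused by the index $1$ appearing among the sample that defines the order statistic being absorbed exactly as in Lemma~\ref{lem:crucial_approximation_of_n_j}. The decisive simplification here, as compared with Lemma~\ref{lem:crucial_approximation_function_space}, is that it suffices to bound $E(R_n)$ and then invoke Markov's inequality, so that only the marginal (uniform) law of each $U^{(i)}_r$ and of each order statistic $U_{\langle\cdot\rangle:n,r}$ enters and the dependence of the copula process across the grid points is immaterial; in particular no $\delta$-neighborhood assumption is needed.

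Next I would split each summand at $\mu_n\pm\eps_n$, where $\mu_n:=\langle n(1-c_n/j)\rangle/(n+1)=E\bigl(U_{\langle n(1-c_n/j)\rangle:n,r}\bigr)$ and $\eps_n:=\delta_nc_n^{1/2}/m_n^{1/2}$ with $\delta_n:=3(m_n\log(m_n)/n)^{1/2}$, using
\[
  P\bigl(1-\tfrac{c_n}{j}<U^{(1)}_r\le U_{\langle n(1-c_n/j)\rangle:n,r}\bigr) \le P\bigl(1-\tfrac{c_n}{j}<U^{(1)}_r\le\mu_n+\eps_n\bigr)+P\bigl(U_{\langle n(1-c_n/j)\rangle:n,r}-\mu_n\ge\eps_n\bigr).
\]
Since $U^{(1)}_r$ is uniform and $\mu_n-(1-c_n/j)=O(1/n)$, while $1/n=o(\eps_n)$ because $nc_n\to\infty$, the first term is $O(\eps_n)$; for the second, one has $\sigma_n^2:=\mu_n(1-\mu_n)\asymp c_n$ and hence $\eps_n/\sigma_n^2\asymp(\log(m_n)/(nc_n))^{1/2}\to 0$ (this limit following from $m_nc_n\to\infty$ and $m_n^2\log(m_n)c_n/n\to 0$ together with $m_n\le n$), so that \citet[Lemma 3.1.1]{reiss89} yields, for all large $n$,
\[
  P\bigl(U_{\langle n(1-c_n/j)\rangle:n,r}-\mu_n\ge\eps_n\bigr) \le \exp\left(-\frac{(n/\sigma_n^2)\eps_n^2}{3\bigl(1+\eps_n/\sigma_n^2\bigr)}\right)\le m_n^{-2},
\]
because $(n/\sigma_n^2)\eps_n^2=9\log(m_n)c_n/\sigma_n^2\ge(9j-o(1))\log m_n$. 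Summing over $r=1,\dots,d_n$ and over $i\in M(n)$ gives $E(R_n)=O(m_nd_n\eps_n)+O(d_n/m_n)$, whence, inserting $\eps_n=3(\log(m_n)c_n/n)^{1/2}$,
\[
  \frac{E(R_n)}{(m_nc_n)^{1/2}} = O\!\left(\left(\frac{d_n^2}{m_nc_n}\cdot\frac{m_n^2\log(m_n)c_n}{n}\right)^{1/2}\right)+O\!\left(\frac{1}{m_n}\cdot\frac{d_n}{(m_nc_n)^{1/2}}\right)\to_{n\to\infty}0
\]
by the two rate hypotheses $m_n^2\log(m_n)c_n/n\to 0$ and $d_n^2/(m_nc_n)\to 0$. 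Markov's inequality then gives $(m_nc_n)^{-1/2}R_n\to 0$ in probability, and running the same estimate with the roles of $1-c_n/j$ and $U_{\langle\cdot\rangle:n,r}$ interchanged (splitting now at $\mu_n-\eps_n$) shows $(m_nc_n)^{-1/2}T_n\to 0$ in probability, which finishes the proof.

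The step I expect to be the crux is exactly this simultaneous bookkeeping of the growing dimension $d_n$: the factor $m_nd_n$ produced by the union bound must be absorbed on one side by the exponentially small tail $m_n^{-2}$ of the order statistic (for which boundedness of $d_n^2/(m_nc_n)$ and $m_n\to\infty$ suffice) and on the other side by the factor $(m_n\log(m_n)/n)^{1/2}$ contributed by $\eps_n$ (for which one needs precisely the product relation $(d_n^2/(m_nc_n))\cdot(m_n^2\log(m_n)c_n/n)\to 0$). Everything else is a routine transcription of the finite-dimensional argument, the dependence structure of the copula process across grid points playing no role since only first moments are estimated.
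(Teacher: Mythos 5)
Your proof is correct and follows essentially the same route as the paper's: the identical decomposition $R_n-T_n$, the coordinatewise union bound producing the factor $d_n$, the split at $\mu_n\pm\eps_n$, and the exponential order-statistic bound from \citet[Lemma 3.1.1]{reiss89}, with the growth conditions $d_n^2/(m_nc_n)\to0$ and $m_n^2\log(m_n)c_n/n\to0$ entering exactly where you say they do. The only (immaterial) difference is your choice of tuning sequence $\eps_n=3(\log(m_n)c_n/n)^{1/2}$, carried over from Lemma \ref{lem:crucial_approximation_of_n_j}, whereas the paper takes $\eps_n=\delta_nc_n^{1/2}/m_n^{1/2}$ with $\delta_n=(m_nc_n)^{-1/2}$; both choices balance the two error terms under the stated hypotheses.
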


\begin{proof}
We have almost surely
\begin{align*}
  &n_{j,M(n)}(c_n)-\hat n_{j,M(n)}(c_n)\\
  &= \sum_{i\in M(n)}
     \left(1_{\times_{r=1}^{d_n}\left[\vphantom{\frac{c_n}{j}}\smash{0,U_{\langle n(1-\frac{c_n}{j})\rangle:n,r}}\right]} \bigl(\bfU_{d_n}^{(i)}\bigr)
           - 1_{\left[\bfzero, \left(1-\frac{c_n}{j}\right)\bfone\right]}\bigl(\bfU_{d_n}^{(i)}\bigr) \right) \\
  &= \sum_{i\in M(n)} 1_{\times_{r=1}^{d_n}\left[\vphantom{\frac{c_n}{j}}\smash{0,U_{\langle n(1-\frac{c_n}{j})\rangle:n,r}}\right]} \bigl(\bfU_{d_n}^{(i)}\bigr) \left(1 - 1_{\left[\bfzero, \left(1-\frac{c_n}{j}\right)\bfone\right]}\bigl(\bfU_{d_n}^{(i)}\bigr) \right) \\
  &\mathrel{\hphantom{=}}{} - \sum_{i\in M(n)} 1_{\left[\bfzero, \left(1-\frac{c_n}{j}\right)\bfone\right]}\bigl(\bfU_{d_n}^{(i)}\bigr) \left(1 - 1_{\times_{r=1}^d[0,U_{\langle n(1-\frac{c_n}{j})\rangle:n,r}]} \bigl(\bfU_{d_n}^{(i)}\bigr)\right) \\
 &=: R_n-T_n.
\end{align*}
In what follows we show
\begin{equation*}
  \frac 1{(m_nc_n)^{1/2}} \sum_{r=1}^{d_n} E\left(  \sum_{i\in M(n)} 1_{\left(\vphantom{\frac{c_n}{j}}\smash{1-\frac{c_n}{j},U_{\langle n(1-\frac{c_n}{j})\rangle:n,r}}\right]}\left(U_{t_r^{(d_n)}}^{(i)}\right)\right)
  = o(1)
\end{equation*}
and thus $(m_nc_n)^{-1/2}R_n=o_P(1)$; note that
\begin{align*}
  R_n\le \sum_{i\in M(n)}\sum_{r=1}^{d_n} 1_{\left(\vphantom{\frac{c_n}{j}}\smash{1-\frac{c_n}{j},U_{\langle n(1-\frac{c_n}{j})\rangle:n,r}}\right]}\left(U_{t_r^{(d_n)}}^{(i)}\right).
\end{align*}

Put $\eps_n:=\delta_n c_n^{1/2}/m_n^{1/2}$ with
$\delta_n:=(m_nc_n)^{-1/2}\to_{n\to\infty}0$. We have with $\mu_n:=
\left.E\left(U_{\neinsn:n,r}^{(1)}\right)=\neinsn\right/(n+1)$
\begin{align*}
&\frac{m_n^{1/2}}{c_n^{1/2}} P\left( 1-\frac{c_n}{j}< U_{t_r^{(d_n)}}^{(1)} \le  U_{\neinsn:n,r}\right)\\
&\le \frac{m_n^{1/2}}{c_n^{1/2}} P\left(1-\frac{c_n}{j} < U_{t_r^{(d_n)}}^{(1)}\le \mu_n+\eps_n\right)+ \frac{m_n^{1/2}}{c_n^{1/2}} P\left(U_{\neinsn:n,r}- \mu_n\ge \eps_n\right)
\end{align*}
where the first term is of order
$O\left((nc_n)^{-1/2}+\delta_n\right)=O((m_nc_n)^{-1/2})$; recall that
$U_{t_r^{(d_n)}}^{(1)}$ is uniformly distributed on $(0,1)$. It, therefore,
suffices to show that the second term is of order $o(d_n^{-1})$ as well.

As in the proof of Lemma \ref{lem:crucial_approximation_of_n_j} we obtain for
$\sigma_n^2:=\mu_n(1-\mu_n)$
\begin{equation*}
  P\left(U_{\neinsn:n,r}- \mu_n\ge \eps_n\right)\le \exp\left(- \frac{\frac n{\sigma_n^2} \eps_n^2}{3\left(1+\frac{\eps_n}{\sigma_n^2}\right)}\right)
\end{equation*}
and $\eps_n/\sigma_n^2 = o(1)$. We have, moreover,
\begin{align*}
  \frac{n}{\sigma_n^2}\eps_n^2 &= \delta_n^2 n \frac{c_n}{m_n} \frac{n+1}{\neinsn} \frac{n+1}{n+1-\neinsn}\\
  &\ge \delta_n^2  \frac n{m_n} \frac{n+1}{\neinsn} \frac {nc_n}{1+\frac{nc_n}{j}} \\
  &\ge \frac14 \delta_n^2 \frac n{m_n}
\end{align*}
for large $n$ and, since $\delta_n^2 n/(m_n\log(m_n))\to\infty$ as
$n\to\infty$,
\begin{align*}
  &\frac{m_n^{1/2}}{c_n^{1/2}}\sum_{r=1}^{d_n} P\left(U_{\neinsn:n,r}- \mu_n\ge \eps_n\right) \\
  &\le \frac {d_n}{(m_n c_n)^{1/2}} \exp\left(-\frac1{16} \delta_n^2\frac n{m_n} + \log(m_n)\right)\\
  &= \frac {d_n}{(m_n c_n)^{1/2}} \exp\left(- \frac n{m_n}\delta_n^2 \left(\frac1{16} - \frac{m_n\log(m_n)}{n\delta_n^2}\right)\right)\\
  &=o(1).
\end{align*}

By repeating the above arguments one shows that
$(m_nc_n)^{-1/2}T_n=o_P(1)$ as well, which completes the proof of
Lemma \ref{newlem:crucial_approximation_of_n_j}.
\end{proof}

Now we consider the modified test statistic
\begin{equation*}
  \hat T_n(c):=\frac{\sum_{j=1}^k\left(j\,\hat n_{j,M(n)}(c) - \frac 1k\sum_{\ell=1}^k \ell\,\hat n_{\ell,M(n)}(c)\right)^2}{\frac 1k\sum_{\ell=1}^k \ell\,\hat n_{\ell,M(n)}(c)}
\end{equation*}
which does not depend on the marginal df $F_t$, $t\in[0,1]$, of the process
$\bfX$. The following result is a consequence of the arguments in the proof
of Theorem \ref{thm:limit_distribution_of_T_n} and Lemma
\ref{newlem:crucial_approximation_of_n_j}.

\begin{theorem}\label{newthm:limit_distribution_of_test_statistic_general}
Suppose that the process $\bfX=(X_t)_{t\in[0,1]}\in C[0,1]$ has continuous
marginal df $F_t$, $t\in [0,1]$ and that the pertaining copula process
$\bfU=(F_t(X_t))_{t\in[0,1]}$ satisfies
\eqref{eqn:uniform_delta-neighborhood}. Choose a grid of points
$0=t_1^{(d_n)}<t_2^{(d_n)}<\dots<t_d^{(d_n)}=1$ with $d_n\to\infty$ and
$\max_{1\le i\le d_n-1}\abs{t_{i+1}^{(d_n)} - t_i^{(d_n)}}\to 0$ as
$n\to\infty$. Let $m_n=\abs{M(n)}$ satisfy $m_n\to_{n\to\infty}\infty$ and
let $c=c_n$ satisfy $c_n\to 0$, $m_nc_n\to\infty$, $m_nc_n^{1+\delta}\to0$
and $m_n^2\log(m_n)c_n/n\to 0$ as $n\to\infty$. Then we obtain
\[
\hat T_n(c_n)\to_D \sum_{i=1}^{k-1} \lambda_i\xi_i^2
\]
with $\xi_i$ and $\lambda_i$ as in Theorem
\ref{thm:limit_distribution_of_T_n}.
\end{theorem}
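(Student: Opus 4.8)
The strategy is to invoke Lemma~\ref{newlem:crucial_approximation_of_n_j} to replace the data-dependent counts $\hat n_{j,M(n)}(c_n)$ by the counts $n_{j,M(n)}(c_n)$, and then to copy the proof of Theorem~\ref{thm:limit_distribution_of_T_n} almost verbatim, the only genuinely new point being a careful handling of the grid.

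First I would record that each $n_{j,M(n)}(c_n)$ is $B(m_n,p_j(c_n))$-distributed, where $p_j(c_n):=P\big(\bfU_{d_n}\nleq(1-c_n/j)\bfone\big)=P\big(\max_{1\le r\le d_n}U_{t_r^{(d_n)}}>1-c_n/j\big)$. Applying the uniform $\delta$-neighborhood condition \eqref{eqn:uniform_delta-neighborhood} to the step function that equals $-1/j$ on the grid --- equivalently, using that $\bfU_{d_n}$ lies in the $\delta$-neighborhood of the GPC $\bfV_{d_n}$ --- gives $p_j(c_n)=\frac{c_n}{j}m_{D,d_n}+O(c_n^{1+\delta})$ with a remainder uniform in $n$, where $m_{D,d_n}=E\big(\max_{1\le r\le d_n}Z_{t_r^{(d_n)}}\big)\in[1,\infty)$ is the generator constant of $\bfV_{d_n}$. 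Since $m_{D,d_n}$ depends on $n$, I would centre at $m_nc_nm_{D,d_n}$; the binomial bias then contributes only $(m_nc_nm_{D,d_n})^{-1/2}\cdot O(m_nc_n^{1+\delta})=O\big((m_nc_n^{1+2\delta})^{1/2}\big)$ to the standardized vector, which vanishes because $m_nc_n^{1+2\delta}=(m_nc_n^{1+\delta})\,c_n^\delta\to0$.

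Next, with $Y_{n,j}:=(m_nc_nm_{D,d_n})^{-1/2}\big(j\,n_{j,M(n)}(c_n)-m_nc_nm_{D,d_n}\big)$, Lindeberg's central limit theorem together with the Cram\'{e}r--Wold device yield $\bfY_n=(Y_{n,1},\dots,Y_{n,k})^\T\to_D N(\bfzero,\bfSigma)$ with $\sigma_{ij}=\min(i,j)$: the diagonal follows from $\operatorname{Var}\big(j\,n_{j,M(n)}(c_n)\big)\sim j\,m_nc_nm_{D,d_n}$, and for the off-diagonal one uses that the exceedance events are nested, so that for $i\le j$ the joint exceedance probability equals $p_j(c_n)=p_{\max(i,j)}(c_n)$, giving $\operatorname{Cov}\big(i\,n_{i,M(n)}(c_n),j\,n_{j,M(n)}(c_n)\big)\sim\frac{ij}{\max(i,j)}\,m_nc_nm_{D,d_n}=\min(i,j)\,m_nc_nm_{D,d_n}$. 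A law of large numbers shows $\frac1{m_nc_nm_{D,d_n}}\cdot\frac1k\sum_{\ell=1}^k\ell\,n_{\ell,M(n)}(c_n)\to_{n\to\infty}1$ in probability, so the statistic $T_n^{*}(c_n)$ formed from the $n_{j,M(n)}(c_n)$ is asymptotically equivalent to $\bfY_n^\T\big(\bfI_k-\frac1k\bfE_k\big)\bfY_n$. Writing $\bfSigma=\bfM_k\bfM_k^\T$ as in the proof of Theorem~\ref{thm:limit_distribution_of_T_n}, hence $\bfY_n\to_D\bfM_k\bfxi$ for a $k$-dimensional standard normal $\bfxi$, the limit is $\bfxi^\T\bfM_k^\T\big(\bfI_k-\frac1k\bfE_k\big)\bfM_k\bfxi=\sum_{i=1}^{k-1}\lambda_i\xi_i^2$ by the same eigenvalue computation.

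Finally, Lemma~\ref{newlem:crucial_approximation_of_n_j} gives $(m_nc_n)^{-1/2}\big(\hat n_{j,M(n)}(c_n)-n_{j,M(n)}(c_n)\big)\to_{n\to\infty}0$ in probability for $j=1,\dots,k$; dividing by $m_{D,d_n}^{1/2}\ge1$ this says precisely that $\hat Y_{n,j}-Y_{n,j}\to_{n\to\infty}0$ in probability for the analogously defined $\hat Y_{n,j}$, and the normalized denominators of $\hat T_n(c_n)$ and $T_n^{*}(c_n)$ differ by a term that is $o(1)$ in probability as well. Slutsky's theorem then transfers the limit to $\hat T_n(c_n)$. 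I expect the main obstacle to lie in the first step: one must check that the remainder in $p_j(c_n)=\frac{c_n}{j}m_{D,d_n}+O(c_n^{1+\delta})$ really is uniform in the dimension $d_n$ --- which is exactly why the uniform version \eqref{eqn:uniform_delta-neighborhood} of the $\delta$-neighborhood condition is imposed --- and to carry the $n$-dependent centering constant $m_{D,d_n}$ cleanly through all estimates, which is harmless only because $\hat T_n$ is self-normalizing.
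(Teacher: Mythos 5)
Your proposal is correct and follows exactly the route the paper intends: the paper gives no written proof of this theorem beyond the remark that it is a consequence of Lemma \ref{newlem:crucial_approximation_of_n_j} and the arguments proving Theorem \ref{thm:limit_distribution_of_T_n}, and your write-up fills in precisely those steps, including the centering at $m_nc_nm_{D,d_n}$ with $m_{D,d_n}\to m_D$ as in Section \ref{sec:test_GPCP_grid}. The only point worth flagging is that invoking Lemma \ref{newlem:crucial_approximation_of_n_j} also requires $d_n^2/(m_nc_n)\to 0$, a condition on the grid that the theorem statement omits; this is an imprecision inherited from the paper rather than a gap in your argument.
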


\section{Simulations} \label{sec:simulations}
In this section we provide some simulations, which indicate the performance
of the test statistic $T_n(c_n)$ from Theorem
\ref{thm:limit_distribution_of_T_n}. All computations were performed using
the \textsf{R} package \textsf{CompQuadForm} written by Pierre Lafaye de
Micheaux and Pierre Duchesne. We chose \citeauthor{imhof61}'s
\citeyearpar{imhof61} method for computing the $p$-values of our test
statistics; cf. \citet{duchdemi10} for an overview of simulation techniques
of quadratic forms in normal variables.

Therefore we chose $n=10\,000$, $c_n=c>0$ and $k=2$. We generated $1000$
independent realizations of $T_n(c_n)$ that we denote by $T_n^{(i)}(c_n)$ and
we computed the asymptotic $p$-values
$p_i:=1-F_k\left(T_n^{(i)}(c_n)\right)$, $i=1,\dots,1000$, where $F_k$ is the
df of $\sum_{i=1}^{k-1} \lambda_i\xi_i^2$ in Theorem
\ref{thm:limit_distribution_of_T_n}.

The values $p_i$ are filed in increasing order, $p_{(1)}\le \dots\le
p_{(1000)}$, and we plot the points
\begin{equation*}
\left(\frac j{1001},p_{(j)}\right),\qquad 1\le j\le 1000.
\end{equation*}
This quantile plot is a discrete approximation of the quantile function of
the $p$-value of $T_n(c_n)$, which visualizes the performance of the test
statistic $T_n(c_n)$.

If the underlying copula is in a $\delta$-neighborhood of a GPC, then the
points $(j/1001, p_{(j)})$, $j=1,\dots,1000$, should approximately lie along
the line $(x,x)$, $x\in[0,1]$, whereas otherwise $p_{(j)}$ should be
significantly smaller than $5\%$ for many $j$.

\begin{figure}[h]
\begin{multicols}{2}[GPC]
  \includegraphics[width=6.2 cm, trim=30 29 30 55, clip]{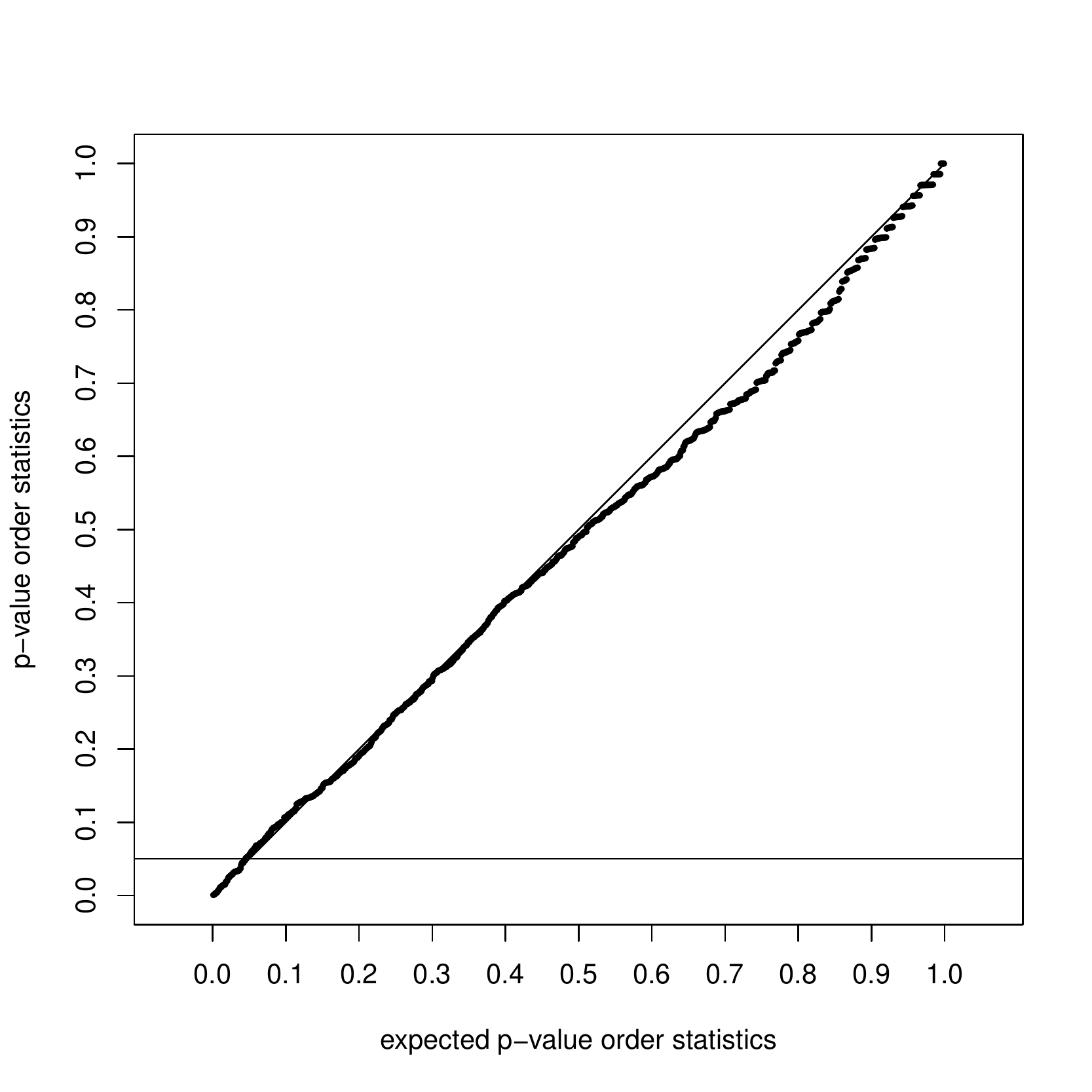}
  \caption{$c=0.2$.}

  \includegraphics[width=6.2 cm, trim=30 29 30 55, clip]{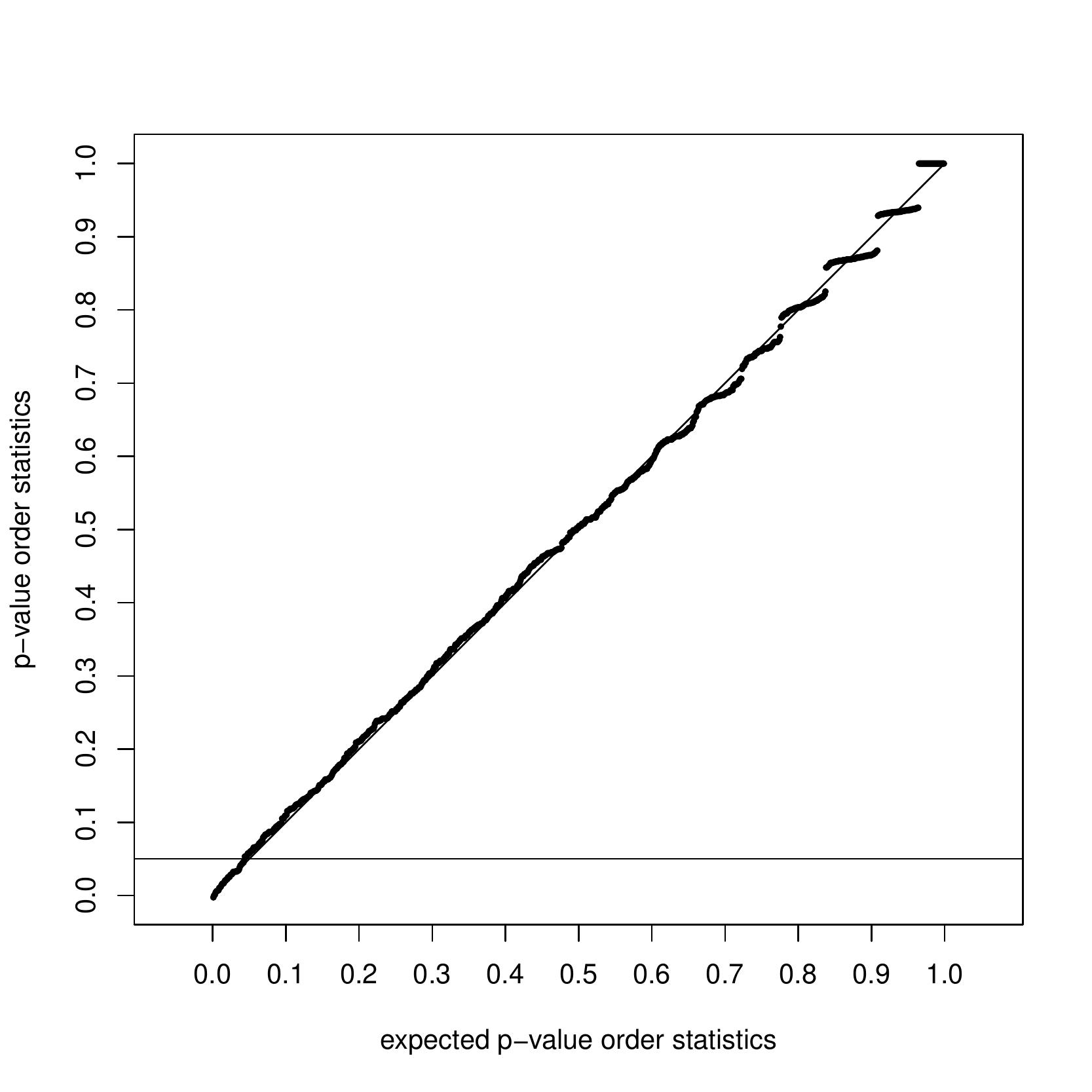}
  \caption{$c=0.01$.}
\end{multicols}
\end{figure}

\begin{figure}[h]
\begin{multicols}{2}[Copula not in $\mathcal D(G)$]
  \includegraphics[width=6.2 cm, trim=30 30 30 55, clip]{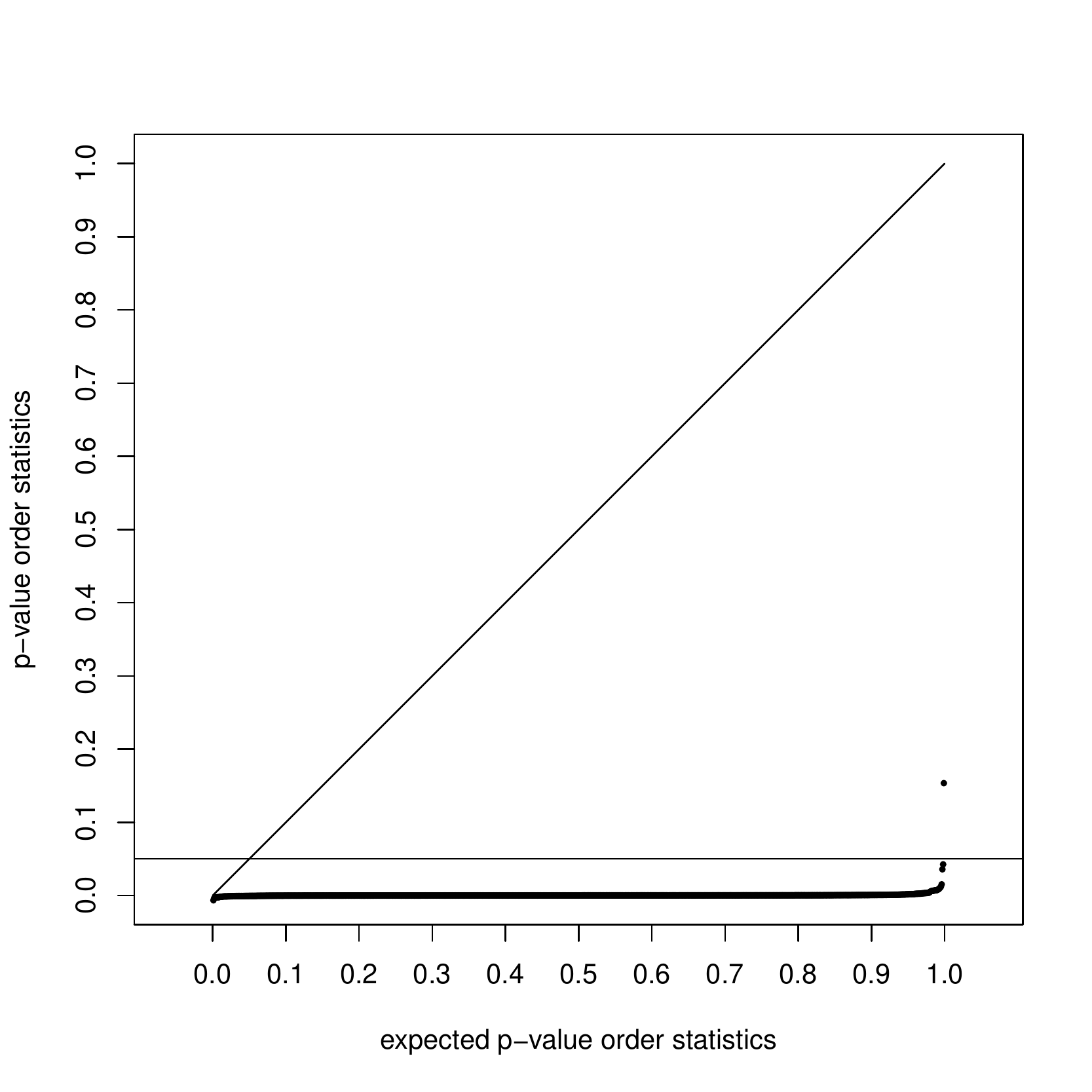}
  \caption{$c=0.2$.} \label{fig:copula_not_in_doa_0_2}

  \includegraphics[width=6.2 cm, trim=30 30 30 55, clip]{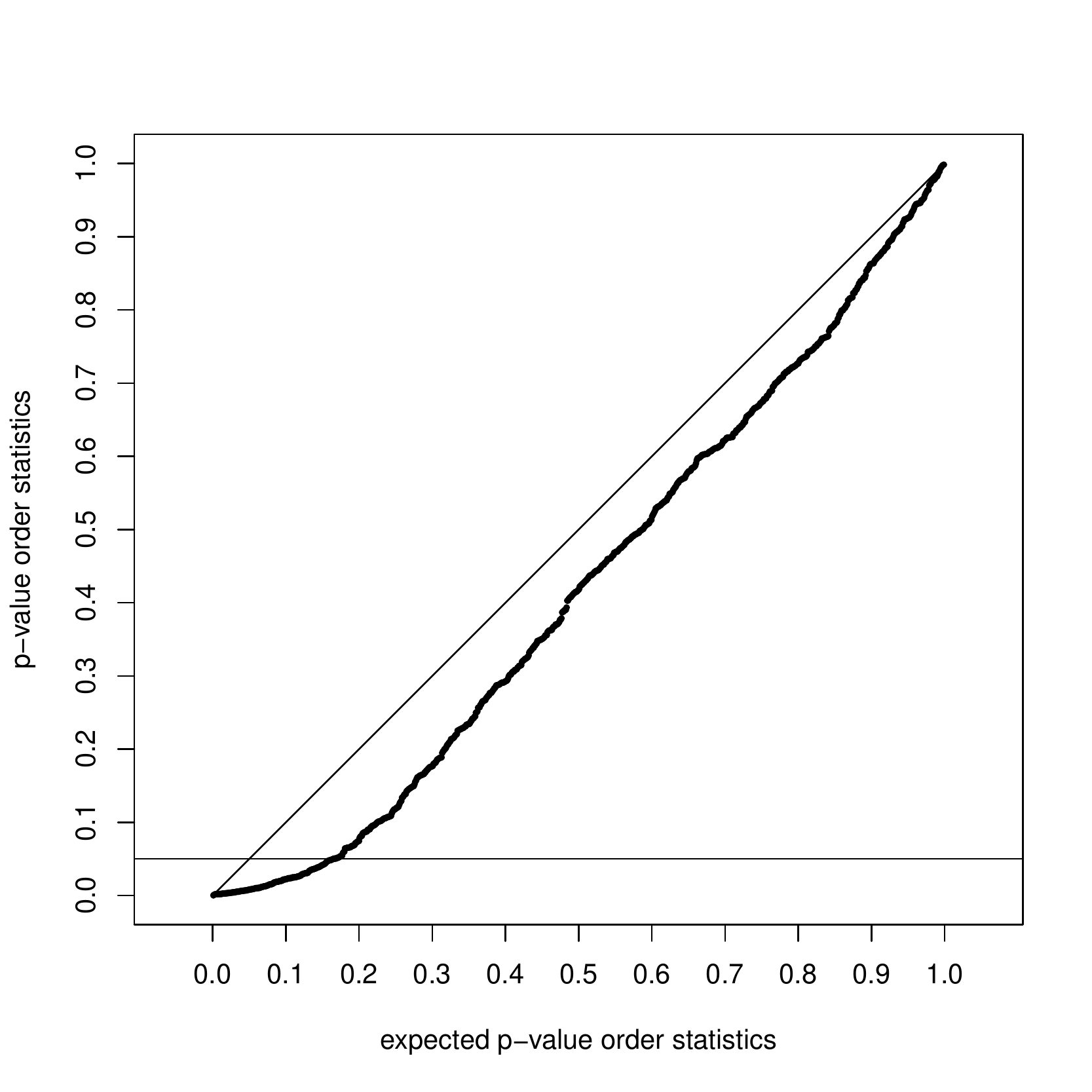}
  \caption{$c=0.01$.} \label{fig:copula_not_in_doa_0_01}
\end{multicols}
\end{figure}

\begin{figure}[h]
\begin{multicols}{2}[Normal Copula with coefficient of correlation $-0.5$]
  \includegraphics[width=6.2 cm, trim=30 29 30 55, clip]{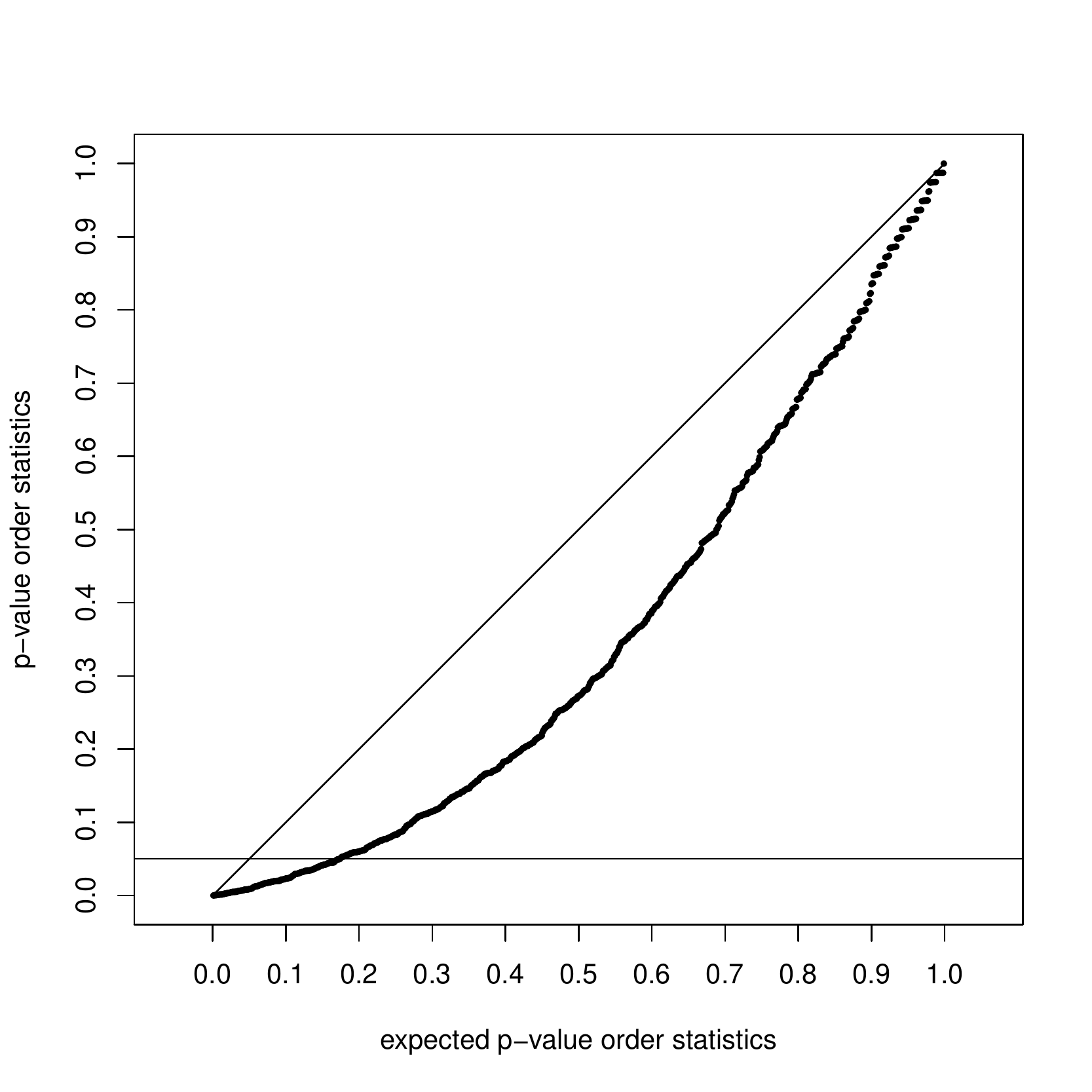}
  \caption{$c=0.2$.} \label{fig:normal_copula_0_2}

  \includegraphics[width=6.2 cm, trim=30 29 30 55, clip]{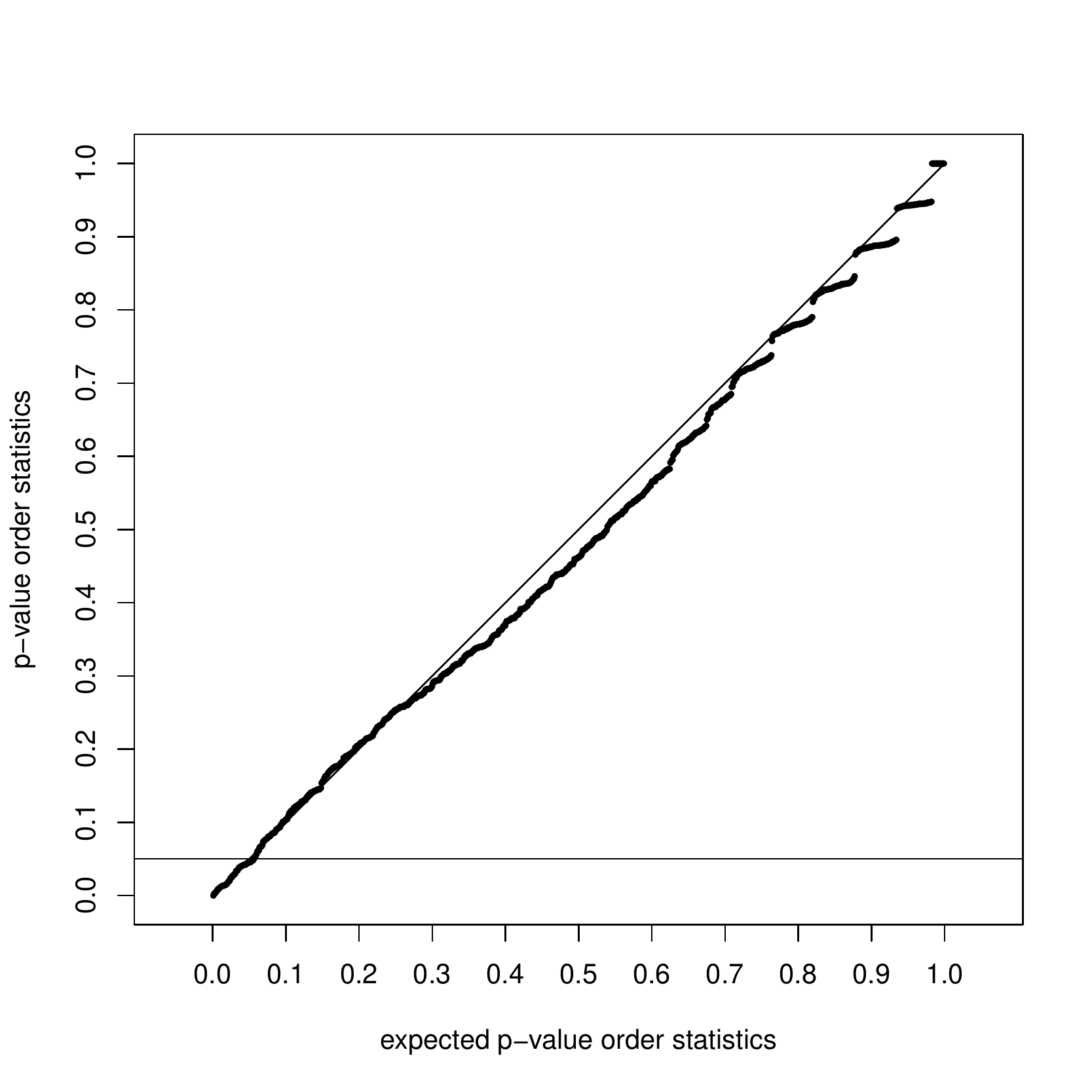}
  \caption{$c=0.01$.}
\end{multicols}
\end{figure}

\begin{figure}[h]
\begin{multicols}{2}[Clayton Copula with parameter $0.5$]
  \includegraphics[width=6.2 cm, trim=30 29 30 55, clip]{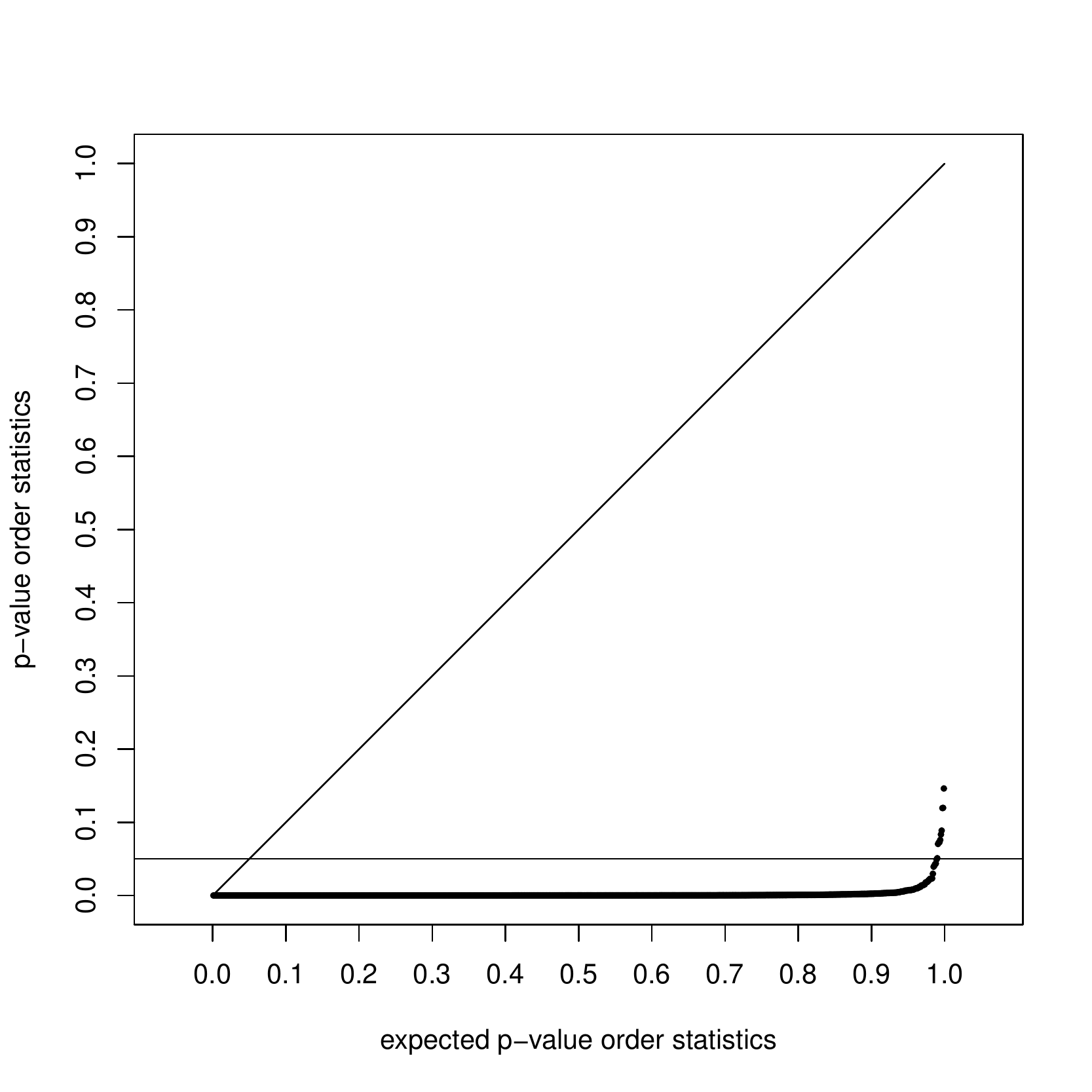}
  \caption{$c=0.2$.} \label{fig:Clayton_copula_0_2}

  \includegraphics[width=6.2 cm, trim=30 29 30 55, clip]{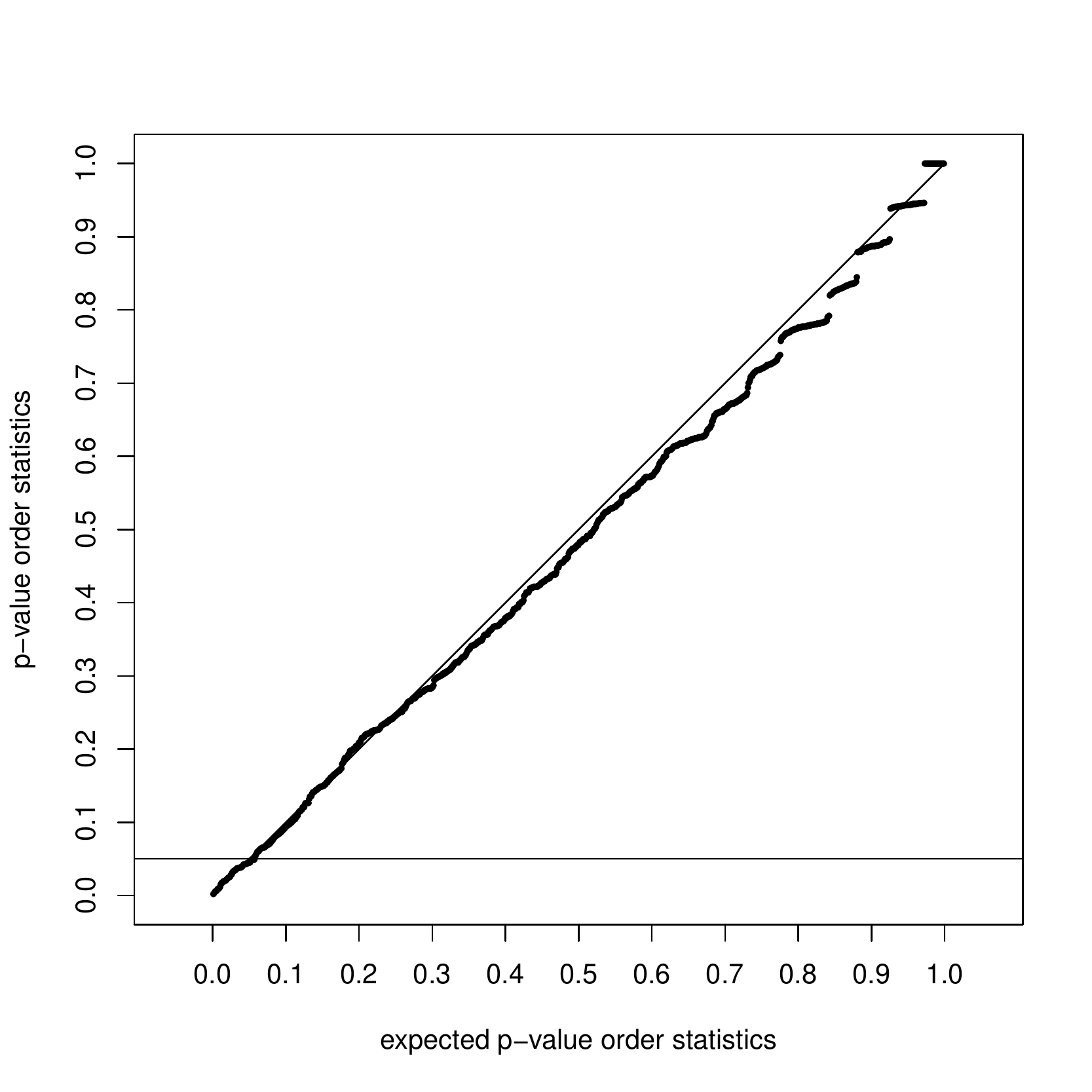}
  \caption{$c=0.01$.}
\end{multicols}
\end{figure}

As can be seen in figures 1--8, the test is quite reliable in detecting a GPC
itself. However, if the underlying copula is \emph{not} a GPC, the
corresponding $p$-value is quite sensitive to the selection of $c$. E.g., if
we decrease the value of $c$ from $0.2$ to $0.01$, a copula that is not even
in the domain of attraction of an extreme value distribution cannot be
detected anymore, cf. Figure \ref{fig:copula_not_in_doa_0_2} and Figure
\ref{fig:copula_not_in_doa_0_01}. On the other hand, there are copulas
satisfying the $\delta$-neighborhood condition that perform well with
$c=0.2$, such as the normal copula in Figure \ref{fig:normal_copula_0_2}, and
those that do not, such as the Clayton copula in Figure
\ref{fig:Clayton_copula_0_2}.

The aforementioned disadvantages can, however, be overcome by considering the
$p$-value as a function of the threshold $c$. Therefore we simulated a single
data set of sample size $n=10\,000$ and plotted the $p$-value for each $c$ of
a some grid $0<c_1<\dots<c_q<1$, see figures 9--12. It turns out that the
$p$-value curve of the considered GPC is above the $5\%$-line for
$c\in(0,0.6)$, roughly. In contrast, the copula in Figure
\ref{fig:copula_not_in_doa} has a peak for intermediate values of $c$ but,
for shrinking $c$, decreases again below the $5\%$-line. Finally, copulas in
the $\delta$-neighborhood of a GPC behave similar to the GPC in Figure
\ref{fig:GPC} except that the point of intersection with the $5\%$-line is
notably smaller than $0.6$.

\begin{figure}
\begin{multicols}{2}
  \includegraphics[width=6.2 cm, trim=30 29 30 55, clip]{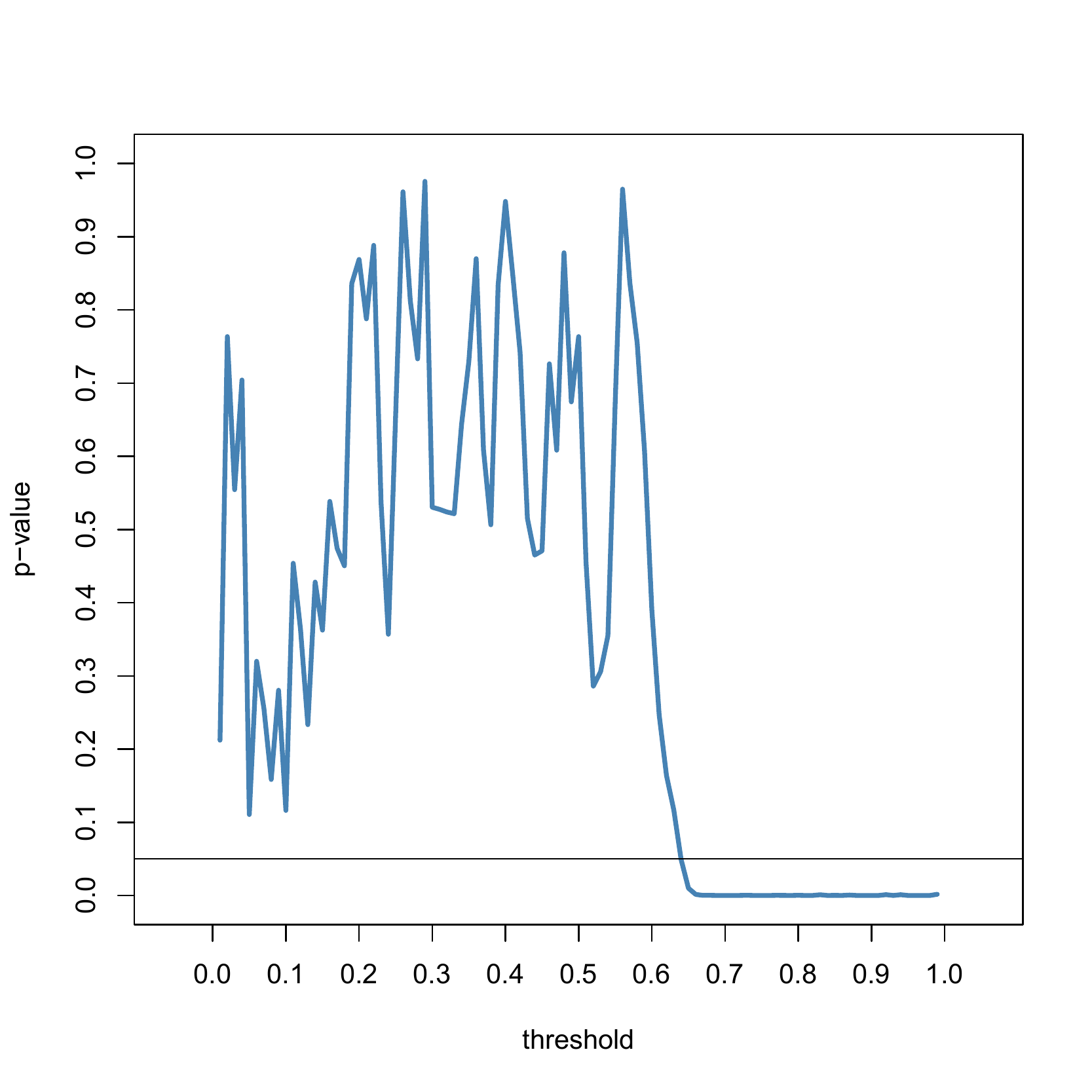}
  \caption{GPC} \label{fig:GPC}

  \includegraphics[width=6.2 cm, trim=30 29 30 55, clip]{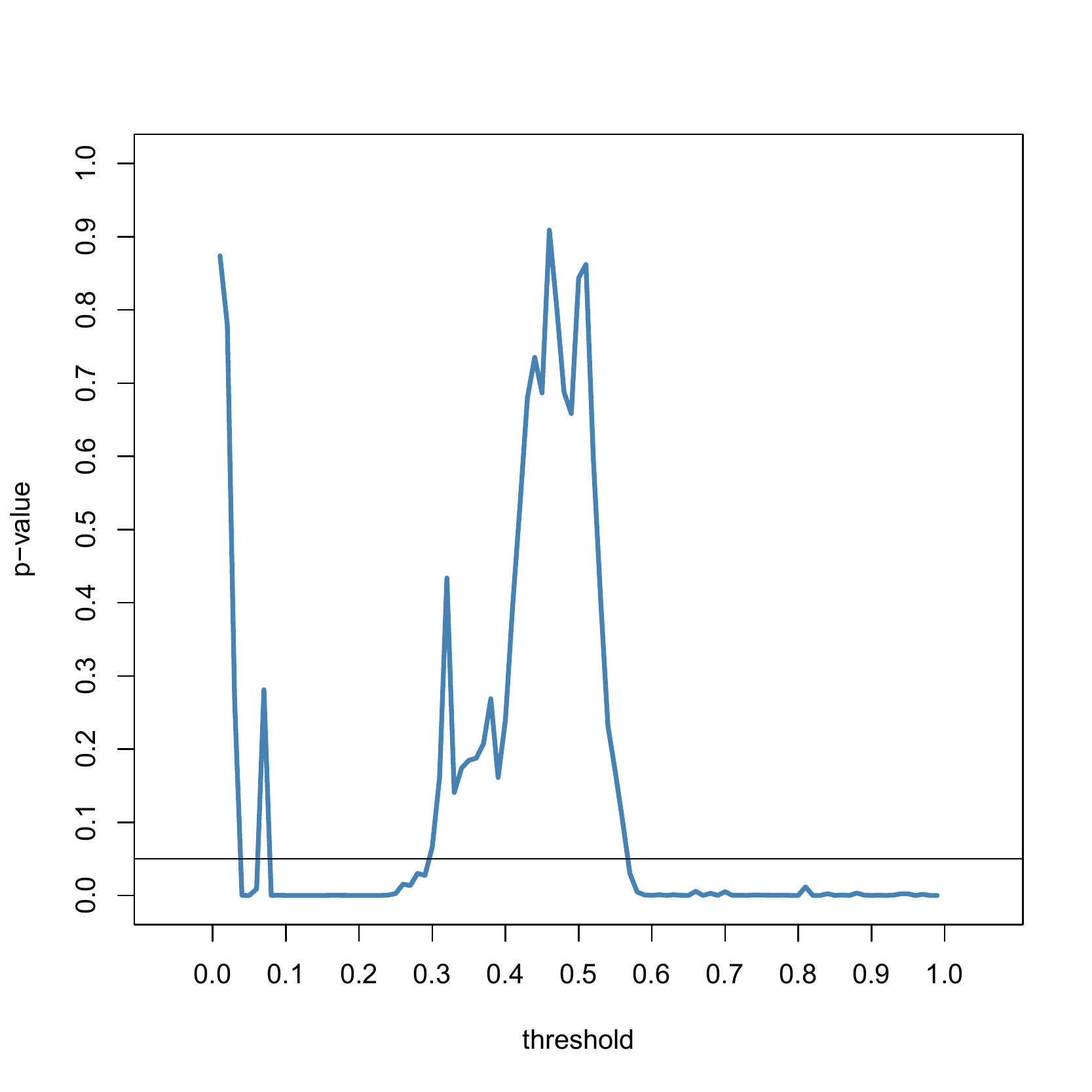}
  \caption{Copula $\notin\mathcal D(G)$} \label{fig:copula_not_in_doa}
\end{multicols}
\end{figure}

\begin{figure}
\begin{multicols}{2}
  \includegraphics[width=6.2 cm, trim=30 29 30 55, clip]{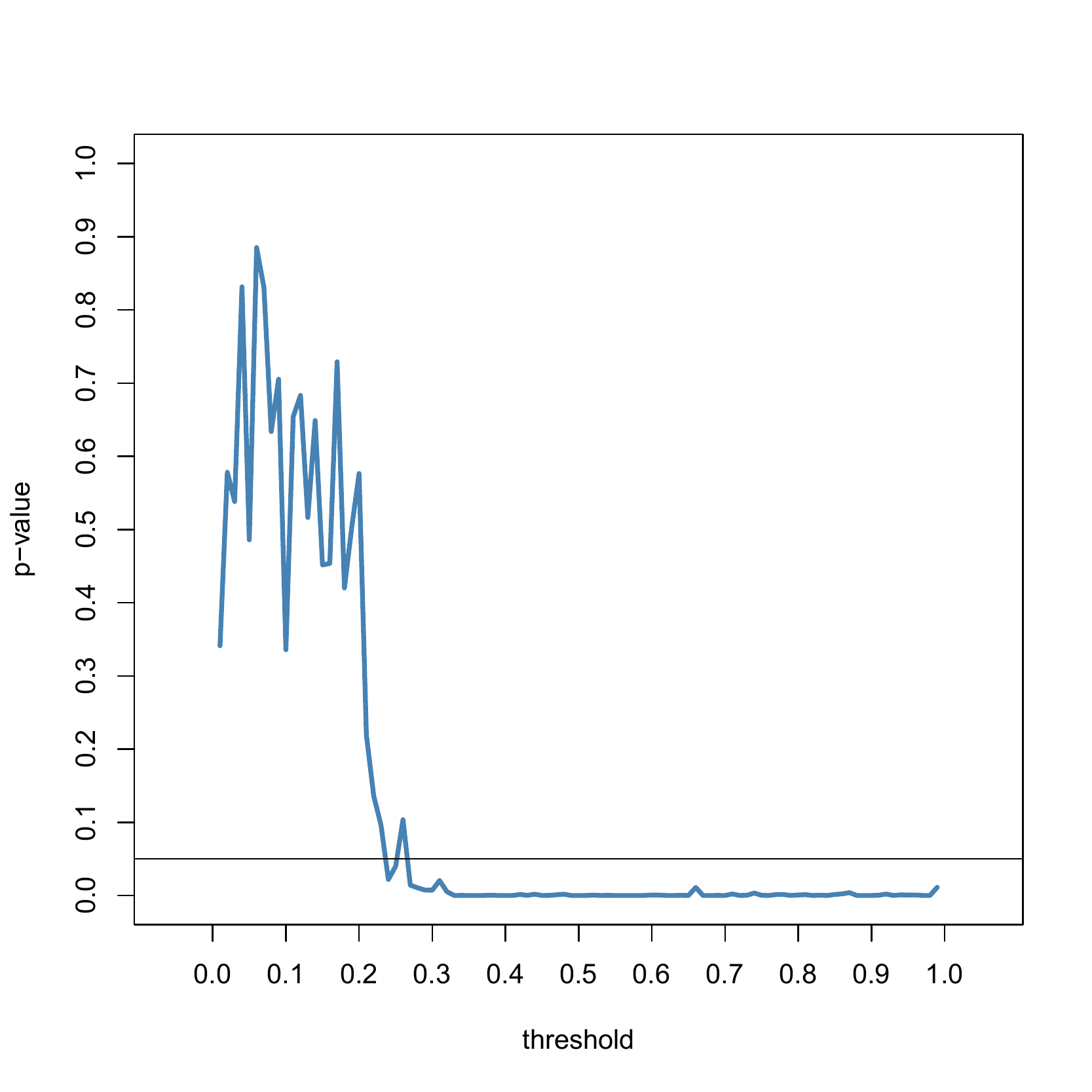}
  \caption{Normal copula}

  \includegraphics[width=6.2 cm, trim=30 29 30 55, clip]{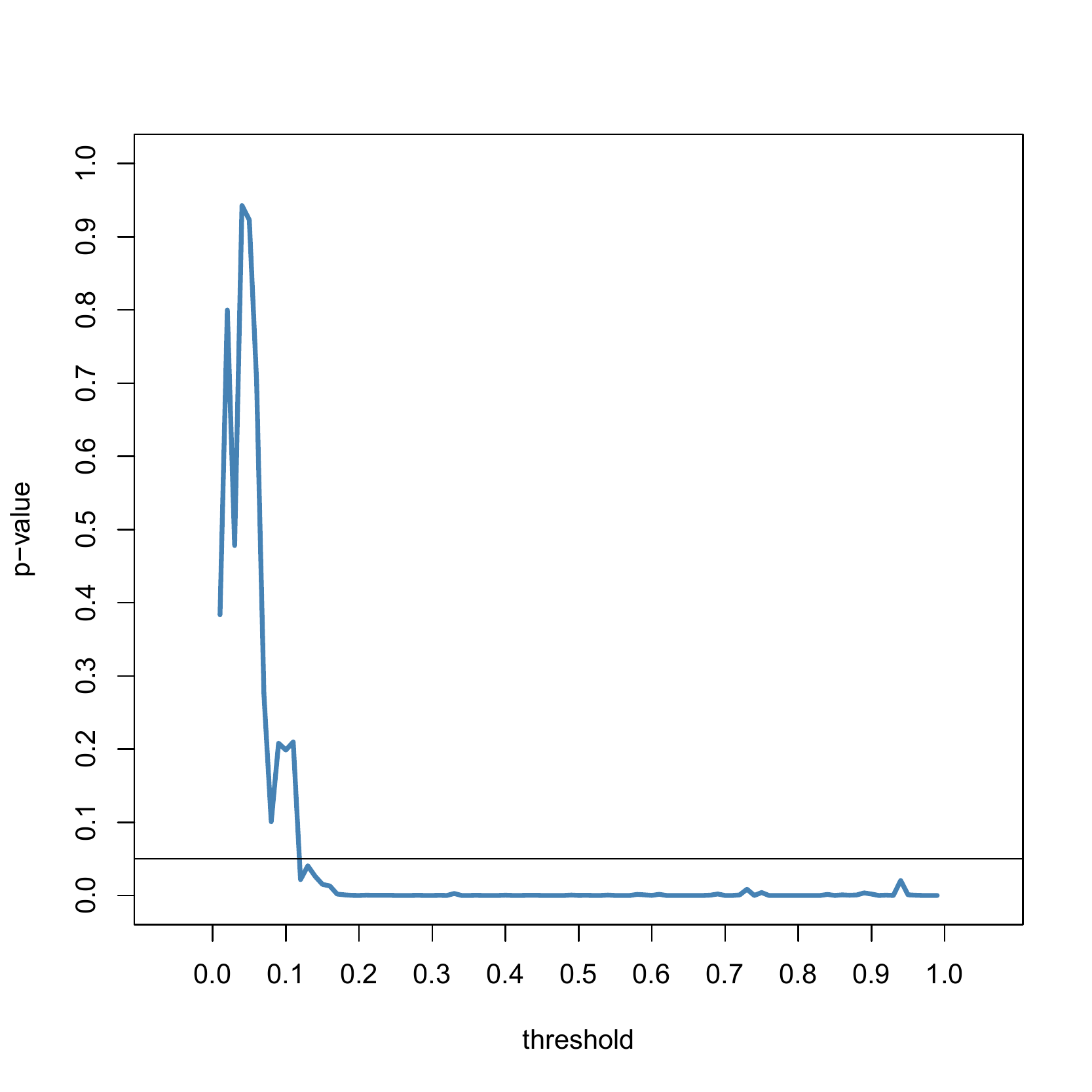}
  \caption{Clayton copula}
\end{multicols}
\end{figure}

The shapes of the $p$-value plots in figures 9--12 seem to be a reliable tool
for the decision whether or not to reject the hypothesis. A great advantage
of this approach is that a practitioner does not need to specify a suitable
value of the threshold $c$, which is a rather complicated task, but can make
the decision based on a highly intelligible graphical tool. A further
analysis of these kind of $p$-value plots is part of future work.

\section*{Acknowledgements}
The authors are grateful to Kilani Ghoudi for his hint to compute the
asymptotic distribution of the above test statistics using
\citeauthor{imhof61}'s \citeyearpar{imhof61} method.

\end{document}